\theoremstyle{plain}
\newtheorem{prop}{Proposition}[section]
\newtheorem{theorem}[prop]{Theorem}
\newtheorem{lemma}[prop]{Lemma}
\theoremstyle{definition}
\newtheorem{definition}[prop]{Definition}
\theoremstyle{remark}
\newtheorem{remark}[prop]{Remark}
\numberwithin{equation}{section}
\title{\textbf{The Einstein condition on nearly K\"ahler six-manifolds}}
\author{Giovanni Russo}
\date{}
\DeclareFontFamily{U}{bigeuf}{}
\DeclareFontShape{U}{bigeuf}{m}{n}{<-6>s*[1.5]eufm5
<6-8>s*[1.5]eufm7
<8->s*[1.5]eufm10}{}
\DeclareSymbolFont{bigeufletters}{U}{bigeuf}{m}{n}
\DeclareMathSymbol{\cyclicsum}{\mathop}{bigeufletters}{`S}
\newcommand{\Ric}{\mathrm{Ric}}
\newcommand{\Ricc}{\mathrm{Ric}^*}
\DeclareMathOperator{\id}{Id}
\begin{document}

\maketitle

\begin{abstract}
We review basic facts on the structure of nearly K\"ahler manifolds, focussing in particular on the six-dimensional case.
A self-contained proof that nearly K\"ahler six-manifolds are Einstein is given by combining different known results.
We finally rephrase the definition of nearly K\"ahler six-manifold in terms of a pair of partial differential equations.
\end{abstract}

\textit{Keywords}: nearly K\"ahler, six-manifold, Einstein condition.

\tableofcontents

\section{Introduction}
\label{introduction}

An almost Hermitian geometry is a triple $(M,g,J)$, where $M$ is a $2n$-dimensional manifold equipped with a
Riemannian metric $g$ and an orthogonal almost complex structure $J$. 
Denote by $\nabla$ the Levi-Civita connection on $M$.
Lowering the upper index of $J$ yields the fundamental two-form
$\sigma \coloneqq g(J{}\cdot{},{}\cdot{})$. 
Each tangent space is then a $\mathrm{U}(n)$-module isomorphic to a copy of $\mathbb{C}^n$ with its standard $\mathrm{U}(n)$-structure. 
In a paper published in 1980, Gray and Hervella \cite{gray_hervella} showed how to classify such geometries.
Take a Euclidean, $2n$-dimensional vector space $(V,g_0)$ equipped with an orthogonal complex structure $J_0$. 
The triple $(V,g_0,J_0)$ models each tangent space of $M$.
Define $\mathcal{W}$ as the vector space of the type $(3,0)$-tensors on $V$ satisfying the same symmetries of $\nabla \sigma$. 
Using the notation as in Salamon's book \cite[Chapter 3]{salamon},
\begin{equation*}
\mathcal{W} \coloneqq \Lambda^1 \otimes [\![\Lambda^{2,0}]\!],
\end{equation*}
where $[\![\Lambda^{2,0}]\!]$ is the eigenspace of $J_0$ in $\Lambda^2V^*$ associated with the eigenvalue~$-1$ and $\Lambda^1$ stands for $V^*$. 
In general the space $\mathcal{W}$ splits under the action of the unitary group $\mathrm{U}(n)$ into the orthogonal
direct sum of four irreducible submodules:
\begin{equation*}
\mathcal{W} = \mathcal{W}_1 \oplus \mathcal{W}_2 \oplus \mathcal{W}_3 \oplus \mathcal{W}_4.
\end{equation*} 
Consequently, $\nabla \sigma \in \mathcal{W}$ may be decomposed accordingly. 
Different combinations of its components determine sixteen classes of geometries.
A trivial example are K\"ahler manifolds, which are obtained when $\nabla \sigma = 0$, or equivalently $\nabla J =0$.

In this work we focus on $\mathcal{W}_1$, the class of \emph{nearly K\"ahler} manifolds. 
Their formal definition was given by Gray in the 1970s.

\begin{definition}[Gray \cite{gray1976}]
\label{nk_definition}
Let $(M,g,J)$ be an almost Hermitian manifold with Riemannian metric $g$ and almost complex structure $J$ compatible with $g$. 
Let $\nabla$ denote the Levi-Civita connection on $M$. 
Then $M$ is called \textit{nearly K\"ahler} if $(\nabla_X J)X = 0$ for every vector field $X$ on $M$.
\end{definition}
In the literature one often finds the expression \emph{strict} nearly K\"ahler for nearly K\"ahler manifolds that are not K\"ahler, namely $\nabla J \not \equiv 0$. 
When we write ``nearly K\"ahler'' we mean in fact ``strict nearly K\"ahler'', so as to simplify the terminology. Our manifolds will be always assumed to be connected.

An important ingredient in the general structure theory is the six-dimensional case, as shown by Nagy \cite[Theorem 1.1]{nagy}: 
any complete nearly K\"ahler manifold is locally a Riemannian product of homogenous nearly K\"ahler spaces, twistor spaces over quaternionic K\"ahler manifolds, and six-dimensional nearly K\"ahler manifolds.
On the other hand the classification by Gray and Hervella tells us $\mathcal{W}_1$ is trivial
in dimension two and four (see also the note by Gray \cite[Lemma 3]{gray1969}). Therefore we focus on the case where $M$ has dimension six. 
The lack of explicit six-dimensional, compact examples and the outstanding difficulty in finding new ones has made this geometry particularly exotic and appealing. 
There are only four homogeneous, compact examples (see \cite{frolicher}, \cite{fukami_ishihara} on the homogeneous nearly K\"ahler structure on the six-sphere, and \cite{gray_wolf}, \cite{butruille} for a classification of the homogeneous examples).
In 2017, Foscolo and Haskins \cite{foscolo_haskins} proved the existence of the first non-homogeneous nearly K\"ahler structures in dimension six. Progress in the theory of nearly K\"ahler six-manifolds with two-torus symmetry was made quite recently in~\cite{russo_swann}, where a new explicit, non-compact example is also given.

Certainly interest in nearly K\"ahler structures stems from other facts as well, e.g.\ links with $\mathrm{G}_2$ and spin geometry (see \cite{bryant}, \cite{friedrich_grunewald}, \cite{bar}). One may refer to \cite{phd_thesis} for a comprehensive survey. Here we concentrate on
\begin{theorem}
\label{einstein_condition}
Nearly K\"ahler six-manifolds are Einstein with positive scalar curvature.
\end{theorem}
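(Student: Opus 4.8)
The plan is to combine the algebraic rigidity of $\mathrm{SU}(3)$-structures in real dimension six with the classical curvature identities for nearly K\"ahler manifolds. Write $\sigma = g(J\cdot,\cdot)$ and let $\psi$ be the tensor $\psi(X,Y,Z) \coloneqq g\big((\nabla_X J)Y,Z\big)$. The first task is to record the pointwise symmetries of $\psi$. Polarising the identity $(\nabla_X J)X = 0$ of Definition~\ref{nk_definition} gives $(\nabla_X J)Y = -(\nabla_Y J)X$, differentiating $J^2 = -\id$ gives $(\nabla_X J)\circ J = -J\circ(\nabla_X J)$, and skew-symmetry of each $\nabla_X J$ follows from $\nabla g = 0$; together these show that $\psi$ is a $3$-form, $J$-anti-invariant, i.e.\ a section of the module $\mathcal{W}_1 \cong [\![\Lambda^{3,0}]\!]$ in the Gray--Hervella splitting. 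Here dimension six is decisive: $[\![\Lambda^{3,0}]\!]$ is then a real two-dimensional space, and every nonzero element of it has stabiliser $\mathrm{SU}(3)$ inside $\mathrm{SO}(6)$, so (wherever it is nonzero) $\psi$ encodes an $\mathrm{SU}(3)$-reduction.

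Next I would introduce the first canonical Hermitian connection $\bar\nabla = \nabla - \tfrac12 J(\nabla J)$. By Kirichenko's theorem $\bar\nabla$ is metric, satisfies $\bar\nabla J = 0$, has totally skew-symmetric torsion proportional to $\psi$, and, most importantly, $\bar\nabla(\nabla J) = 0$ (equivalently, $\nabla^2 J$ is algebraic in $\nabla J$). Since $\bar\nabla g = 0$ as well, the function $\|\nabla J\|^2$ is $\bar\nabla$-parallel, hence constant on the connected manifold $M$, and strictly positive because $M$ is strict; a fortiori $\psi$ vanishes nowhere and the $\mathrm{SU}(3)$-reduction is global. Furthermore the symmetric bilinear form
\begin{equation*}
B(X,Y) \;\coloneqq\; \sum_i g\big((\nabla_{e_i}J)X,\,(\nabla_{e_i}J)Y\big) \;=\; \sum_{i,j}\psi(e_i,X,e_j)\,\psi(e_i,Y,e_j)
\end{equation*}
is constructed from $\psi$ and $g$ alone, hence is invariant under $\Stab(\psi) = \mathrm{SU}(3)$; as $\mathbb{R}^6$ is an irreducible real $\mathrm{SU}(3)$-module, Schur's lemma forces $B = \tfrac16\|\nabla J\|^2\,g$, a constant positive multiple of $g$. (This is Gray's ``constant type'' condition, which in dimension six is automatic.)

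It then remains to link $B$ with the Ricci tensor, and for this I would deploy Gray's curvature identities for nearly K\"ahler manifolds --- obtained by differentiating the expression for $\nabla^2 J$ once more and tracing. They give the relation $\Ric - \Ricc = \tfrac14 B$ between the Ricci and $\Hodge$-Ricci tensors, together with a second contracted identity from which $\Ricc$ itself is seen to be a symmetric $2$-tensor built only from $\psi$ and $g$. A second application of Schur's lemma then makes every term a constant multiple of $g$, so $\Ric = \lambda g$ with $\lambda$ a positive constant proportional to $\|\nabla J\|^2$; tracing yields $\mathrm{scal} = 6\lambda > 0$, the positivity coming precisely from strictness.

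The hard part is this last step. One has to set up Gray's curvature identities with the correct signs and normalising constants and --- the real issue --- verify that the contraction of the full Riemann tensor defining $\Ric$ genuinely collapses onto terms quadratic in $\psi$, so that the $\mathrm{SU}(3)$-equivariance argument applies to it and not merely to $B$; this is the nearly-K\"ahler, torsion-ful analogue of ``holonomy $\mathrm{SU}(3)$ forces Ricci-flatness''. By contrast the representation theory of the first two steps and the constancy statements are routine once $\bar\nabla$ is in hand. I would note in a remark that a much shorter argument is available through spin geometry: a nearly K\"ahler six-manifold admits a real Killing spinor $\eta$ (\cite{friedrich_grunewald}, \cite{bar}), and Clifford-contracting the ensuing integrability condition --- which forces $R(X,Y)$ to act on $\eta$ by a fixed multiple of $X\cdot Y - Y\cdot X$ --- gives $\Ric = c\,g$ with $c > 0$ at once; but constructing the Killing spinor is of comparable difficulty, so I would retain the curvature-identity proof as the main line.
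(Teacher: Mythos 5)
Your architecture is essentially the paper's: first-order symmetries of $\nabla\sigma$, the canonical Hermitian connection $\widehat{\nabla}=\nabla-\tfrac12 J(\nabla J)$, global constant type, and then a curvature identity forcing $\Ric$ to be a multiple of $g$. Two of your packaging choices are genuinely different and worth noting. First, you obtain the constancy of $\lVert\nabla J\rVert$ from Kirichenko's theorem $\widehat{\nabla}(\nabla J)=0$, whereas the paper derives it by showing $\nabla(\Ric-\Ricc)=0$ via formula \eqref{nabla_curvature}; your route is cleaner, but in a self-contained treatment you would still have to prove $\widehat{\nabla}(\nabla J)=0$, and that proof (Proposition \ref{su3_connection}) rests on the very identities \eqref{inner_products} and \eqref{cyclic_sum_sigma} you would be trying to bypass, so nothing is actually saved. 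Second, your use of Schur's lemma on the irreducible real $\mathrm{SU}(3)$-module $\mathbb{R}^6$ to get $B=c\,g$ replaces the paper's explicit unitary-frame computation leading to \eqref{diff_ricci}; that is fine, though note the normalisation: by \eqref{difference_ricci} one has $\Ric-\Ricc=B$ exactly, not $\tfrac14 B$.

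The genuine gap is the step you yourself flag as ``the hard part''. All the first-order information (the identities obtained from $\nabla^2\sigma$ and the \emph{first} Bianchi identity) determines only the \emph{difference} $\Ric-\Ricc$ as an algebraic expression in $\psi$ and $g$; it says nothing about $\Ric$ or $\Ricc$ separately. Your assertion that ``a second contracted identity'' shows $\Ricc$ itself is built only from $\psi$ and $g$ is precisely the content of the theorem, and it does not follow from anything you have established: a priori $\Ricc$ involves the full Riemann tensor, which is not $\mathrm{SU}(3)$-algebraic in the torsion, so Schur's lemma has nothing to act on. The missing ingredient is a \emph{second}-Bianchi-identity computation: one differentiates the $\mathrm{U}(3)$-invariant curvature $\widehat{R}$, uses $\sum_i\widehat{R}(W,X,E_i,(\nabla_VJ)E_i)=0$ (a consequence of $\widehat{R}\in\Lambda^2\otimes[\Lambda^{1,1}]$), and after contracting with \eqref{another_formula_ricci} and the cyclic sum \eqref{cyclic_sum_last_result} arrives at
\begin{equation*}
\sum_{i,j} g((\Ric-\Ricc)E_i,E_j)\bigl(R(W,E_i,E_j,X)-5R(W,E_i,JE_j,JX)\bigr)=0,
\end{equation*}
which together with $\Ric-\Ricc=4\mu^2\id$ yields $\Ric=5\Ricc$ and hence $\Ric_g=5\mu^2 g$. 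This is Proposition \ref{main_result_for_einstein}, and it is where all of the difficulty of Gray's theorem lives; as written, your proposal describes the strategy for it but supplies no argument, so the Einstein condition is not actually proved. The remark about Killing spinors is accurate, but, as you say, it trades one hard computation for another.
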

This is a deep result proved first by Gray in 1976 \cite[Theorem 5.2]{gray1976} and then studied again by Carri\'on \cite{carrion}, Morris \cite{morris}. Friedrich and Grunewald \cite{friedrich_grunewald}, \cite{grunewald} proved that on nearly K\"ahler six-manifolds there exists a Killing spinor, which implies Theorem \ref{einstein_condition}. In \cite{carrion} it is shown that in dimension six there is an equivalence between Definition \ref{nk_definition} and a system of PDEs in terms of an $\mathrm{SU}(3)$-structure, whence 
\begin{theorem}
\label{theorem_carrion}
Let $(M,g,J)$ be an almost Hermitian six-manifold and $\sigma$ be the fundamental two-form on $M$. Then $M$ is nearly K\"ahler if and only if there exist a constant function $\mu$ on $M$ and a complex
$(3,0)$-form $\psi_{\mathbb{C}} = \psi_++i\psi_-$ such that 
\begin{equation}
\label{nk_structure_equations}
d\sigma = 3\mu \psi_+, \qquad d\psi_- = -2\mu \sigma \wedge \sigma.
\end{equation}
\end{theorem}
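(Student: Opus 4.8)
\emph{Strategy and the first equation.} The plan is to route the pointwise identity $(\nabla_XJ)X=0$ through the canonical $\mathrm{SU}(3)$-structure carried by a strict nearly K\"ahler six-manifold, reading the equations \eqref{nk_structure_equations} off the exterior derivatives of its defining forms, and to run the same dictionary backwards for the converse (this is essentially the route of Carri\'on \cite{carrion}). Throughout I use the elementary fact that, for an almost Hermitian manifold, $(\nabla_XJ)X=0$ for all $X$ is equivalent to the tensor $\nabla\sigma$ being totally skew-symmetric, so that $\nabla\sigma$ may be regarded as a $3$-form and then $d\sigma=3\,\nabla\sigma$. First I would record a representation-theoretic fact special to $n=3$: the module $\mathcal{W}_1$ coincides with $[\![\Lambda^{3,0}]\!]\subseteq\Lambda^3$, the space of totally skew $3$-forms of type $(3,0)+(0,3)$ (indeed $\mathcal{W}_1$ is an irreducible $2$-dimensional $\mathrm{U}(3)$-submodule of $\Lambda^1\otimes[\![\Lambda^{2,0}]\!]$, and since $\nabla\sigma$ is totally skew it must lie in $\Lambda^3$, forcing $\mathcal{W}_1=[\![\Lambda^{3,0}]\!]$). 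Hence on a nearly K\"ahler six-manifold $\rho\coloneqq\nabla\sigma$ is a $3$-form of type $(3,0)+(0,3)$, nowhere vanishing by strictness; a nowhere-zero real $(3,0)+(0,3)$-form is the real part of a nowhere-zero $(3,0)$-form, so $\rho$ trivialises the canonical bundle and, after the standard unit normalisation and a choice of orientation, singles out a global complex $(3,0)$-form $\psi_{\mathbb C}=\psi_++i\psi_-$ with $\rho=3\mu\,\psi_+$ for a positive function $\mu$ (a fixed multiple of $|\nabla\sigma|$). Then $d\sigma=3\rho=3\mu\psi_+$ is immediate.

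\emph{The second equation and constancy of $\mu$.} For the second equation I would differentiate using the canonical Hermitian (``characteristic'') connection $\bar\nabla\coloneqq\nabla-\tfrac12 J(\nabla J)$: on a nearly K\"ahler manifold $\bar\nabla$ has totally skew, $\bar\nabla$-parallel torsion, and — precisely because $\rho$ has pure type $(3,0)+(0,3)$ when $n=3$ — its holonomy lies in $\mathrm{SU}(3)$, so $\bar\nabla\sigma=0$, $\bar\nabla\psi_{\mathbb C}=0$, and the torsion $3$-form is a multiple of $\psi_+$. Writing $d$ in terms of $\bar\nabla$ and its torsion, the closedness $d\psi_+=0$ and the expression for $d\psi_-$ become purely algebraic contractions of $\psi_+$ against $\psi_+$; by type these produce zero and a multiple of $\sigma\wedge\sigma$ respectively, and tracking the universal constants — fixed by the $\mathrm{SU}(3)$ identities $\sigma\wedge\psi_\pm=0$ and $\psi_+\wedge\psi_-=\tfrac23\,\sigma\wedge\sigma\wedge\sigma$ — gives $d\psi_-=-2\mu\,\sigma\wedge\sigma$. (Equivalently, one invokes the general formulae writing $d\sigma$, $d\psi_\pm$ through the torsion modules $W_1,\dots,W_5$ of an $\mathrm{SU}(3)$-structure; the nearly K\"ahler condition kills all of them except the one-dimensional class $W_1^-$, and \eqref{nk_structure_equations} is exactly what those formulae then say.) Finally $\mu$ is constant: applying $d$ to $d\psi_-=-2\mu\,\sigma\wedge\sigma$ yields $0=-2\,d\mu\wedge\sigma\wedge\sigma-4\mu\,\sigma\wedge d\sigma=-2\,d\mu\wedge\sigma\wedge\sigma$, since $\sigma\wedge d\sigma=3\mu\,\sigma\wedge\psi_+=0$, and wedging with $\sigma\wedge\sigma$ is injective on $1$-forms in dimension six; hence $d\mu=0$. (Alternatively, quote Theorem \ref{einstein_condition}: $|\nabla\sigma|^2$ is a constant multiple of the scalar curvature, which is constant by Schur's lemma.)

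\emph{Converse.} Conversely, let $(\sigma,\psi_\pm)$ be the defining forms of an $\mathrm{SU}(3)$-structure (so they satisfy the usual algebraic compatibility relations and induce $g$ and $J$) and let $\mu$ be a nonzero constant with $d\sigma=3\mu\psi_+$ and $d\psi_-=-2\mu\,\sigma\wedge\sigma$. From $d^2\sigma=0$ and $\mu\neq0$ we also get $d\psi_+=0$, so $d\sigma$, $d\psi_+$, $d\psi_-$ are all determined; matching them against the torsion-module formulae forces every $\mathrm{SU}(3)$-torsion class to vanish except $W_1^-=-2\mu$. Since $\nabla\sigma$ depends only on the $\mathrm{U}(3)$-part of the intrinsic torsion (governed by $W_1,\dots,W_4$; the module $W_5$ is $\mathrm{SU}(3)$-specific and does not affect $\nabla\sigma$), it follows that $\nabla\sigma\in\mathcal{W}_1$, i.e.\ $\nabla\sigma$ is totally skew of type $(3,0)+(0,3)$; in particular $(\nabla_XJ)X=0$, so $(M,g,J)$ is nearly K\"ahler (and strict, as $\mu\neq0$). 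One may equally avoid the torsion vocabulary and instead use the standard algebraic formula recovering $\nabla\sigma$ from $d\sigma$, $d\psi_+$, $d\psi_-$, into which \eqref{nk_structure_equations} substitutes directly.

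\emph{Main obstacle.} The crux is the second structure equation: first, the type statement that $\nabla\sigma$ is purely $(3,0)+(0,3)$ in dimension six — this is what promotes $\bar\nabla$ from a $\mathrm{U}(3)$- to an $\mathrm{SU}(3)$-connection and hence yields $\bar\nabla\psi_{\mathbb C}=0$; and second, the careful bookkeeping of the universal constants linking $\nabla\sigma$, the torsion of $\bar\nabla$, the identity $\bar\nabla\psi_{\mathbb C}=0$ and the $\mathrm{SU}(3)$-compatibility relations, so as to land on the coefficient $-2\mu$ rather than some other multiple of $\mu$. The remaining ingredients are Hermitian linear algebra, the identity $d\sigma=3\,\mathrm{Alt}(\nabla\sigma)$, and the one-line $d^2=0$ argument above.
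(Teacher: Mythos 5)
Your overall architecture---reading the equations off the canonical $\mathrm{SU}(3)$-structure via the characteristic connection $\bar\nabla=\nabla-\tfrac12 J(\nabla J)$ and the intrinsic-torsion formalism of Chiossi--Salamon---is a legitimate alternative to the paper's route, which instead works with an explicit local unitary coframe $\{f^i\}$, proves the equivalence between the nearly K\"ahler condition and $(df^i)^{0,2}=\lambda\,\overline{f}{}^j\wedge\overline{f}{}^k$ (Lemma \ref{prop1}), and computes $d\sigma$ and $d\psi_-$ directly; Remark \ref{alternative_proof} of the paper explicitly acknowledges a version of your torsion-class argument, and your converse (recovering the torsion from $d\sigma,d\psi_\pm$ and concluding $\nabla\sigma\in\mathcal W_1$) is parallel to the paper's wedge-product argument. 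Two small slips: if $d\sigma=3\nabla\sigma=3\mu\psi_+$ then $\nabla\sigma=\mu\psi_+$, not $3\mu\psi_+$; and the normalisations relating $\lambda$, $\mu$ and $\lvert\nabla\sigma\rvert$ need to be tracked consistently.

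There is, however, a genuine gap at the crux, and it is exactly the one this paper is written to close. You assert that ``precisely because $\rho$ has pure type $(3,0)+(0,3)$'' the holonomy of $\bar\nabla$ reduces to $\mathrm{SU}(3)$, so that $\bar\nabla\psi_{\mathbb C}=0$ and the torsion is $\bar\nabla$-parallel. The pure type of $\nabla\sigma$ only says that the canonical bundle is trivialised by a nowhere-zero $(3,0)$-form; it does not say that this form is $\bar\nabla$-parallel. That parallelism---equivalently the constancy of $\mu$, equivalently $\widehat\nabla(\nabla\sigma)=0$---is a nontrivial theorem whose proof occupies all of Section \ref{curvature}: one needs the polarised curvature identity \eqref{inner_products}, the second-derivative formula \eqref{cyclic_sum_sigma}, the identity $\Ric-\Ricc=4\mu^2\id$ and formula \eqref{nabla_curvature} for $\nabla(\Ric-\Ricc)$ to conclude $d\mu=0$ (Proposition \ref{mu_constant}), and only then does Proposition \ref{su3_connection} yield $\widehat\nabla\psi_{\mathbb C}=0$. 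Your fallbacks do not repair this. The $d^2=0$ computation applied to $d\psi_-=-2\mu\,\sigma\wedge\sigma$ presupposes that very equation, whose derivation in your scheme rests either on $\bar\nabla\psi_{\mathbb C}=0$ or on $d\psi_+=0$; but $0=d^2\sigma=3\,d\mu\wedge\psi_++3\mu\,d\psi_+$ shows that $d\psi_+=0$ already requires $d\mu=0$. Quoting Theorem \ref{einstein_condition} is circular here, since in this paper the Einstein condition is derived from the constancy of $\mu$, and relating $\lvert\nabla\sigma\rvert^2$ to the scalar curvature uses $\Ric=5\Ricc$, the other hard half of the same argument. To complete your proof you must either supply the curvature argument of Section \ref{curvature}, or run the careful type-decomposition of Remark \ref{alternative_proof} on the globally defined forms $\mu\psi_+=\nabla\sigma$ and $\mu\psi_-$ (not on $\psi_\pm$ separately), or cite the parallel-torsion theorem for nearly K\"ahler manifolds explicitly as an external input.
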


The function $\mu$ appears when computing the norm of any vector field of the form $(\nabla_XJ)Y$, in fact we will show exactly how. 
That $\mu$ is constant is closely related to the Einstein condition. This seems to be a delicate issue in the literature. 
The main ideas behind it are scattered in essentially two works by Gray \cite{gray}, \cite{gray1976}, but the massive---though impressive---amount of technical formulas obscures the key steps. 
A clearer approach was pursued by Morris, who nonetheless seems to gloss over the details of a crucial step (see in particular the proof of formula ($4.15$), Section $4.2$ in \cite{morris}, where there is no explanation of the fact that $\beta$ sits inside $\mathrm{Sym}^2(\Lambda^2 TM^*)$). What is more, Carri\'on discusses the equivalence stated in Theorem \ref{theorem_carrion}, but does not provide any direct proof of the fact that $\mu$ is constant: this point is claimed to be a consequence of \cite[Theorem 4.20]{carrion}. Notice that if one assumes to have an $\mathrm{SU}(3)$-structure $(\sigma,\psi_{\pm})$ on the manifold satisfying Equations \eqref{nk_structure_equations}, the constancy of $\mu$ is then immediate: since $\sigma \wedge \psi_+=0$ ($\sigma$ is of type $(1,1)$ with respect to the almost complex structure defined by $\psi_{\mathbb C}$) we calculate
$$0 = d(d\psi_-) = -2d\mu \wedge \sigma \wedge \sigma - 12 \mu^2 \sigma \wedge \psi_+ = -2d\mu \wedge \sigma^2,$$
whence $d\mu = 0$, as $\sigma$ is non-degenerate. Part of our work will be to show that $\mu$ is constant using only the $\mathrm{U}(3)$-structure on $M$. We will see how this fact leads to Theorem \ref{einstein_condition} and \ref{theorem_carrion}.

The goal of this paper is to go again through the proof that nearly K\"ahler metrics in dimension six are Einstein, hoping to provide people interested in this field with a unifying reference.
We keep a basic approach, in particular we do not make use of the existence of Killing spinors. We describe symmetries and introduce useful curvature identities combining the results of Gray and Morris', thus giving a complete proof of Theorem \ref{einstein_condition}. 
Finally, we expand the outcome with a self-contained proof of Theorem~\ref{theorem_carrion}, hence rephrase Definition \ref{nk_definition} in terms of the PDEs \eqref{nk_structure_equations}. \\

\textbf{Acknowledgements}. The material contained in this paper is part of my PhD thesis \cite{phd_thesis}. This work was partly supported by the Danish Council for Independent Research | Natural Sciences 
Project DFF - 6108\kern .05em-00358, and by the Danish National Research Foundation grant DNRF95 (Centre for Quantum Geometry of Moduli Spaces). 
I am also grateful to the Max Planck Institute for Mathematics in Bonn for its hospitality and financial support. Further, I thank the anonymous reviewer for useful comments on the exposition.
Lastly, I thank Andrew Swann for his effective, crucial help along this part of my PhD project.

\section{Symmetries}
\label{symmetries}

Let us start off with a nearly K\"ahler manifold $(M,g,J)$ as in Definition \ref{nk_definition}. We assume throughout $M$ to be connected. 
This is not restrictive and will simplify some parts of the exposition. Define the two-form $\sigma \coloneqq g(J{}\cdot{},{}\cdot{})$ and let $\nabla$ be the Levi-Civita connection. We work without specifying the dimension of $M$ and switch to the six-dimensional case only when needed. 

\begin{lemma}
\label{covariant_derivative_sigma_etc}
Let $(M,g,J,\sigma)$ be a nearly K\"ahler manifold and $X,Y,Z$ be vector fields on $M$.
\begin{enumerate}
\item We have the general formula
\begin{equation}
\label{covariant_derivative_j}
(\nabla_{JX}J)Y = -J(\nabla_XJ)Y.
\end{equation}
\item We can move $J$ across all the entries of $\nabla\sigma$:
\begin{equation}
\label{moving_j_around_eq}
\nabla\sigma(JX,Y,Z) = \nabla\sigma(X,JY,Z) = \nabla\sigma(X,Y,JZ).
\end{equation}
\end{enumerate}
\end{lemma}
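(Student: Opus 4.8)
The plan is to reduce everything to two elementary structural identities valid on any nearly K\"ahler manifold and then chase signs. First I would polarise the defining condition $(\nabla_XJ)X=0$: replacing $X$ by $X+Y$ and expanding by bilinearity gives the skew-symmetry
\[
(\nabla_XJ)Y = -(\nabla_YJ)X
\]
for all vector fields $X,Y$. Second, differentiating the pointwise relation $J^2=-\id$ along any $X$ yields $(\nabla_XJ)\circ J + J\circ(\nabla_XJ)=0$, i.e.\ $(\nabla_XJ)(JY) = -J\bigl((\nabla_XJ)Y\bigr)$; this step uses only $J^2=-\id$, not the nearly K\"ahler hypothesis. I will also use repeatedly that $J$ is $g$-skew-adjoint, $g(JV,W)=-g(V,JW)$, and that $\nabla g=0$, so that $\nabla\sigma(X,Y,Z)=(\nabla_X\sigma)(Y,Z)=g\bigl((\nabla_XJ)Y,Z\bigr)$.

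For part (1) I would simply compute
\[
(\nabla_{JX}J)Y \;=\; -(\nabla_YJ)(JX) \;=\; J\bigl((\nabla_YJ)X\bigr) \;=\; -J\bigl((\nabla_XJ)Y\bigr),
\]
using skew-symmetry in the first equality, the differentiated relation $J^2=-\id$ in the second, and skew-symmetry again in the third.

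For part (2) the equality of the last two expressions comes from the relation $(\nabla_XJ)J=-J(\nabla_XJ)$ together with skew-adjointness of $J$:
\[
\nabla\sigma(X,Y,JZ) = g\bigl((\nabla_XJ)Y,JZ\bigr) = -g\bigl(J(\nabla_XJ)Y,Z\bigr) = g\bigl((\nabla_XJ)(JY),Z\bigr) = \nabla\sigma(X,JY,Z),
\]
and the remaining equality then follows by feeding part (1) into the same manipulation:
\[
\nabla\sigma(JX,Y,Z) = g\bigl((\nabla_{JX}J)Y,Z\bigr) = -g\bigl(J(\nabla_XJ)Y,Z\bigr) = g\bigl((\nabla_XJ)Y,JZ\bigr) = \nabla\sigma(X,Y,JZ).
\]

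I do not expect a genuine obstacle: the whole lemma is bookkeeping with the two identities above, and the only care needed is sign consistency between the convention $\sigma(X,Y)=g(JX,Y)$ (so $J$ skew-adjoint) and the differentiated relation $(\nabla_XJ)J=-J(\nabla_XJ)$. As a by-product one may note that combining the antisymmetry of $\sigma$ in its two slots with the polarised identity shows $\nabla\sigma$ is totally skew, hence a $3$-form; this is not needed for the statement but makes part (2) conceptually transparent and will be convenient later.
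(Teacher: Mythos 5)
Your proposal is correct and follows essentially the same route as the paper: part (1) is obtained by combining the polarised nearly K\"ahler identity $(\nabla_XJ)Y=-(\nabla_YJ)X$ with the anticommutation $(\nabla_XJ)J=-J(\nabla_XJ)$ coming from differentiating $J^2=-\id$, and part (2) follows from $\nabla\sigma(X,Y,Z)=g((\nabla_XJ)Y,Z)$ together with the skew-adjointness of $J$, exactly as in the paper's argument. The signs all check out, so there is nothing to add.
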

\begin{proof}
The first formula follows from the identity $0=(\nabla J^2) = (\nabla J)J+J(\nabla J)$ and the skew-symmetry of $\nabla J$:
\begin{equation*}
(\nabla_{JX}J)Y = -(\nabla_YJ)JX = J(\nabla_YJ)X = -J(\nabla_XJ)Y.
\end{equation*} 
Now recall that $g$ is $\nabla$-parallel, so for each triple $U,V,Z$ of vector fields one has
\begin{align*}
\nabla \sigma (U,V,Z) & = U(g(JV,Z))-g(J\nabla_UV,Z)-g(JV,\nabla_UZ) \nonumber \\
& = g(\nabla_U JV,Z)-g(J\nabla_UV,Z) \nonumber \\
& = g((\nabla_UJ)V,Z).
\end{align*}
Hence $\nabla\sigma(JX,Y,Z) = g((\nabla_{JX}J)Y,Z) = -g(J(\nabla_XJ)Y,Z)$. But $J$ is orthogonal, thus the latter equals $\nabla\sigma(X,Y,JZ)$. On the other hand $-g(J(\nabla_XJ)Y,Z)$ coincides with $g((\nabla_XJ)JY,Z)=\nabla\sigma(X,JY,Z)$ as well.
\end{proof}

\begin{remark}
\label{remark_1}
Since $M$ is nearly K\"ahler, $\nabla \sigma$ is a three-form, and $(\nabla_X J)Y$ is orthogonal to $X,Y,JX,JY$. Conversely, if we assume $\nabla \sigma$ to be skew-symmetric, then $\nabla\sigma(X,X,Y) = g((\nabla_X J)X,Y)=0$ for every $Y$, so $M$ is nearly K\"ahler. 
\end{remark}

For the rest of this section we assume $M$ has dimension six. The main intention here is to provide a unifying language to describe symmetries of useful tensors. We refer to Salamon~\cite{salamon} for notations and ideas.
Recall the identity of Lie groups 
\begin{equation*}
\mathrm{U}(n) = \mathrm{SO}(2n) \cap \mathrm{GL}(n,\mathbb{C}).
\end{equation*}
In real dimension six this tells us $\mathrm{U}(3)$ is the stabiliser in $\mathrm{GL}(6,\mathbb{R})$ of an inner product and a complex structure $J_0$ on a copy of $\mathbb{R}^6$. At the level of Lie algebras, this identity implies in particular that elements of $\mathfrak{u}(3)$ commute with~$J_0$. We shall always think of $\mathrm{U}(3)$ as a subgroup of $\mathrm{SO}(6)$. At each point of $M$ there is a representation of $\mathrm{U}(3)$ on the tangent space inducing the structure of $\mathrm{U}(3)$-module on the complexified vector space of $k$-forms, which we denote simply by $\Lambda^k \otimes \mathbb{C}$. Note that every orthogonal matrix coincides with the transpose of its inverse, so the $\mathrm{U}(3)$-modules $\Lambda^k T_pM{}^*$ and $\Lambda^k T_pM$ are equivalent, and one loses no information in identifying $k$-forms and $k$-vectors. This explains the choice of the symbol $\Lambda^k$ for the space of real $k$-forms, and will allow us to identify $\mathrm{U}(3)$-modules and their duals in other circumstances. There is an isomorphism of vector spaces 
$$\Lambda^k \otimes \mathbb{C} = \bigoplus_{p+q=k} \Lambda^p(\Lambda^{1,0}) \otimes \Lambda^q(\Lambda^{0,1}),$$ 
and by definition $\Lambda^{p,q} \coloneqq \Lambda^p(\Lambda^{1,0}) \otimes \Lambda^q(\Lambda^{0,1})$ is the space of complex differential forms of type $(p,q)$. Each $\Lambda^{p,q}$ is a $\mathrm{U}(3)$-invariant complex module.

For $p\neq q$ we denote by $[\![\Lambda^{p,q}]\!]$ the \emph{real} vector space underlying $\Lambda^{p,q}$, whose complexification is $[\![\Lambda^{p,q}]\!] \otimes \mathbb{C} = \Lambda^{p,q} \oplus \Lambda^{q,p}$, whereas $[\Lambda^{p,p}]$ is the space of type $(p,p)$-forms $\alpha$ such that $\overline{\alpha} = \alpha$, hence $[\Lambda^{p,p}] \otimes \mathbb{C}=\Lambda^{p,p}$. We then have isomorphisms of $\mathrm{U}(3)$-modules such as
$$
\Lambda^1 = [\![\Lambda^{1,0}]\!], \quad \Lambda^2 = [\![\Lambda^{2,0}]\!] \oplus [\Lambda^{1,1}], \quad \Lambda^3 = [\![\Lambda^{3,0}]\!] \oplus [\![\Lambda^{2,1}]\!], \text{ etc.}
$$
Each real form of type $(p,q)+(q,p)$ satisfies a specific relation with $J$. To show this, we specialise to the cases $k=2$ and $k=3$.

At every point of $M$, the metric $g$ yields a canonical isomorphism $\mathfrak{so}(6)=\Lambda^2,$ which is obtained by mapping each $A$ in $\mathfrak{so}(6)$ to the two-form $g(A{}\cdot{},{}\cdot{})$. Viewing $\mathfrak{so}(6)$ as the adjoint representation of $\mathrm{SO}(6) \supset \mathrm{U}(3)$, we have actually got an isomorphism of $\mathrm{U}(3)$-modules: for $A \in \mathfrak{so}(6)$ and $B \in \mathrm{U}(3)$, the action of $B$ on two-forms gives
$$Bg(A{}\cdot{},{}\cdot{}) = g(AB^{-1}{}\cdot{},B^{-1}{}\cdot{}) = g(BAB^{-1}{}\cdot{},{}\cdot{}),$$
so the map $A \mapsto g(A{}\cdot{},{}\cdot{})$ is $\mathrm{U}(3)$-equivariant and our claim follows. Now consider the splitting $\mathfrak{so}(6) = \mathfrak{u}(3) \oplus \mathfrak{u}(3)^{\perp}$, where $\mathfrak{u}(3)^{\perp}$ is the orthogonal complement of $\mathfrak{u}(3)$ as a subspace of $\mathfrak{so}(6)$. Any endomorphism $A$ in $\mathfrak{u}(3)$ corresponds to a two-form $\alpha = g(A{}\cdot{},{}\cdot{})$ such that $\alpha(JX,JY) = \alpha(X,Y)$: since $A$ and $J$ commute
\begin{equation}
\label{two_form_properties_1}
\alpha(JX,JY) = g(AJX,JY) = g(JAX,JY) = g(AX,Y) = \alpha(X,Y).
\end{equation} 
On the other hand, a two-form $\beta$ in $[\Lambda^{1,1}]$ is defined so as to vanish on pairs of complex vectors of the same type, namely $\beta(X-iJX,Y-iJY) = 0$. Thus $\beta(JX,JY) = \beta(X,Y)$, and by counting dimensions the following splittings are equivalent:
\begin{equation*}
\mathfrak{so}(6) = \mathfrak{u}(3) \oplus \mathfrak{u}(3)^{\perp}, \qquad \Lambda^2 = [\Lambda^{1,1}] \oplus [\![\Lambda^{2,0}]\!].
\end{equation*}
We have already encountered a two-form enjoying the property of elements in $[\Lambda^{1,1}]$, that is the fundamental two-form $\sigma$. Identity \eqref{two_form_properties_1} is readily checked: 
\begin{equation*}
\sigma(JX,JY) = -g(X,JY) = g(JX,Y) = \sigma(X,Y).
\end{equation*}
Elements of $[\Lambda^{1,1}]$ are then eigenvectors of $J$ with eigenvalue~$+1$. Likewise, elements of $[\![\Lambda^{2,0}]\!]$ are eigenvectors of $J$ with eigenvalue~$-1$, which is now trivial to check. 

Identity \eqref{moving_j_around_eq} implies
\begin{equation}
\label{nabla_sigma_two_zero}
\nabla\sigma(U,JV,JZ) = \nabla\sigma(U,V,J^2Z)= -\nabla\sigma(U,V,Z).
\end{equation}
which we may then rephrase by saying $\nabla \sigma$ sits inside $\Lambda^1 \otimes [\![\Lambda^{2,0}]\!]$. Further, since $M$ is nearly K\"ahler, $\nabla \sigma$ actually takes values in $[\![\Lambda^{3,0}]\!]$. Recall that $\nabla\sigma$ is skew-symmetric, so \eqref{moving_j_around_eq} and \eqref{nabla_sigma_two_zero} imply
\begin{equation*}
\nabla\sigma(JU,JV,Z)+\nabla\sigma(U,JV,JZ)+\nabla\sigma(JU,V,JZ) = -3\nabla\sigma(U,V,Z).
\end{equation*}
On the other hand this is a characteristic property of elements in $[\![\Lambda^{3,0}]\!]$: given $\beta \in \Lambda^3$, this lies in $[\![\Lambda^{3,0}]\!]$ if and only if $J\beta = -3\beta$, where $J$ acts on $\beta$ via
$$J\beta(X,Y,Z) \coloneqq \beta(JX,JY,Z)+\beta(X,JY,JZ)+\beta(JX,Y,JZ),$$
whence $\nabla\sigma \in [\![\Lambda^{3,0}]\!]$.

A last observation is motivated by Remark \ref{remark_1}. Since $\nabla\sigma$ is a three-form, there is a relation between $d\sigma$ and $\nabla \sigma$. This is readily worked out, as $d\sigma = \mathcal{A}\nabla\sigma$, where $(\mathcal{A}\nabla\sigma) (X,Y,Z) \coloneqq \cyclicsum_{ X,Y,Z} \nabla\sigma(X,Y,Z)$, and $\nabla \sigma$ skew-symmetric implies $d\sigma = 3\nabla \sigma$. Conversely, if $d\sigma = 3\nabla \sigma$, then Remark \ref{remark_1} shows $M$ is nearly K\"ahler. We summarise all our observations in
\begin{prop}
\label{first_equivalent_characterisations}
Assume $(M,g,J)$ is an almost Hermitian six-manifold and let $\sigma = g(J{}\cdot{},{}\cdot{})$ be the fundamental two-form. Then the following are equivalent:
\begin{enumerate}
\item $M$ is nearly K\"ahler.
\item $\nabla \sigma$ is a three-form lying in $[\![\Lambda^{3,0}]\!]$.
\item $d\sigma = 3\nabla \sigma$.
\end{enumerate}
\end{prop}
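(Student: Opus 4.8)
The plan is to prove the cycle of implications $(1)\Rightarrow(2)\Rightarrow(3)\Rightarrow(1)$, since each step is a short consequence of the observations collected above.

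For $(1)\Rightarrow(2)$, Remark \ref{remark_1} already tells us that on a nearly K\"ahler manifold $\nabla\sigma$ is a three-form. To see that it lies in $[\![\Lambda^{3,0}]\!]$ I would use the characterisation recalled above: a three-form $\beta$ belongs to $[\![\Lambda^{3,0}]\!]$ precisely when $J\beta = -3\beta$, where $J\beta(X,Y,Z) = \beta(JX,JY,Z)+\beta(X,JY,JZ)+\beta(JX,Y,JZ)$. Feeding $\beta = \nabla\sigma$ into this, each of the three terms can be rewritten using \eqref{moving_j_around_eq} together with its consequence \eqref{nabla_sigma_two_zero} and the skew-symmetry of $\nabla\sigma$, and each collapses to $-\nabla\sigma(X,Y,Z)$; hence $J(\nabla\sigma) = -3\nabla\sigma$, that is $\nabla\sigma \in [\![\Lambda^{3,0}]\!]$.

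For $(2)\Rightarrow(3)$, I would recall the general identity $d\sigma = \mathcal{A}\nabla\sigma$, with $(\mathcal{A}\nabla\sigma)(X,Y,Z) = \cyclicsum_{X,Y,Z}\nabla\sigma(X,Y,Z)$, which holds because $\sigma$ is skew in its two arguments and $\nabla$ is torsion-free. If $\nabla\sigma$ is a genuine three-form, each cyclic summand equals $\nabla\sigma(X,Y,Z)$, so $d\sigma = 3\nabla\sigma$. Finally, $(3)\Rightarrow(1)$: the relation $\nabla\sigma = \frac{1}{3}d\sigma$ exhibits $\nabla\sigma$ as a three-form, in particular skew-symmetric, whence $\nabla\sigma(X,X,Y) = g((\nabla_XJ)X,Y) = 0$ for every $Y$ and therefore $(\nabla_XJ)X = 0$; this is exactly the converse direction of Remark \ref{remark_1}.

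Since each step only reassembles results already at hand, I do not anticipate a genuine obstacle; the only point needing a little care is the bookkeeping in $(1)\Rightarrow(2)$, namely checking that all three summands defining $J(\nabla\sigma)$ are correctly rewritten via \eqref{moving_j_around_eq} and \eqref{nabla_sigma_two_zero}.
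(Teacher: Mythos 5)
Your proposal is correct and follows essentially the same route as the paper: the implication $(1)\Rightarrow(2)$ via Remark \ref{remark_1} together with the identities \eqref{moving_j_around_eq}, \eqref{nabla_sigma_two_zero} and the eigenvalue characterisation $J\beta=-3\beta$ of $[\![\Lambda^{3,0}]\!]$; $(2)\Rightarrow(3)$ via $d\sigma=\mathcal{A}\nabla\sigma$ and skew-symmetry; and $(3)\Rightarrow(1)$ by reading off $(\nabla_XJ)X=0$ from the skew-symmetry of $\tfrac{1}{3}d\sigma$. No gaps.
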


\section{Curvature identities}
\label{curvature}

By Remark \ref{remark_1}, each tangent space of $M$ splits as the orthogonal direct sum of three $J$-invariant planes $$\langle X,JX \rangle \oplus \langle Y,JY \rangle \oplus \langle (\nabla_XJ)Y, J(\nabla_XJ)Y\rangle,$$ where angular brackets denote the real vector space spanned by a pair of vectors and $Y$ is orthogonal to the span of $X$ and $JX$. The following technical lemma states the existence of a special function on~$M$ relating the norm of $(\nabla_XJ)Y$ with $\lVert X\rVert$, $\lVert Y\rVert$, $g(X,Y)$, and $\sigma(X,Y)$.
\begin{lemma}
\label{definition_mu}
There exists a non-negative function $\mu$ on $M$ such that 
\begin{equation}
\label{mu_local_to_global}
\lVert (\nabla_XJ)Y\rVert^2 = \mu^2\bigl(\lVert X\rVert^2 \lVert Y\rVert^2-g(X,Y)^2-\sigma(X,Y)^2\bigr)
\end{equation}
for every pair of vector fields $X,Y$ on $M$.
\end{lemma}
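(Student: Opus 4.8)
The statement is pointwise, so I would fix $p\in M$ and model $T_pM$ on the Euclidean space $(V,g_0,J_0)$. The key structural input is Proposition~\ref{first_equivalent_characterisations}: $\nabla\sigma|_p$ lies in $[\![\Lambda^{3,0}]\!]$, which in real dimension six is a \emph{two}-dimensional $\mathrm U(3)$-module on which $\mathrm U(3)$ acts through the determinant character $\det\colon\mathrm U(3)\to\mathrm U(1)$ (the top exterior power acts by $\det$ on $\Lambda^{3,0}\cong\mathbb C$). Hence every non-zero element of $[\![\Lambda^{3,0}]\!]_p$ is a positive multiple of a fixed model form by some unitary transformation: writing $\psi_+^0\coloneqq\re(\zeta^1\wedge\zeta^2\wedge\zeta^3)$ and $\psi_-^0\coloneqq\im(\zeta^1\wedge\zeta^2\wedge\zeta^3)$ for a unitary coframe $\zeta^j=e^{2j-1}+ie^{2j}$, these span $[\![\Lambda^{3,0}]\!]_p$ and are orthogonal of equal norm, so $\nabla\sigma|_p=a\psi_+^0+b\psi_-^0$ and a suitable $\det$-rotation of the coframe achieves $b=0$, $a\geq0$. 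I would then \emph{define} $\mu(p)\coloneqq a=\lVert\nabla\sigma\rVert_p/\lVert\psi_+^0\rVert\geq0$; this is well defined because $\lVert\nabla\sigma\rVert$ is $\mathrm U(3)$-invariant, and it is continuous, indeed smooth wherever $\nabla\sigma\neq0$.

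Next I would rewrite the left-hand side of \eqref{mu_local_to_global} as an exterior-algebra expression. The computation in the proof of Lemma~\ref{covariant_derivative_sigma_etc} gives $\nabla\sigma(X,Y,Z)=g((\nabla_XJ)Y,Z)$, so $\iota_Y\iota_X\nabla\sigma$ is the metric dual of $(\nabla_XJ)Y$ and $\lVert(\nabla_XJ)Y\rVert^2=\lVert\iota_Y\iota_X\nabla\sigma\rVert^2$. Both sides of \eqref{mu_local_to_global} are homogeneous of degree two in $X$, so I may assume $\lVert X\rVert=1$; using transitivity of $\mathrm U(3)$ on the unit sphere together with the residual $\mathrm U(2)$-freedom fixing $X$ (whose image under $\det$ is all of $\mathrm U(1)$), I can choose the coframe above so that in addition $e_1=X$, $e_2=JX$, while still $\nabla\sigma|_p=\mu(p)\,\psi_+^0$. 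Then $\lVert(\nabla_XJ)Y\rVert^2=\mu^2\lVert\iota_Y\iota_{e_1}\psi_+^0\rVert^2$, and the direct contraction $\iota_{e_1}\psi_+^0=e^{35}-e^{46}$ gives $\iota_Y\iota_{e_1}\psi_+^0$ squared norm $\lVert Y\rVert^2-(Y^1)^2-(Y^2)^2$; since $Y^1=g(e_1,Y)=g(X,Y)$ and $Y^2=g(e_2,Y)=g(JX,Y)=\sigma(X,Y)$, restoring $\lVert X\rVert$ by homogeneity yields \eqref{mu_local_to_global}, with $\mu$ non-negative by construction.

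A frame-free variant I would mention as an alternative: for fixed $p$ the map $(X,Y)\mapsto\lVert(\nabla_XJ)Y\rVert^2$ is biquadratic and invariant under $\Stab(\nabla\sigma|_p)\subset\mathrm U(3)$, hence a linear combination of $\lVert X\rVert^2\lVert Y\rVert^2$, $g(X,Y)^2$, $\sigma(X,Y)^2$ and $g(X,Y)\sigma(X,Y)$; one then fixes the coefficients by setting $Y=X$ and $Y=JX$ (both give zero, by near-K\"ahlerness and \eqref{covariant_derivative_j}) and kills the cross term using $\lVert(\nabla_{JX}J)Y\rVert=\lVert(\nabla_XJ)Y\rVert$. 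The main obstacle, in either route, is the same: establishing that the space of such invariants is no larger than this four-dimensional span — equivalently, that the complex volume form of the underlying $\mathrm{SU}(3)$-structure contributes no new invariant in this bidegree — which is exactly where the dimension being six enters. Granting that, the rest is the short contraction above, and the regularity and sign of $\mu$ follow from $\mu^2=\lVert\nabla\sigma\rVert^2/\lVert\psi_+^0\rVert^2$.
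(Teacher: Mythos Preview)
Your argument is correct and takes a different, more structural route than the paper. The paper works entirely by hand: given $X,Y$ near a point, it picks a $\mathrm U(3)$-adapted frame $\{E_i,JE_i\}$ with $X=aE_1$ and $Y=bE_1+cJE_1+dE_2$, \emph{defines} $\mu$ by $(\nabla_{E_1}J)E_2=\mu E_3$, and then checks directly that both sides of \eqref{mu_local_to_global} equal $a^2d^2\mu^2$; globality of $\mu$ is argued a posteriori from the formula itself. Your approach instead exploits that $[\![\Lambda^{3,0}]\!]$ is the two-real-dimensional representation on which $\mathrm U(3)$ acts through $\det$, so that $\mu(p)=\lVert\nabla\sigma\rVert_p/\lVert\psi_+^0\rVert$ is a manifestly global, frame-independent definition, after which the identity follows from the single contraction $\iota_{e_1}\psi_+^0=e^{35}-e^{46}$. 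The step where you invoke surjectivity of $\det\colon\mathrm U(2)\to\mathrm U(1)$ to normalise $X$ and $\nabla\sigma$ simultaneously is the substantive point, and it is fine. What you gain is a cleaner global definition of $\mu$ and a transparent display of where dimension six enters; what the paper gains is that no representation theory is needed at all. One small correction: your closing remark that ``the main obstacle, in either route'' is bounding the space of biquadratic invariants is not accurate for your first route---the direct contraction against the model $\psi_+^0$ already settles the identity with no appeal to the size of the invariant ring, so that obstacle pertains only to your frame-free alternative.
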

\begin{proof}
We define $\mu$ in terms of a local frame, then we extend it to a global function. Given $X$ and $Y$ in a neighbourhood of a point there exists an orthonormal frame $\{E_i,JE_i\},i=1,2,3$, such that $X = aE_1$ and $Y = bE_1+cJE_1+dE_2$ for local functions $a,b,c,d$. Define $\mu$ by $(\nabla_{E_1}J)E_2 \eqqcolon \mu E_3$. We may assume $\mu$ non-negative up to changing the orientation of the basis. Then $(\nabla_XJ)Y = a(\nabla_{E_1}J)dE_2 = ad\mu E_3$, which implies $\lVert (\nabla_XJ)Y\rVert^2 = a^2d^2\mu^2$. On the other hand, $\lVert X\rVert^2 \lVert Y\rVert^2 = a^2(b^2+c^2+d^2)$, $g(X,Y)^2 = a^2b^2$, and $\sigma(X,Y)^2 = a^2c^2$. So $\mu^2\bigl(\lVert X\rVert^2 \lVert Y\rVert^2-g(X,Y)^2-\sigma(X,Y)^2\bigr) = a^2d^2\mu^2$, and the formula is proved locally.

We can finally extend $\mu$ to a global function by imposing that \eqref{mu_local_to_global} be satisfied for all pairs of vector fields $X,Y$ on $M$. Notice that $\mu$ is defined by the square root of a non-negative function, thus need not be smooth at this stage.
\end{proof}

We shall now study how $\mu$ is related to the Riemannian and the Ricci tensors on $M$, and finally prove that $\mu$ is \emph{constant}. A first step in this direction is to consider second order covariant derivatives of $\sigma$ and study their symmetries. We follow \cite{gray1976} and \cite{morris} for this part. We will sometimes use the notation $\mathfrak{X}(M)$ for the Lie algebra of vector fields on $M$. The Riemannian curvature tensors of type $(3,1)$ and $(4,0)$ will be denoted by the same letter.

\begin{lemma}
\label{second_order_derivatives_sigma}
Let $R \in \Lambda^2 \otimes \mathfrak{so}(2n)$ be the type $(3,1)$ Riemannian curvature tensor of the Levi-Civita connection on $M$, given by $R(X,Y)Z \coloneqq \nabla_X\nabla_YZ-\nabla_Y\nabla_XZ-\nabla_{[X,Y]}Z$. The following identities hold for every quadruple of vector fields $W,X,Y,Z$ on $M$:
\begin{enumerate}
\item $\nabla^2 \sigma(W,X,Y,Z) - \nabla^2 \sigma(X,W,Y,Z) = \sigma(R(X,W)Y,Z)+\sigma(Y,R(X,W)Z).$
\item $\nabla^2 \sigma(X,X,JY,Y)=\lVert (\nabla_XJ)Y\rVert^2$.
\end{enumerate}
\end{lemma}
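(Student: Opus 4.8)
The plan is to derive both identities from the definition of $\nabla^2\sigma$ together with the Ricci identity for commuting covariant derivatives, exploiting that $g$ is $\nabla$-parallel so that derivatives pass onto $J$ alone. For part (1), I would start from the second covariant derivative $\nabla^2\sigma(W,X,Y,Z) = (\nabla_W\nabla_X\sigma)(Y,Z)$, where the inner object is the full covariant Hessian, and recall the standard fact that for any tensor $T$ one has $\nabla^2 T(W,X,\cdot) - \nabla^2 T(X,W,\cdot) = -(R(W,X)\cdot T)$, the curvature acting as a derivation on $T$. Applied to the two-form $\sigma$, the derivation action on the two slots $Y,Z$ gives $-\sigma(R(W,X)Y,Z) - \sigma(Y,R(W,X)Z)$; flipping the sign to $R(X,W)$ produces exactly the stated right-hand side. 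I would present this concretely by expanding $\nabla_W\nabla_X\sigma - \nabla_X\nabla_W\sigma - \nabla_{[W,X]}\sigma$ and matching it with the curvature term, noting the $\nabla_{[W,X]}\sigma$ piece is precisely what is subtracted off in the definition of $\nabla^2\sigma$.

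For part (2), I would specialise part (1) to $W=X$, $Y \rightsquigarrow JY$, $Z \rightsquigarrow Y$, which kills the left-hand combination partially, and instead work directly: using the identity $\nabla\sigma(U,V,Z) = g((\nabla_U J)V, Z)$ established in Lemma \ref{covariant_derivative_sigma_etc}, differentiate once more to get
\begin{equation*}
\nabla^2\sigma(X,X,JY,Y) = g\bigl((\nabla_X(\nabla J))(X,JY), Y\bigr) + \text{terms with } \nabla_X(JY),\ \nabla_X Y.
\end{equation*}
The correction terms reorganise, using that $\nabla\sigma$ is totally skew-symmetric (so $\nabla\sigma(X,X,\cdot) = 0$) and the nearly K\"ahler condition $(\nabla_X J)X = 0$, into a clean expression. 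The key manoeuvre is to invoke $(\nabla_X J)X = 0$ and differentiate it: $0 = \nabla_X\bigl((\nabla J)(X,X)\bigr) = (\nabla^2 J)(X,X,X) + 2(\nabla J)(\nabla_X X, X)$, which lets me trade second derivatives of $J$ against first derivatives. Combined with \eqref{moving_j_around_eq} to move $J$ out of the third slot, the right-hand side collapses to $g((\nabla_X J)Y, (\nabla_X J)Y) = \lVert(\nabla_X J)Y\rVert^2$.

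The main obstacle will be part (2): bookkeeping the correction terms that arise when the second covariant derivative hits the arguments $X$, $JY$, $Y$ rather than the tensor $\sigma$ itself, and seeing that they all vanish or recombine. The clean way to organise this is to fix a local frame that is normal at the point of interest (so $\nabla_X X = 0$ and all first derivatives of frame vectors vanish there), reducing $\nabla^2\sigma(X,X,JY,Y)$ at that point to $g\bigl((\nabla_X\nabla_X J)Y, JY\bigr)$ evaluated pointwise; then the differentiated nearly K\"ahler relation gives $(\nabla_X\nabla_X J)X = 0$ at the point, and polarisation in $X$ together with the skew-symmetry of $\nabla J$ and formula \eqref{covariant_derivative_j} forces $g\bigl((\nabla_X\nabla_X J)Y, JY\bigr) = \lVert(\nabla_X J)Y\rVert^2$. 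I would double-check the sign by testing on the standard nearly K\"ahler $S^6$, where both sides are known explicitly.
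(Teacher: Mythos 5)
Part (1) of your proposal is fine: the identity is exactly the Ricci identity for the two-form $\sigma$, which is what the paper does (it just derives it by hand, reducing the difference to $g((R(W,X)J)Y,Z)$ and expanding the derivation). No issue there.

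Part (2) contains a genuine gap: the step you single out as the ``key manoeuvre''---differentiating the nearly K\"ahler relation $(\nabla_XJ)X=0$ and polarising---cannot close the argument. Differentiating $(\nabla_UJ)V+(\nabla_VJ)U=0$ only yields the skew-symmetry $(\nabla^2_{Z,U}J)V=-(\nabla^2_{Z,V}J)U$, and polarising $(\nabla^2_{X,X}J)X=0$ only constrains the totally symmetric part of $\nabla^2J$; both statements are \emph{linear} in $\nabla^2J$ and $\nabla J$, whereas the target $\lVert(\nabla_XJ)Y\rVert^2$ is \emph{quadratic} in $\nabla J$, so no such identity can determine $g((\nabla^2_{X,X}J)Y,JY)$. (If you follow that thread---write $(\nabla^2_{X,X}J)Y=-(\nabla^2_{X,Y}J)X$ and commute derivatives with part (1)---you land on the paper's Lemma \ref{important_corollary}, i.e.\ the expression of the norm in terms of curvature, not a proof of part (2).) The identities that actually do the work are: (a) in the direct expansion, the genuinely second-order term $X(\nabla\sigma(X,JY,Y))$ dies because $\nabla\sigma(X,JY,Y)=\nabla\sigma(JX,Y,Y)=0$ \emph{identically} by \eqref{moving_j_around_eq} and skewness in the last two slots, while the term where $\nabla_X$ hits the argument $JY$ contributes $-\nabla\sigma(X,(\nabla_XJ)Y,Y)=g((\nabla_XJ)Y,(\nabla_XJ)Y)$ by skew-adjointness of $\nabla_XJ$ (this is the paper's route, and the leftover pieces cancel via \eqref{covariant_derivative_j}); or equivalently (b) differentiate $J^2=-\id$ \emph{twice} to get $(\nabla^2_{X,X}J)J+J(\nabla^2_{X,X}J)=-2(\nabla_XJ)^2$, the one identity in sight that is quadratic in $\nabla J$, and pair with $Y$. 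Finally, your normal-frame reduction both restates the problem and has a sign slip: tensoriality gives $\nabla^2\sigma(X,X,JY,Y)=g((\nabla^2_{X,X}J)JY,Y)=-g((\nabla^2_{X,X}J)Y,JY)$ in any frame, and even with $\nabla Y\rvert_p=0$ the argument $JY$ is not parallel at $p$ (since $\nabla_X(JY)=(\nabla_XJ)Y$ there), which is precisely where the first-order answer comes from.
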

\begin{proof}
To prove the first formula we show $\nabla^2 \sigma(W,X,Y,Z) - \nabla^2 \sigma(X,W,Y,Z) = g((R(W,X)J)Y,Z)$, then conclude. By expanding the first term
\begin{align*}
\nabla^2 \sigma(W,X,Y,Z) & = W(\nabla \sigma(X,Y,Z)) - \nabla \sigma(\nabla_W X,Y,Z) \\
& \qquad - \nabla \sigma(X,\nabla_WY,Z) - \nabla \sigma(X,Y,\nabla_W Z) \\
& = g(\nabla_W((\nabla_X J)Y),Z)+g((\nabla_XJ)Y,\nabla_WZ) - g((\nabla_{\nabla_WX}J)Y,Z) \\
& \qquad - g((\nabla_XJ)\nabla_WY,Z) - g((\nabla_XJ)Y,\nabla_WZ) \\
& = g((\nabla_W(\nabla_X J))Y,Z)-g((\nabla_{\nabla_WX}J)Y,Z).
\end{align*}
As an element of $\mathfrak{so}(2n)$, $R(W,X)$ is a skew-adjoint derivation. We can then rewrite the difference $\nabla^2 \sigma(W,X,Y,Z) - \nabla^2 \sigma(X,W,Y,Z)$ as 
\begin{align*}
g((R(W,X)J)Y,Z) & = g(R(W,X)JY,Z)-g(JR(W,X)Y,Z) \\
& = -g(JY,R(W,X)Z)-g(JR(W,X)Y,Z) \\
& = \sigma(Y,R(X,W)Z)+\sigma(R(X,W)Y,Z).
\end{align*}
In order to prove the second formula we make use of \eqref{moving_j_around_eq}:
\begin{align*}
\nabla^2 \sigma(X,X,JY,Y) & = X(\nabla\sigma(X,JY,Y))-\nabla\sigma(\nabla_XX,JY,Y) \\
& \qquad - \nabla\sigma(X,\nabla_X JY,Y) - \nabla\sigma(X,JY,\nabla_XY) \\
& = X(\nabla\sigma(JX,Y,Y))-\nabla\sigma(J\nabla_XX,Y,Y) \\
& \qquad - g((\nabla_XJ)\nabla_XJY,Y)-g((\nabla_XJ)JY,\nabla_XY) \\
& = g(\nabla_X JY,(\nabla_XJ)Y)-g((\nabla_XJ)JY,\nabla_XY) \\
& = g((\nabla_XJ)Y,(\nabla_XJ)Y)+g(J\nabla_XY,(\nabla_XJ)Y) \\
& \qquad -g((\nabla_XJ)JY,\nabla_XY) \\
& = \lVert (\nabla_XJ)Y\rVert^2,
\end{align*}
and the statement is proved.
\end{proof}

\begin{lemma}
\label{important_corollary}
Let $R \in \mathrm{Sym}^2(\Lambda^2)$ be the Riemannian curvature $(4,0)$-tensor obtained by contraction with the metric: $R(W,X,Y,Z) \coloneqq g(R(W,X)Y,Z)$. Then
\begin{equation}
\label{norm_nabla_j}
\lVert (\nabla_XJ)Y\rVert^2 = R(X,Y,JX,JY)-R(X,Y,X,Y), \quad X,Y \in \mathfrak{X}(M).
\end{equation}
\end{lemma}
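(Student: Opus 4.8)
The plan is to derive \eqref{norm_nabla_j} purely from the two identities of Lemma~\ref{second_order_derivatives_sigma} together with the fact that, on a nearly K\"ahler manifold, $\nabla\sigma$ is a three-form (Proposition~\ref{first_equivalent_characterisations}); in particular no Bianchi identity is needed. By part~(2) of Lemma~\ref{second_order_derivatives_sigma}, $\lVert(\nabla_XJ)Y\rVert^2=\nabla^2\sigma(X,X,JY,Y)$, so everything reduces to rewriting the right-hand side in terms of the Riemannian curvature. Since $\nabla^2\sigma(W,X,Y,Z)=(\nabla_W(\nabla\sigma))(X,Y,Z)$ and $\nabla\sigma$ is a three-form, the tensor $\nabla^2\sigma$ is alternating in its last three arguments; I would first use this to move one of the two $X$'s out of the first two slots by transposing the entries in the second and fourth slots, obtaining
\[
\nabla^2\sigma(X,X,JY,Y)=-\nabla^2\sigma(X,Y,JY,X).
\]

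Next I would apply part~(1) of Lemma~\ref{second_order_derivatives_sigma} to exchange the first two arguments on the right, which gives
\[
\nabla^2\sigma(X,Y,JY,X)=\nabla^2\sigma(Y,X,JY,X)+\sigma(R(Y,X)JY,X)+\sigma(JY,R(Y,X)X).
\]
The role of the first transposition now becomes clear: in $\nabla^2\sigma(Y,X,JY,X)$ the entry $X$ occurs twice among the last three slots, where $\nabla^2\sigma$ is alternating, so this term is zero. Combining the two displays,
\[
\lVert(\nabla_XJ)Y\rVert^2=-\sigma(R(Y,X)JY,X)-\sigma(JY,R(Y,X)X).
\]

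It remains to rewrite the two contracted curvature terms in terms of the $(4,0)$-tensor $R$: using $\sigma(V,W)=g(JV,W)=-g(V,JW)$ together with $J^2=-\id$ one finds $\sigma(R(Y,X)JY,X)=-R(Y,X,JY,JX)$ and $\sigma(JY,R(Y,X)X)=-R(Y,X,X,Y)$, whence $\lVert(\nabla_XJ)Y\rVert^2=R(Y,X,JY,JX)+R(Y,X,X,Y)$; the antisymmetry of $R$ in each of its two pairs of arguments then turns this into $R(X,Y,JX,JY)-R(X,Y,X,Y)$, which is \eqref{norm_nabla_j}. I expect the main obstacle to be simply keeping track of the permutations: one must choose the transpositions so that the extra $\nabla^2\sigma$-term produced by Lemma~\ref{second_order_derivatives_sigma}(1) is forced to carry a repeated argument among its last three slots (hence vanishes) and so that the two surviving curvature terms assemble into exactly the right-hand side of \eqref{norm_nabla_j}; less careful choices leave curvature combinations involving terms such as $R(JY,X,X,JY)$, which cannot be brought to that form by the symmetries of $R$ alone.
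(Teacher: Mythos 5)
Your proof is correct and follows essentially the same route as the paper: apply Lemma~\ref{second_order_derivatives_sigma}(2), use the skew-symmetry of $\nabla^2\sigma$ in its last three slots to arrange that the commutator formula of Lemma~\ref{second_order_derivatives_sigma}(1) produces only a vanishing $\nabla^2\sigma$-term (one with a repeated argument among the last three slots), and then convert the resulting $\sigma(R(\cdot,\cdot)\cdot,\cdot)$ expressions into the $(4,0)$-curvature tensor. The only difference is the particular transposition chosen before invoking part~(1), which is immaterial; all signs in your computation check out.
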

\begin{proof}
Since $\nabla\sigma$ is a three-form, $\nabla^2\sigma(A,B,B,C)=0$. We can then combine Lemma \ref{covariant_derivative_sigma_etc} and the results found in Lemma \ref{second_order_derivatives_sigma} to get
\begin{align*}
\lVert (\nabla_X J)Y \rVert^2 & = \lVert (\nabla_X J)JY \rVert^2 = -\nabla^2 \sigma(X,X,Y,JY) \\
& = \nabla^2 \sigma(X,Y,X,JY)-\nabla^2\sigma(Y,X,X,JY) \\
& = \sigma(R(Y,X)X,JY)+\sigma(X,R(Y,X)JY) \\
& = g(R(Y,X)X,Y)-g(R(Y,X)JX,JY) \\
& = R(X,Y,JX,JY)-R(X,Y,X,Y),
\end{align*}
which was our claim.
\end{proof}

\begin{remark}
Formula \eqref{norm_nabla_j} gives a way to calculate the norm of $(\nabla_XJ)Y$---hence the function $\mu$ in \nolinebreak \eqref{mu_local_to_global}---in terms of the curvature tensor. A remarkable consequence of it is that $R$ is invariant under the action of $J$. To see this, define the tensor $S(W,X,Y,Z) \coloneqq R(JW,JX,JY,JZ)$. Of course $S$ inherites the properties of algebraic curvature tensors, namely $S \in \Lambda^2 \otimes \Lambda^2$ satisfies the first Bianchi identity.
To show $R = S$ we can then check $R(X,Y,Y,X) = S(X,Y,Y,X)$. By formula \eqref{covariant_derivative_j} we have $\lVert (\nabla_{JX}J)JY\rVert = \lVert (\nabla_XJ)Y\rVert$. A straightforward calculation then proves the claim:
\begin{align*}
R(JX,JY,JY,JX) - R(X,Y,Y,X)& = R(JX,JY,JY,JX)-R(X,Y,JY,JX)\\
& \qquad +R(X,Y,JY,JX)-R(X,Y,Y,X) \\
& =  \lVert (\nabla_{JX}J)JY\rVert^2 -\lVert (\nabla_XJ)Y\rVert^2= 0.
\end{align*}
\end{remark}
The identity just obtained allows us to carry out a polarisation process giving a way to measure inner products of vectors of the form $(\nabla_X J)Y$ in terms of the curvature. 
We work out all the details of the next essential result.
\begin{lemma}
For every quadruple of vector fields $W,X,Y,Z$ on $M$ we have the formula
\begin{equation}
\label{inner_products}
g((\nabla_WJ)X,(\nabla_YJ)Z) = R(W,X,JY,JZ)-R(W,X,Y,Z).
\end{equation}
\end{lemma}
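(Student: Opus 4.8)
The plan is to obtain \eqref{inner_products} by polarising the quadratic identity \eqref{norm_nabla_j} twice, and then to remove the ambiguity that such a polarisation inevitably carries by a short computation in a $J$-adapted frame. First I would polarise \eqref{norm_nabla_j} in its second argument: replacing $Y$ by $Y+Z$ and subtracting the two pure terms gives $2g((\nabla_XJ)Y,(\nabla_XJ)Z)=R(X,Y,JX,JZ)+R(X,Z,JX,JY)-R(X,Y,X,Z)-R(X,Z,X,Y)$, and the pair symmetry of $R$ together with its $J$-invariance $R(JW,JX,JY,JZ)=R(W,X,Y,Z)$ (just established above) collapses the right-hand side to $2R(X,Y,JX,JZ)-2R(X,Y,X,Z)$. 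Then I would polarise the resulting identity
\begin{equation*}
g((\nabla_XJ)Y,(\nabla_XJ)Z)=R(X,Y,JX,JZ)-R(X,Y,X,Z)
\end{equation*}
in $X$ (substitute $X\mapsto X+W$, cancel the pure terms), arriving at
\begin{equation*}
g((\nabla_XJ)Y,(\nabla_WJ)Z)+g((\nabla_WJ)Y,(\nabla_XJ)Z)=R(X,Y,JW,JZ)+R(W,Y,JX,JZ)-R(X,Y,W,Z)-R(W,Y,X,Z).
\end{equation*}

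Next I would set $E(W,X,Y,Z):=g((\nabla_WJ)X,(\nabla_YJ)Z)-R(W,X,JY,JZ)+R(W,X,Y,Z)$, the defect of \eqref{inner_products}, and aim to prove $E\equiv0$. Polarising the nearly Kähler identity $(\nabla_XJ)X=0$ gives $(\nabla_WJ)X=-(\nabla_XJ)W$, so $E$ is skew in its first pair of arguments; it is also skew in its last pair, since both $R(W,X,JY,JZ)$ and $R(W,X,Y,Z)$ are ($J$ being injective). The symmetrised identity displayed above reads precisely $E(X,Y,W,Z)+E(W,Y,X,Z)=0$, i.e.\ $E$ is skew under interchanging its first and third entries. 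As the transpositions $(1\,2)$, $(3\,4)$ and $(1\,3)$ generate $S_4$, this forces $E$ to be totally antisymmetric, so at each point $E$ lies in $\Lambda^4$.

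To kill this $4$-form I would evaluate it on quadruples drawn from a $J$-adapted orthonormal frame $\{E_i,JE_i\}_{i=1,2,3}$, as produced in the proof of Lemma \ref{definition_mu}: four of these six vectors cannot be chosen without taking both vectors of some plane $\langle E_i,JE_i\rangle$, so by total antisymmetry it suffices to show $E(E_i,JE_i,A,B)=0$ for all $A,B$. The first term of $E$ then vanishes, because $\nabla J$ anticommutes with $J$ and $(\nabla_{E_i}J)E_i=0$, whence $(\nabla_{E_i}J)JE_i=-J(\nabla_{E_i}J)E_i=0$; and the two curvature terms cancel, because applying the $J$-invariance of $R$ to all four entries and then the skew-symmetry in the first pair yields $R(E_i,JE_i,JA,JB)=R(E_i,JE_i,A,B)$. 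Hence $E=0$, which is \eqref{inner_products}.

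The double polarisation is entirely routine; the step I expect to be delicate — and precisely the one that is tempting to skip — is the observation that polarisation on its own produces only the \emph{symmetrised} identity and therefore leaves open a totally antisymmetric, a priori nonzero, correction living in $\Lambda^4$. The crux of the argument is the adapted-frame computation that disposes of this correction, and it is noteworthy that it uses only the two structural facts already in hand: the nearly Kähler condition $(\nabla_XJ)X=0$ and the invariance of the curvature tensor under $J$.
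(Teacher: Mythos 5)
Your argument is correct in dimension six and reaches \eqref{inner_products} by a genuinely different route from the paper. Both proofs begin with the same double polarisation of \eqref{norm_nabla_j}, which can only ever produce the symmetrised identity; the difference is in how the symmetrisation is undone. The paper introduces $L(A,B,C,D)=R(A,B,C,D)+R(A,D,C,B)$ and runs the first Bianchi identity through a chain of $J$-substitutions (formulas \eqref{formula_1} and \eqref{intermediate_step}) until the cross term $g((\nabla_AJ)B,(\nabla_CJ)D)$ is isolated. You instead package the discrepancy into the defect tensor $E$, note that it is skew under the transpositions $(1\,2)$, $(3\,4)$ and $(1\,3)$ --- the last being exactly the content of the polarised identity --- hence totally antisymmetric, and then annihilate the resulting four-form on a $J$-adapted frame using only $(\nabla_{E_i}J)JE_i=0$ and the $J$-invariance of $R$. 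All the individual steps check out: those three transpositions do generate $S_4$, precomposition by a product of sign-reversing transpositions reverses sign according to the signature, and $R(E_i,JE_i,JA,JB)=R(E_i,JE_i,A,B)$ follows as you say. Your version is shorter, and it makes explicit the point the paper's computation obscures, namely that polarisation leaves undetermined precisely a $\Lambda^4$-component.

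The one thing to flag is generality. The paper states and proves this lemma in dimension $2n$ (note the sentence ``We still work in dimension $2n$, switching to dimension six in Proposition \ref{mu_constant}'' shortly afterwards), and the identity is then used at that level of generality in \eqref{difference_ricci}, \eqref{nabla_curvature} and \eqref{characteristic_curvature}. Your pigeonhole --- any four frame vectors must exhaust some $J$-invariant plane --- is special to $n\le 3$: in dimension eight one may take $E_1,E_2,E_3,E_4$ from four distinct planes, and the adapted-frame computation then says nothing about $E(E_1,E_2,E_3,E_4)$. So your proof establishes the lemma only for six-manifolds. That is all that is ultimately needed for Theorem \ref{einstein_condition}, but if one wants the statement in arbitrary even dimension, as the paper uses it before specialising, the Bianchi-identity route (or some other dimension-free argument) is required.
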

\begin{proof}
Mapping $X \mapsto A+B$ in formula \eqref{norm_nabla_j} one has
\begin{align*}
\lVert (\nabla_{A+B}J)Y\rVert^2 & = R(A+B,Y,JA+JB,JY)-R(A+B,Y,A+B,Y) \\
& = R(A,Y,JA,JY)-R(A,Y,A,Y) + R(B,Y,JB,JY)-R(B,Y,B,Y) \\
& \qquad + R(A,Y,JB,JY)-R(A,Y,B,Y)+ R(B,Y,JA,JY)-R(B,Y,A,Y).
\end{align*}
The left hand side is
\begin{equation*}
\lVert (\nabla_{A+B}J)Y\rVert^2 = \lVert (\nabla_A J)Y\rVert^2 + \lVert(\nabla_BJ)Y\rVert^2+2g((\nabla_AJ)Y,(\nabla_BJ)Y),
\end{equation*}
so applying once again \eqref{norm_nabla_j} we find
\begin{align*}
2g((\nabla_AJ)Y,(\nabla_BJ)Y) & = R(A,Y,JB,JY)-R(A,Y,B,Y) \\
& \qquad + R(B,Y,JA,JY)-R(B,Y,A,Y).
\end{align*}
Putting now $Y \mapsto C+D$, we expand $2g((\nabla_AJ)(C+D),(\nabla_BJ)(C+D))$ and obtain the expression
\begin{align*}
& R(A,C,JB,JC)-R(A,C,B,C) + R(A,C,JB,JD)-R(A,C,B,D) \\
& \qquad + R(A,D,JB,JC) - R(A,D,B,C)+ R(A,D,JB,JD) - R(A,D,B,D) \\
& \qquad + R(B,C,JA,JC) - R(B,C,A,C) + R(B,C,JA,JD) - R(B,C,A,D) \\
& \qquad + R(B,D,JA,JC) - R(B,D,A,C)+ R(B,D,JA,JD) - R(B,D,A,D). 
\end{align*}
Linearity in the various arguments implies
\begin{align*}
\MoveEqLeft
2g((\nabla_AJ)(C+D),(\nabla_BJ)(C+D)) \\
& = 2\bigl(g((\nabla_AJ)C,(\nabla_BJ)C)+g((\nabla_AJ)C,(\nabla_BJ)D) \\
& \qquad + g((\nabla_AJ)D,(\nabla_BJ)C) + g((\nabla_AJ)D,(\nabla_BJ)D)\bigr).
\end{align*}
Simplifying we are left with
\begin{align}
\label{step}
\MoveEqLeft
g((\nabla_AJ)C,(\nabla_BJ)D) +g((\nabla_AJ)D,(\nabla_BJ)C ) \nonumber \\
& = R(A,C,JB,JD)-R(A,C,B,D) + R(A,D,JB,JC)-R(A,D,B,C).
\end{align}
Set $L(A,B,C,D) \coloneqq R(A,B,C,D)+R(A,D,C,B)$. The first Bianchi identity, together with \nolinebreak \eqref{step}, gives
\begin{align}
\label{formula_1}
0 & = R(A,B,C,D)+R(B,C,A,D)+R(C,A,B,D) \nonumber\\
& = R(A,B,C,D)-R(C,B,A,D)+\bigl(L(C,A,B,D)-R(C,D,B,A)\bigr) \nonumber\\
& = R(A,B,C,D)+\bigl(L(C,A,B,D)+R(A,B,C,D)\bigr) -\bigl(L(C,B,A,D)-R(C,D,A,B)\bigr) \nonumber\\
& = 3R(A,B,C,D)+L(C,A,B,D)-L(C,B,A,D) \nonumber\\
& = 3R(A,B,C,D)+\bigl(R(C,A,B,D)+R(C,D,B,A)\bigr)-\bigl(R(C,B,A,D)+R(C,D,A,B)\bigr) \nonumber\\
& = 3R(A,B,C,D)-2R(C,D,JA,JB) +R(C,A,JB,JD)-R(C,B,JA,JD) \nonumber \\
& \qquad +2g((\nabla_CJ)D,(\nabla_AJ)B)-g((\nabla_CJ)A,(\nabla_BJ)D)+g((\nabla_CJ)B,(\nabla_AJ)D).
\end{align}
Now we set $C \mapsto JC, D \mapsto JD$:
\begin{align*}
0 & = 3R(A,B,JC,JD)-2R(JC,JD,JA,JB) -R(JC,A,JB,D)+R(JC,B,JA,D) \\
& \qquad +2g((\nabla_{JC}J)JD,(\nabla_AJ)B)-g((\nabla_{JC}J)A,(\nabla_BJ)JD) +g((\nabla_{JC}J)B,(\nabla_AJ)JD).
\end{align*}
Using that $R$ is $J$-invariant, the difference between the latter and \eqref{formula_1} becomes
\begin{align*}
0 & = 3R(A,B,JC,JD)-2R(JC,JD,JA,JB) -R(JC,A,JB,D)+R(JC,B,JA,D) \\
& \qquad +2g((\nabla_{JC}J)JD,(\nabla_AJ)B)-g((\nabla_{JC}J)A,(\nabla_BJ)JD) +g((\nabla_{JC}J)B,(\nabla_AJ)JD) \\
& \qquad - 3R(A,B,C,D)+2R(C,D,JA,JB) -R(C,A,JB,JD)+R(C,B,JA,JD) \\
& \qquad -2g((\nabla_CJ)D,(\nabla_AJ)B)+g((\nabla_CJ)A,(\nabla_BJ)D)-g((\nabla_CJ)B,(\nabla_AJ)D) \\[4pt]
& = 5R(A,B,JC,JD)-5R(A,B,C,D) -R(A,C,JD,JB)-R(A,JC,D,JB)\\
& \qquad -R(A,D,JB,JC)-R(A,JD,JB,C)-4g((\nabla_AJ)B,(\nabla_CJ)D).
\end{align*}
Applying the first Bianchi identity once again we have
\begin{align}
\label{intermediate_step}
4g((\nabla_AJ)B,(\nabla_CJ)D) & = 5R(A,B,JC,JD)-5R(A,B,C,D) \nonumber \\
& \qquad + R(A,JB,C,JD)+R(A,JB,JC,D).
\end{align}
Now map $B \mapsto JB, C \mapsto JC$ and add a fifth of the result to \eqref{intermediate_step}: 
\begin{align*}
\tfrac{24}{5}g((\nabla_AJ)JB,(\nabla_{JC}J)D) & = -R(A,JB,C,JD)-R(A,JB,JC,D) \\
& \qquad -\tfrac{1}{5}R(A,B,JC,JD)+\tfrac{1}{5}R(A,B,C,D) \\
& \qquad +5R(A,B,JC,JD)-5R(A,B,C,D) \\
& \qquad + R(A,JB,C,JD)+R(A,JB,JC,D) \\
& = \tfrac{24}{5}R(A,B,JC,JD)-\tfrac{24}{5}R(A,B,C,D).
\end{align*}
By identity \eqref{covariant_derivative_j} one has $g((\nabla_AJ)JB,(\nabla_{JC}J)D) = g((\nabla_AJ)B,(\nabla_CJ)D)$, and we are done.
\end{proof}

\begin{lemma}
Let $W,X,Y,Z \in \mathfrak{X}(M)$. The following formula holds:
\begin{equation}
\label{cyclic_sum_sigma}
2\nabla^2\sigma(W,X,Y,Z) = -\cyclicsum_{ X,Y,Z} g((\nabla_WJ)X,(\nabla_YJ)JZ).
\end{equation}
\end{lemma}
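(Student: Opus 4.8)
The idea is to determine $\nabla^2\sigma$ completely in terms of the curvature, using two ingredients: that $\nabla\sigma$ is a closed three-form, and the commutation identity of Lemma \ref{second_order_derivatives_sigma}(1). Write $T \coloneqq \nabla^2\sigma$. Since $M$ is nearly K\"ahler, $\nabla\sigma$ is a three-form (Remark \ref{remark_1}), so $T$ is skew-symmetric in its last three arguments; moreover Proposition \ref{first_equivalent_characterisations} gives $\nabla\sigma = \tfrac13 d\sigma$, whence $d(\nabla\sigma) = \tfrac13 d(d\sigma) = 0$. Expanding this by the usual formula for the exterior derivative of a three-form yields
\[
T(W,X,Y,Z) = T(X,W,Y,Z) - T(Y,W,X,Z) + T(Z,W,X,Y).
\]

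I would then transpose the first two entries of each term on the right using the first identity of Lemma \ref{second_order_derivatives_sigma}, each transposition introducing the curvature correction $\sigma(R(\cdot,\cdot)\cdot,\cdot) + \sigma(\cdot,R(\cdot,\cdot)\cdot)$ that this lemma supplies, and afterwards use the skew-symmetry of $T$ in its last three slots to rewrite $T(W,Y,X,Z) = -T(W,X,Y,Z)$ and $T(W,Z,X,Y) = T(W,X,Y,Z)$. The three resulting copies of $T(W,X,Y,Z)$ add up to $3T(W,X,Y,Z)$; moving them to the left gives
\[
2T(W,X,Y,Z) = \Sigma_1 - \Sigma_2 + \Sigma_3,
\]
where $\Sigma_1 = \sigma(R(X,W)Y,Z) + \sigma(Y,R(X,W)Z)$ and $\Sigma_2,\Sigma_3$ are the curvature terms coming from the second and third summands.

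It remains to recognise the right-hand side. First I would convert each $\Sigma_i$ into the Riemannian $(4,0)$-tensor by sliding $J$ and the skew-adjoint operators $R(\cdot,\cdot)$ through the metric, e.g.\ $\sigma(R(X,W)Y,Z) = g(JR(X,W)Y,Z) = -g(R(X,W)Y,JZ) = R(W,X,Y,JZ)$ and, in the same way, $\sigma(Y,R(X,W)Z) = R(W,X,JY,Z)$. Using also the antisymmetry of $R$ in its last pair of arguments, the alternating sum $\Sigma_1 - \Sigma_2 + \Sigma_3$ collapses to the manifestly cyclic expression $\cyclicsum_{X,Y,Z}\bigl(R(W,X,Y,JZ) + R(W,X,JY,Z)\bigr)$. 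On the other hand, formula \eqref{inner_products} together with $J^2 = -\id$ gives $g((\nabla_WJ)X,(\nabla_YJ)JZ) = -R(W,X,JY,Z) - R(W,X,Y,JZ)$, so this last expression equals $-\cyclicsum_{X,Y,Z}g((\nabla_WJ)X,(\nabla_YJ)JZ)$, and \eqref{cyclic_sum_sigma} follows.

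The argument is mostly sign bookkeeping; the one substantial input is $d(\nabla\sigma) = 0$, since Lemma \ref{second_order_derivatives_sigma}(1) by itself controls only the part of $\nabla^2\sigma$ antisymmetric in its first two slots. The steps to watch are the sign in the exterior-derivative identity and, above all, the passage between the $\sigma$-curvature terms $\Sigma_i$ and the tensor $R$, where one misplaced sign propagates through all three cyclic summands.
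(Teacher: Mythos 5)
Your argument is correct, and it reaches \eqref{cyclic_sum_sigma} by a genuinely different route from the one in the paper. The paper splits $\nabla^2\sigma$ into its antisymmetric and symmetric parts in the first two slots: the antisymmetric part is the Ricci identity of Lemma \ref{second_order_derivatives_sigma} rewritten via \eqref{inner_products} as in \eqref{difference_covariant_derivatives_sigma_one}, while the symmetric part is obtained by first computing the diagonal $\nabla^2\sigma(W,W,Y,Z)$ (using that $\nabla\sigma$ is a three-form to move the repeated argument, then applying the antisymmetric formula again) and then polarising, as in \eqref{difference_covariant_derivatives_sigma_two}; adding the two pieces gives the cyclic sum. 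You instead take as input the closedness $d(\nabla\sigma)=\tfrac13\,d^2\sigma=0$, granted by Proposition \ref{first_equivalent_characterisations}, expand it with the torsion-free connection to get
\begin{equation*}
T(W,X,Y,Z)=T(X,W,Y,Z)-T(Y,W,X,Z)+T(Z,W,X,Y),
\end{equation*}
and then commute the first two slots of each summand with the Ricci identity before restoring $T(W,X,Y,Z)$ via the skew-symmetry in the last three slots. I checked the sign bookkeeping: the three curvature corrections do come out as $\Sigma_1-\Sigma_2+\Sigma_3=\cyclicsum_{X,Y,Z}\bigl(R(W,X,Y,JZ)+R(W,X,JY,Z)\bigr)$, and \eqref{inner_products} with $Z\mapsto JZ$ indeed gives $g((\nabla_WJ)X,(\nabla_YJ)JZ)=-R(W,X,JY,Z)-R(W,X,Y,JZ)$, so the conclusion follows. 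What your approach buys is the avoidance of the diagonal-plus-polarisation step: the identity $d(\nabla\sigma)=0$ supplies in one stroke the information that the paper extracts from the symmetric part, at the cost of invoking the intrinsic formula for the exterior derivative of a three-form. Both proofs ultimately rest on the same two facts---the skew-symmetry of $\nabla\sigma$ and the curvature identity \eqref{inner_products}---so neither is more general, but yours is a clean alternative.
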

\begin{proof}
Combine the first formula in Lemma \ref{second_order_derivatives_sigma} and identity \eqref{inner_products}:
\begin{align}
\label{difference_covariant_derivatives_sigma_one}
\nabla^2\sigma(W,X,Y,Z)-\nabla^2\sigma(X,W,Y,Z) & = \sigma(R(X,W)Y,Z)+\sigma(Y,R(X,W)Z) \nonumber \\
& = g(JR(X,W)Y,Z)+g(JY,R(X,W)Z) \nonumber \\
& = g(JR(X,W)Y,Z)-g(R(X,W)JY,Z) \nonumber \\
& = R(X,W,JY,J^2Z)-R(X,W,Y,JZ) \nonumber \\
& = g((\nabla_XJ)W,(\nabla_YJ)JZ).
\end{align}
On the other hand, using \eqref{difference_covariant_derivatives_sigma_one} 
\begin{align*}
\nabla^2\sigma(W,W,Y,Z) & = -\nabla^2\sigma(W,Y,W,Z) \\
& = \nabla^2\sigma(Y,W,W,Z)-\nabla^2\sigma(W,Y,W,Z) \\
& = g((\nabla_WJ)Y,(\nabla_WJ)JZ),
\end{align*}
Polarising the latter, one obtains
\begin{align*}
\nabla^2\sigma(W+X,W+X,Y,Z) & = \nabla^2\sigma(W,W,Y,Z)+\nabla^2\sigma(W,X,Y,Z)\\
& \qquad +\nabla^2\sigma(X,W,Y,Z)+\nabla^2\sigma(X,X,Y,Z) \\
& = g((\nabla_WJ)Y,(\nabla_WJ)JZ)+g((\nabla_XJ)Y,(\nabla_XJ)JZ) \\
& \qquad + \nabla^2\sigma(W,X,Y,Z)+\nabla^2\sigma(X,W,Y,Z),
\end{align*}
whence
\begin{align}
\label{difference_covariant_derivatives_sigma_two}
\MoveEqLeft
\nabla^2\sigma(W,X,Y,Z)+\nabla^2\sigma(X,W,Y,Z) \nonumber \\
& = -g((\nabla_WJ)Y,(\nabla_WJ)JZ)-g((\nabla_XJ)Y,(\nabla_XJ)JZ) +g((\nabla_{W+X}J)Y,(\nabla_{W+X}J)JZ) \nonumber \\
& = g((\nabla_WJ)Y,(\nabla_XJ)JZ)+g((\nabla_XJ)Y,(\nabla_WJ)JZ).
\end{align}
Adding \eqref{difference_covariant_derivatives_sigma_one} to \eqref{difference_covariant_derivatives_sigma_two} and using usual symmetries of $\nabla J$ the claim follows.
\end{proof}

Now we define the Ricci and the Ricci-${*}$ endomorphisms. We still work in dimension $2n$, switching to dimension six in Proposition \ref{mu_constant}.
\begin{definition}
\label{ricci_and_ricci_star}
Given any local, orthonormal frame $E_1,\dots,E_{2n}$, the Ricci and the Ricci-${*}$ endomorphisms $\Ric, \Ricc \in \Lambda^1 \otimes  \Lambda^1$ are given by 
\begin{equation*}
g(\Ric X,Y) \coloneqq \sum_{ i=1}^{ 2n} R(X,E_i,E_i,Y), \quad g(\Ricc X,Y) \coloneqq \sum_{ i=1}^{2n} R(X,E_i,JE_i,JY).
\end{equation*}
\end{definition}
Because of \eqref{inner_products} we can write their difference as
\begin{equation}
\label{difference_ricci}
g((\Ric-\Ricc)X,Y) = \sum_{ i=1}^{2n}g((\nabla_XJ)E_i,(\nabla_YJ)E_i).
\end{equation}
Obviously $\Ric-\Ricc$ is self-adjoint, and so is its covariant derivative. Moreover, $\Ric-\Ricc$ and $J$ commute: set $A \coloneqq \Ric-\Ricc$ and apply formula \eqref{covariant_derivative_j}, so that
\begin{align*}
g(JAX,Y) & = -g(AX,JY) \\
& = -\sum_{ i} g((\nabla_XJ)E_i,(\nabla_{JY}J)E_i) = -\sum_{ i} g(J(\nabla_XJ)E_i,(\nabla_YJ)E_i) \\
& = \sum_{ i} g((\nabla_{JX}J)E_i,(\nabla_YJ)E_i) = g(AJX,Y).
\end{align*}
We can then prove a last useful result.
\begin{lemma}
For $X,Y,Z \in \mathfrak{X}(M)$ we have the following formula:
\begin{align}
\label{nabla_curvature}
2g((\nabla_Z (\Ric-\Ricc))X,Y) & = g((\Ric-\Ricc)JX,(\nabla_ZJ)Y) \nonumber\\
&\qquad +g((\Ric-\Ricc)JY,(\nabla_ZJ)X).
\end{align}
\end{lemma}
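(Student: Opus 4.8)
The plan is to differentiate the polarised identity \eqref{difference_ricci} covariantly, then symmetrise. Write $A \coloneqq \Ric - \Ricc$, and recall that $A$ is self-adjoint and commutes with $J$. Fix a point $p$ and choose the local orthonormal frame $\{E_i\}$ to be \emph{synchronous} at $p$, i.e.\ $\nabla E_i|_p = 0$; this lets me drop all terms involving $\nabla_Z E_i$ when evaluating at $p$, and since $p$ is arbitrary the resulting pointwise identity holds everywhere. Applying $\nabla_Z$ to $g(AX,Y) = \sum_i g((\nabla_XJ)E_i,(\nabla_YJ)E_i)$ and using that $g$ and the frame are parallel at $p$, the left-hand side becomes $g((\nabla_Z A)X,Y) + g(A\nabla_ZX,Y) + g(AX,\nabla_ZY)$, while the right-hand side produces $\sum_i g((\nabla_Z\nabla_XJ)E_i,(\nabla_YJ)E_i)$ together with the two analogous terms coming from $\nabla_ZX$, $\nabla_ZY$, and a term $\sum_i g((\nabla_{\nabla_ZX}J)E_i,\dots)$-type piece. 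The terms that merely reshuffle $\nabla_ZX$ and $\nabla_ZY$ cancel against the corresponding terms on the left (again via \eqref{difference_ricci} applied to the pairs $(\nabla_ZX,Y)$ and $(X,\nabla_ZY)$), leaving
\begin{equation*}
g((\nabla_Z A)X,Y) = \sum_i g((\nabla_Z\nabla_XJ)E_i,(\nabla_YJ)E_i) + \sum_i g((\nabla_XJ)E_i,(\nabla_Z\nabla_YJ)E_i).
\end{equation*}

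Next I would convert the second covariant derivatives of $J$ appearing here into first covariant derivatives of $J$ composed with $A$. The key is the identity, already used in the proof of Lemma \ref{second_order_derivatives_sigma}, that $\nabla_W(\nabla_XJ) - \nabla_{\nabla_WX}J = (\nabla_W\nabla_X - \nabla_{\nabla_WX})J$ agrees, after antisymmetrising in $W$ and $X$, with $[R(W,X),J]$; more useful here is that $\sum_i g((\nabla_Z\nabla_XJ)E_i,(\nabla_YJ)E_i)$ can be related back to the curvature via \eqref{inner_products} and \eqref{difference_ricci}. Concretely, I expect that contracting \eqref{inner_products} in the form $g((\nabla_ZJ)E_i,(\nabla_?J)E_i)$ and differentiating, or alternatively invoking formula \eqref{cyclic_sum_sigma} which expresses $\nabla^2\sigma$ purely in terms of inner products of $\nabla J$-vectors, lets me rewrite each sum $\sum_i g((\nabla_Z\nabla_XJ)E_i,(\nabla_YJ)E_i)$ as $\tfrac12 g(AJX,(\nabla_ZJ)Y)$ up to symmetric corrections. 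Summing the two contributions and using that $A$ commutes with $J$ (so $AJ = JA$ and $g(AJX,(\nabla_ZJ)Y)$ is symmetric in the right way) should collapse everything to the right-hand side of \eqref{nabla_curvature}, with the factor $2$ on the left accounting for the two terms.

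The main obstacle is the second paragraph: faithfully tracking how a second covariant derivative of $J$, contracted against $\nabla J$ over an orthonormal frame, reorganises into $A$ composed with a single $\nabla J$. This requires using the second Bianchi identity (differential Bianchi) together with the algebraic symmetries of $R$ and the $J$-invariance of $R$ established in the remark after Lemma \ref{important_corollary}, and it is exactly the kind of step where sign errors and mismatched contractions creep in. A cleaner route may be to bypass $\nabla^2 J$ entirely: start instead from the identity $\nabla_Z\bigl(g((\nabla_XJ)E_i,(\nabla_YJ)E_i)\bigr)$, apply the Leibniz rule, and immediately use the skew-symmetry relations \eqref{covariant_derivative_j} and \eqref{moving_j_around_eq} to move one $J$ off the differentiated factor and onto an undifferentiated one, so that only the combination $(\nabla_ZJ)$ acting on $X$ or $Y$ survives after re-summing over $i$ via \eqref{difference_ricci}. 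I would pursue that second route first, since it keeps everything within the algebra of first derivatives of $J$ where the symmetries of Lemma \ref{covariant_derivative_sigma_etc} do all the work, and fall back on the curvature computation only if a stubborn term refuses to cancel.
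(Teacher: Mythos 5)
Your skeleton matches the paper's: differentiate \eqref{difference_ricci}, dispose of the frame-derivative terms (the paper does this by showing $\sum_i g((\nabla_XJ)\nabla_ZE_i,(\nabla_XJ)E_i)=0$ from skew-symmetry of the connection coefficients, which is equivalent to your synchronous-frame choice), then convert the surviving second derivatives of $J$ into first derivatives composed with $A=\Ric-\Ricc$. But the entire content of the lemma lives in the step you leave as ``I expect'': the reduction of $\sum_i g((\nabla^2_{Z,X}J)E_i,(\nabla_YJ)E_i)$ to $\tfrac12 g(AJX,(\nabla_ZJ)Y)$ plus its mirror. In the paper this is done by recognising the sum as $\sum_i\nabla^2\sigma(Z,X,\cdot,\cdot)$ contracted with $(\nabla J)E_i$ and applying \eqref{cyclic_sum_sigma}, which produces \emph{three} terms. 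Two of them must then be shown to vanish --- one because $\nabla\sigma\in[\![\Lambda^{3,0}]\!]$ via \eqref{moving_j_around_eq}, the other because $C\coloneqq J(\nabla_ZJ)$ is skew-adjoint and is being traced against the \emph{symmetric} bilinear form $(U,V)\mapsto\sum_i g((\nabla_XJ)U,(\nabla_XJ)V)$ --- and the third is identified with $g(AJX,(\nabla_ZJ)X)$ using $AJ=JA$ and \eqref{covariant_derivative_j}. None of this is in your proposal, and it is precisely where the sign errors and mismatched contractions you worry about would appear. Note also that the second vanishing argument uses symmetry in $i,j$ of $g((\nabla_XJ)E_j,(\nabla_XJ)E_i)$, which fails for the polarised quantity $g((\nabla_XJ)E_j,(\nabla_YJ)E_i)$ with $X\neq Y$; this is why the paper works on the diagonal $X=Y$ first and polarises only at the very end, whereas your fully polarised version would leave you with an extra term that does not obviously cancel.

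The more serious issue is that the route you say you would pursue \emph{first} --- bypassing $\nabla^2J$ entirely and staying ``within the algebra of first derivatives of $J$'' --- cannot work. Differentiating $g((\nabla_XJ)E_i,(\nabla_YJ)E_i)$ in the direction $Z$ unavoidably produces $\nabla^2_{Z,X}J$ and $\nabla^2_{Z,Y}J$, and no symmetry from Lemma \ref{covariant_derivative_sigma_etc} expresses a second covariant derivative of $J$ in terms of first derivatives; that conversion is exactly the content of \eqref{cyclic_sum_sigma}, whose proof already encodes the curvature identity \eqref{inner_products} and the first Bianchi identity. So your preferred route is a dead end, and your fallback route is the actual proof --- but in the form you give it, the decisive computation is asserted rather than carried out.
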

\begin{proof}
Start differentiating \eqref{difference_ricci} with $X=Y$, still with $A \coloneqq \Ric-\Ricc$:
\begin{align*}
g((\nabla_ZA)X,X)+2g(AX,\nabla_ZX) & = Z(g(AX,X)) \\
& = 2\sum_{ i=1}^{2n} g(\nabla_Z((\nabla_XJ)E_i),(\nabla_XJ)E_i).
\end{align*}
Rearranging the terms 
\begin{align}
\label{nabla_ric}
g((\nabla_Z A)X,X) = 2\sum_{ i=1}^{2n} g(\nabla_Z((\nabla_XJ)E_i),(\nabla_XJ)E_i)-g((\nabla_XJ)E_i,(\nabla_{\nabla_ZX}J)E_i).
\end{align}
Note that $\sum_{ i=1}^{2n}g((\nabla_XJ)\nabla_Z E_i,(\nabla_XJ)E_i)=0$: setting $\nabla_Z E_i = \sum_{ j=1}^{2n} B_i^jE_j$ we have 
\begin{equation*}
0 = Z(g(E_i,E_j)) = g(\nabla_ZE_i,E_j)+g(E_i,\nabla_ZE_j) = \sum_{ k} B_i^k \delta_{kj}+\sum_{ r} B_j^r\delta_{ir}= B_i^j+B_j^i.
\end{equation*}
Thus 
\begin{align*}
\sum_i g((\nabla_XJ)\nabla_Z E_i,(\nabla_XJ)E_i) & = \sum_{i,j} g((\nabla_XJ)B_i^jE_j,(\nabla_XJ)E_i) \\
& = -\sum_{i,j} g((\nabla_XJ)E_j,(\nabla_XJ)B_j^iE_i)\\
& = -\sum_j g((\nabla_XJ)E_j,(\nabla_XJ)\nabla_ZE_j)=0.
\end{align*}
This last term appears in the expansion of $\nabla^2 \sigma(Z,X,(\nabla_XJ)E_i,E_i)$ as well. Simplifying we get
\begin{align*}
\nabla^2 \sigma(Z,X,(\nabla_XJ)E_i,E_i) & = -Z(g((\nabla_XJ)E_i,(\nabla_XJ)E_i))-g((\nabla_{\nabla_ZX}J)(\nabla_XJ)E_i,E_i) \\
& \qquad -g((\nabla_XJ)\nabla_Z((\nabla_XJ)E_i),E_i)-g((\nabla_XJ)(\nabla_XJ)E_i,\nabla_ZE_i) \\[4pt]
& = -2g(\nabla_Z((\nabla_XJ)E_i),(\nabla_XJ)E_i)+g((\nabla_XJ)E_i,(\nabla_{\nabla_ZX}J)E_i)\\
& \qquad +g(\nabla_Z((\nabla_XJ)E_i),(\nabla_XJ)E_i)-g((\nabla_XJ)(\nabla_XJ)E_i,\nabla_ZE_i) \\[4pt]
& = g((\nabla_XJ)E_i,(\nabla_{\nabla_Z X}J)E_i)-g((\nabla_Z(\nabla_XJ))E_i,(\nabla_XJ)E_i) \\
& \qquad +g((\nabla_XJ)E_i,(\nabla_XJ)\nabla_ZE_i).
\end{align*}
Therefore, by formula \eqref{cyclic_sum_sigma}, identity \eqref{nabla_ric} becomes (all sums are over $i=1,\dots,2n$)
\begin{align*}
\MoveEqLeft 
g((\nabla_Z (\Ric-\Ricc))X,X) \\
& = 2\sum g(\nabla_Z((\nabla_XJ)E_i),(\nabla_XJ)E_i)-g((\nabla_XJ)E_i,(\nabla_{\nabla_ZX}J)E_i) \\[4pt]
& = - 2\sum \nabla^2\sigma(Z,X,(\nabla_XJ)E_i,E_i) \\[4pt]
& = \sum g((\nabla_ZJ)X,(\nabla_{(\nabla_XJ)E_i}J)JE_i) + g((\nabla_ZJ)(\nabla_XJ)E_i,(\nabla_{E_i}J)JX) \\
& \qquad + g((\nabla_ZJ)E_i,(\nabla_XJ)J(\nabla_XJ)E_i) \\[4pt]
& = \sum g((\nabla_{E_i}J)(\nabla_ZJ)X,J(\nabla_XJ)E_i)+ g((\nabla_ZJ)(\nabla_XJ)E_i,J(\nabla_XJ)E_i)\\
& \qquad +g((\nabla_XJ)(\nabla_ZJ)E_i,(\nabla_XJ)JE_i).
\end{align*}
The second term in the latter sum vanishes by \eqref{moving_j_around_eq}. The sum $\sum g((\nabla_XJ)(\nabla_ZJ)E_i,(\nabla_XJ)JE_i)$ vanishes as well. To see this, we set $C \coloneqq J(\nabla_ZJ)$. In the first place $C$ lies in $\mathfrak{so}(2n)$, because
\begin{equation*}
g(J(\nabla_ZJ)E_i,E_j) = g((\nabla_ZJ)JE_j,E_i) = -g(J(\nabla_ZJ)E_j,E_i).
\end{equation*}
Consequently, the following chain of identities leads to our claim (indices $i,j$ vary from $1$ to $2n$):
\begin{align*}
\sum g((\nabla_XJ)(\nabla_ZJ)E_i,(\nabla_XJ)JE_i) &= -\sum g((\nabla_XJ)J(\nabla_ZJ)E_i,(\nabla_XJ)E_i) \\
& = -\sum g((\nabla_XJ)C_i^jE_j,(\nabla_XJ)E_i) \\
& = \sum g((\nabla_XJ)E_j,(\nabla_XJ)C_j^iE_i) \\
& = \sum g((\nabla_XJ)J(\nabla_ZJ)E_j,(\nabla_XJ)E_j) = 0.
\end{align*}
We then go back to our first expansion recalling that $\Ric-\Ricc$ commutes with $J$.
\begin{align*}
-2\sum \nabla^2\sigma(Z,X,(\nabla_XJ)E_i,E_i) & = \sum g((\nabla_{E_i}J)(\nabla_ZJ)X,J(\nabla_XJ)E_i) \\ 
& = -\sum g((\nabla_{J(\nabla_ZJ)X}J)E_i,(\nabla_XJ)E_i) \\
& = -g((\Ric-\Ricc)J(\nabla_ZJ)X,X) \\
& = g((\Ric-\Ricc)JX,(\nabla_ZJ)X).
\end{align*}
Thus $g((\nabla_Z (\Ric-\Ricc))X,X) = g((\Ric-\Ricc)JX,(\nabla_ZJ)X)$. By polarisation and the symmetry of $\nabla_Z (\Ric-\Ricc)$ the result follows.
\end{proof}

Let us restrict to the six-dimensional case now, so $n=3$. Recall that in Lemma \ref{definition_mu} we proved the existence of a special function $\mu$ on $M$ satisfying \eqref{mu_local_to_global}. 
\begin{prop}
\label{mu_constant}
If $M$ is a nearly K\"ahler six-manifold, the function $\mu$ is constant.
\end{prop}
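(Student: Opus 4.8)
The plan is to pin down $\Ric-\Ricc$ explicitly in terms of $\mu$, substitute this into the differentiated identity \eqref{nabla_curvature}, and watch the right-hand side collapse to zero; connectedness of $M$ then finishes the job.

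\textbf{Step 1: showing $\Ric-\Ricc=4\mu^2\,\id$.} Fix a point $p\in M$ and a unit vector $X\in T_pM$, and complete $X$ to an adapted orthonormal frame $\{E_i,JE_i\}_{i=1,2,3}$ with $E_1=X$. Evaluating \eqref{difference_ricci} at $X=Y=E_1$ and summing over the six frame vectors, Lemma \ref{definition_mu} gives, for each frame vector $F$,
\[
g\bigl((\nabla_{E_1}J)F,(\nabla_{E_1}J)F\bigr)=\mu^2\bigl(1-g(E_1,F)^2-\sigma(E_1,F)^2\bigr);
\]
the bracket vanishes for $F\in\{E_1,JE_1\}$ and equals $1$ for the four remaining frame vectors, so $g((\Ric-\Ricc)X,X)=4\mu^2$. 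Since $\Ric-\Ricc$ is self-adjoint, polarisation yields $\Ric-\Ricc=4\mu^2\,\id$ as a field of endomorphisms. This is the one place where $\dim M=6$ is genuinely used: in dimension $2n$ the same count produces $(2n-2)\mu^2$. A useful by-product is that $\mu^2=\tfrac{1}{24}\tr(\Ric-\Ricc)$ is a smooth function, even though $\mu$ itself is so far only known to be continuous.

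\textbf{Step 2: differentiating and simplifying.} Since $\id$ is parallel, $\nabla_Z(\Ric-\Ricc)=4(Z\mu^2)\,\id$, so the left-hand side of \eqref{nabla_curvature} equals $8(Z\mu^2)\,g(X,Y)$. For the right-hand side, substitute $\Ric-\Ricc=4\mu^2\,\id$ and recall $\nabla\sigma(U,V,W)=g((\nabla_UJ)V,W)$ to rewrite it as $4\mu^2\bigl(\nabla\sigma(Z,Y,JX)+\nabla\sigma(Z,X,JY)\bigr)$. By \eqref{moving_j_around_eq} we may move $J$ from the third to the second slot, and then the skew-symmetry of the three-form $\nabla\sigma$ gives $\nabla\sigma(Z,Y,JX)=\nabla\sigma(Z,JY,X)=-\nabla\sigma(Z,X,JY)$, so the right-hand side vanishes identically. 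Hence $(Z\mu^2)\,g(X,Y)=0$ for all $X,Y,Z\in\mathfrak{X}(M)$; taking $X=Y\neq0$ forces $Z\mu^2=0$ for every vector field $Z$, so $\mu^2$ is locally constant, hence constant because $M$ is connected. Therefore $\mu$ is constant (and, a posteriori, smooth).

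The argument is short once \eqref{nabla_curvature} is available. The points that need care are purely the bookkeeping that produces the constant $4$ in Step 1 — which is exactly where six-dimensionality, and nothing else, enters — and the observation that differentiating $\mu^2$ is legitimate because $\mu^2$ is manifestly smooth, whereas $\mu$ is only continuous until the very last line.
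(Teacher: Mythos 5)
Your proposal is correct and follows essentially the same route as the paper: both establish $\Ric-\Ricc=4\mu^2\id$ by summing \eqref{mu_local_to_global} over an adapted frame, then feed this into \eqref{nabla_curvature}, whose right-hand side vanishes by the symmetries of $\nabla\sigma$, giving $Z(\mu^2)=0$ and hence constancy by connectedness. Your explicit remark that $\mu^2$ (unlike $\mu$ itself, a priori only continuous) is smooth because it is a trace of a smooth tensor, so that differentiating it is legitimate, is a welcome clarification of a point the paper leaves implicit.
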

\begin{proof}
We only prove $\mu$ is locally constant, then the claim follows from the connectedness of $M$. Mapping $X$ into $A+B$ in \eqref{mu_local_to_global} one has
\begin{equation*}
g((\nabla_{A+B}J)Y,(\nabla_{A+B}J)Y) = \mu^2\bigl(\lVert A+B\rVert^2\lVert Y\rVert^2-g(A+B,Y)^2-\sigma(A+B,Y)^2\bigr),
\end{equation*}
which can be simplified as
\begin{align*}
g((\nabla_AJ)Y,(\nabla_BJ)Y) & = \mu^2\bigl(g(A,B)\lVert Y\rVert^2 - g(A,Y)g(B,Y)-g(JA,Y)g(JB,Y)\bigr).
\end{align*}
On the other hand, using a local $\mathrm{U}(3)$-adapted frame $\{E_i,JE_i\}, i=1,2,3$, we can write
\begin{align*}
g((\Ric-\Ricc)A,B) & = \sum_{ i=1}^{ 3} g((\nabla_AJ)E_i,(\nabla_BJ)E_i)+g((\nabla_AJ)JE_i,(\nabla_BJ)JE_i) \\
& = \mu^2\bigl(6g(A,B)-g(A,B)-g(JA,JB)\bigr)=4\mu^2g(A,B).
\end{align*}
Thus 
\begin{equation}
\label{diff_ricci}
\Ric-\Ricc = 4\mu^2 \id,
\end{equation}
but now formula \eqref{nabla_curvature} implies
\begin{align*}
2g((\nabla_Z(\Ric-\Ricc))X,Y) & = g((\Ric-\Ricc)JX,(\nabla_ZJ)Y) + g((\Ric-\Ricc)JY,(\nabla_ZJ)X) \\
& = 4\mu^2\bigl(g(JX,(\nabla_ZJ)Y)+g(JY,(\nabla_ZJ)X)\bigr)=0.
\end{align*}
This proves $\nabla_Z (\Ric-\Ricc)= 0 = 4Z(\mu^2)\id$ for every $Z$, hence $\mu$ is locally constant.
\end{proof}
We have thus proved that on connected nearly K\"ahler six-manifolds there exists a constant $\mu$ such that
\begin{equation*}
\lVert (\nabla_X J)Y\rVert^2 = \mu^2\bigl(\lVert X\rVert^2 \lVert Y\rVert^2 - g(X,Y)^2-\sigma(X,Y)^2 \bigr), \quad X,Y \in \mathfrak{X}(M).
\end{equation*}
Observe $\mu$ cannot vanish because of the nearly K\"ahler condition, so we assume it to be positive according to Lemma \ref{definition_mu}. Using the terminology introduced by Gray \cite[Proposition 3.5]{gray} we say that connected nearly K\"ahler six-manifolds have \emph{global constant type}. 

\section{The Einstein condition}
\label{the_einstein_condition}

The aim of this section is to push our calculations further in order to prove that nearly K\"ahler six-manifolds are Einstein. We follow \cite{gray1976} to do this. We first introduce a connection adapted to the $\mathrm{U}(3)$-structure $(g,J)$. A quick computation of the torsion of $J$ will help us go smoothly towards it. We then work out some relevant symmetries satisfied by the curvature tensor of the new connection. We conclude proving that $\Ric_g = 5\mu^2g$, where $\Ric_g$ is the Ricci curvature $(2,0)$-tensor of the Levi-Civita connection and $\mu$ is the function defined in \eqref{mu_local_to_global}. 

Let us now compute the Nijenhuis tensor of $J$, i.e.\ the type $(2,1)$-tensor field $N$ on $M$ defined by
\begin{equation*}
4N(X,Y) \coloneqq  [X,Y]-[JX,JY]+J[JX,Y]+J[X,JY], \quad X,Y \in \mathfrak{X}(M).
\end{equation*}
\begin{prop}
\label{nijenhuis_nk}
If $M$ is nearly K\"ahler then $N(X,Y) = J(\nabla_X J)Y$, where $X,Y \in \mathfrak{X}(M)$.
\end{prop}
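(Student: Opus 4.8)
The plan is to trade the Lie brackets for covariant derivatives using the fact that the Levi-Civita connection is torsion-free, $[U,V] = \nabla_U V - \nabla_V U$, and then expand every term of the form $\nabla_U(JV)$ by the Leibniz rule $\nabla_U(JV) = (\nabla_U J)V + J\nabla_U V$. Carrying this out term by term in
$$4N(X,Y) = [X,Y]-[JX,JY]+J[JX,Y]+J[X,JY],$$
and using $J^2 = -\id$ wherever a $J$ hits a $J$, one finds that all the ``pure'' terms (those involving only $\nabla$ and not $\nabla J$) cancel: $\nabla_X Y$ against $-\nabla_X Y$, $\nabla_Y X$ against $-\nabla_Y X$, $J\nabla_{JX}Y$ against $-J\nabla_{JX}Y$, and $J\nabla_{JY}X$ against $-J\nabla_{JY}X$. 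What survives is a combination of four terms of the form $(\nabla_{JX}J)Y$, $(\nabla_{JY}J)X$, $J(\nabla_X J)Y$, $J(\nabla_Y J)X$.

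Next I would invoke the nearly K\"ahler hypothesis twice. First, identity \eqref{covariant_derivative_j} rewrites $(\nabla_{JX}J)Y = -J(\nabla_X J)Y$ and $(\nabla_{JY}J)X = -J(\nabla_Y J)X$, collapsing the surviving part to a multiple of $J(\nabla_X J)Y - J(\nabla_Y J)X$. Second, since $\nabla\sigma$ is a genuine three-form (Remark \ref{remark_1}), $\nabla\sigma(X,Y,\cdot)$ is skew in its first two slots, i.e.\ $(\nabla_X J)Y = -(\nabla_Y J)X$; this folds the two remaining terms into one and produces exactly $4J(\nabla_X J)Y$ on the right-hand side. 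Dividing by $4$ gives $N(X,Y) = J(\nabla_X J)Y$.

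I do not expect a genuine conceptual obstacle here: the entire argument is an exercise in substitution and sign-bookkeeping. The one place where care is needed is in expanding $J[JX,Y]$ and $J[X,JY]$, where a $J$ distributes over a Leibniz-expanded bracket and then meets another $J$, so one must track the resulting $J^2 = -\id$ and the cross terms accurately; a systematic grouping (pure-$\nabla$ terms first, $\nabla J$ terms afterwards) makes the cancellations transparent. It is also worth noting explicitly that one only uses the nearly K\"ahler condition at the very end, so the purely Leibniz part of the computation is valid for any almost Hermitian manifold and isolates cleanly the point at which Definition \ref{nk_definition} enters.
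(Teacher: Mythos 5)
Your computation is correct and is essentially the same as the paper's: expand the brackets via torsion-freeness and the Leibniz rule, cancel the pure $\nabla$ terms, and reduce the surviving $\nabla J$ terms using $(\nabla_{JX}J)Y=-J(\nabla_XJ)Y$ and the skew-symmetry $(\nabla_YJ)X=-(\nabla_XJ)Y$. The only cosmetic difference is that the paper folds some of these identities into the first displayed line rather than grouping all the pure-$\nabla$ cancellations first.
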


\begin{proof}
The key property we use here is that the Levi-Civita connection $\nabla$ is torsion-free. Expanding the commutators one gets
\begin{align*}
4N(X,Y) & = \nabla_X Y - \nabla_Y X + 2J(\nabla_XJ)Y+J\nabla_X JY - J\nabla_Y JX \\
& = 2J(\nabla_XJ)Y+J(\nabla_X J)Y - J(\nabla_Y J)X\\
& = 4J(\nabla_X J)Y,
\end{align*}
and we are done.
\end{proof}
The difference $\nabla - \tfrac{1}{2}N$ defines a covariant derivative $\widehat{\nabla}$:
\begin{equation*}
\widehat{\nabla}_XY \coloneqq \nabla_XY-\tfrac{1}{2}J(\nabla_XJ)Y,  \quad X,Y \in \mathfrak{X}(M).
\end{equation*}
\begin{prop}
\label{unitary_connection}
$\widehat{\nabla}$ is a $\mathrm{U}(n)$-connection.
\end{prop}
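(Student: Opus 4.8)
The plan is to verify that $\widehat{\nabla}$ is metric and that $\widehat{\nabla}J=0$; together these say the holonomy of $\widehat{\nabla}$ lies in $\mathrm{O}(2n)\cap\mathrm{GL}(n,\mathbb{C})=\mathrm{U}(n)$ (using that $J$ is already $g$-orthogonal, so the complex-linear part of the structure group is automatically unitary). First I would check that $\widehat{\nabla}$ is a genuine connection: the correction term $Y\mapsto -\tfrac12 J(\nabla_XJ)Y$ is tensorial in $X$ and is a derivation in $Y$ in the appropriate sense, i.e. $\widehat{\nabla}_X(fY)=f\widehat{\nabla}_XY+(Xf)Y$, which is immediate since $\nabla$ is a connection and the added term is $C^\infty(M)$-linear in $Y$. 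This is routine and I would dispatch it in a line.

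Next, metricity. I would compute $X\big(g(Y,Z)\big)-g(\widehat{\nabla}_XY,Z)-g(Y,\widehat{\nabla}_XZ)$ and use that $\nabla$ is metric to reduce it to $\tfrac12 g\big(J(\nabla_XJ)Y,Z\big)+\tfrac12 g\big(Y,J(\nabla_XJ)Z\big)$. Since $J$ is orthogonal and $\nabla_XJ$ is skew-symmetric (both recorded in the excerpt, cf.\ the proof of Lemma~\ref{covariant_derivative_sigma_etc}), the operator $J(\nabla_XJ)$ is itself skew-adjoint: $g\big(J(\nabla_XJ)Y,Z\big)=-g\big((\nabla_XJ)Y,JZ\big)=g\big(Y,(\nabla_XJ)JZ\big)=-g\big(Y,J(\nabla_XJ)Z\big)$, where the last step uses \eqref{covariant_derivative_j} in the form $(\nabla_XJ)J=-J(\nabla_XJ)$. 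Hence the two correction terms cancel and $\widehat{\nabla}g=0$.

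Then, $\widehat{\nabla}$-parallelism of $J$. I would write $(\widehat{\nabla}_XJ)Y=\widehat{\nabla}_X(JY)-J\widehat{\nabla}_XY$ and expand both terms with the definition of $\widehat{\nabla}$: the $\nabla$-parts give $(\nabla_XJ)Y$, and the correction parts give $-\tfrac12 J(\nabla_XJ)(JY)+\tfrac12 J\cdot J(\nabla_XJ)Y=-\tfrac12 J(\nabla_XJ)JY-\tfrac12(\nabla_XJ)Y$. Using $(\nabla_XJ)JY=-J(\nabla_XJ)Y$ again, the term $-\tfrac12 J(\nabla_XJ)JY$ becomes $+\tfrac12 J\cdot J(\nabla_XJ)Y=-\tfrac12(\nabla_XJ)Y$, so the correction contributes $-(\nabla_XJ)Y$, exactly cancelling the $\nabla$-part. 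Thus $\widehat{\nabla}J=0$.

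Finally I would assemble: $\widehat{\nabla}g=0$ and $\widehat{\nabla}J=0$ force the $\widehat{\nabla}$-holonomy into the stabiliser of $(g,J)$, which is $\mathrm{U}(n)$, so $\widehat{\nabla}$ is a $\mathrm{U}(n)$-connection. I do not anticipate a serious obstacle here; the only thing to be careful about is the consistent use of the two sign identities $g(J\cdot,\cdot)=-g(\cdot,J\cdot)$ and $(\nabla_XJ)J=-J(\nabla_XJ)$, and keeping track of whether $J$ acts before or after $\nabla_XJ$ in each term. One might alternatively phrase the conclusion as: $\widehat{\nabla}=\nabla-\tfrac12 N$ differs from the Levi-Civita connection by a tensor valued in $\mathfrak{u}(n)^\perp$-type corrections yet still preserves both tensors, so it is the canonical Hermitian (Gray) connection; but the direct two-line computations above are cleanest.
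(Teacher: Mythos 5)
Your proposal is correct and follows essentially the same route as the paper: verify $\widehat{\nabla}g=0$ via the skew-adjointness of $J(\nabla_XJ)$ (which you derive from the skew-adjointness of $\nabla_XJ$ and the anticommutation $(\nabla_XJ)J=-J(\nabla_XJ)$, while the paper routes the same fact through $\nabla\sigma$ being a three-form), and then verify $\widehat{\nabla}J=0$ by direct expansion. The sign bookkeeping in both computations checks out, so nothing further is needed.
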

\begin{remark}
In Proposition \ref{su3_connection} below we prove that on nearly K\"ahler six-manifolds $\widehat{\nabla}$ is actually an $\mathrm{SU}(3)$-connection, first exhibiting a complex volume form $\psi_{\mathbb{C}}$ on $M$ and then proving it is $\widehat{\nabla}$-parallel.
\end{remark}
\begin{proof}
It is enough to show $\widehat{\nabla} g = 0$ and $\widehat{\nabla} J = 0$. Notice that $J(\nabla_XJ)$ is skew-adjoint: recall $\nabla\sigma$ is a three-form by Proposition \ref{first_equivalent_characterisations}, and that $J$ and $\nabla J$ anti-commute, so
\begin{align*}
g(J(\nabla_XJ)Y,Z) & = -g((\nabla_XJ)Y,JZ) = -\nabla\sigma(X,Y,JZ) \\
& = \nabla\sigma(X,JZ,Y) = g((\nabla_XJ)JZ,Y) = -g(Y,J(\nabla_XJ)Z).
\end{align*}
Since $\nabla g = 0$, the above identity implies
\begin{align*}
\widehat{\nabla}g(X,Y,Z) & = X(g(Y,Z))-g(\nabla_XY,Z)-g(Y,\nabla_XZ) \\
& \qquad + \tfrac{1}{2}\bigl(g(J(\nabla_XJ)Y,Z)+g(Y,J(\nabla_XJ)Z)\bigr) = 0.
\end{align*}
The second claim follows easily by expanding $(\widehat{\nabla}_XJ)Y = \widehat{\nabla}_X JY - J\widehat{\nabla}_XY$ and simplifying.
\end{proof}

Let us call $\widehat{R}$ the curvature tensor of $\widehat{\nabla}$: $\widehat{R}(W,X)Y \coloneqq \widehat{\nabla}_W\widehat{\nabla}_XY-\widehat{\nabla}_X\widehat{\nabla}_WY - \widehat{\nabla}_{[W,X]}Y$. Standard computations give
\begin{align*}
\widehat{R}(W,X)Y & = R(W,X)Y + \tfrac{1}{4}\bigl((\nabla_XJ)(\nabla_WJ)Y-(\nabla_WJ)(\nabla_XJ)Y\bigr) \\
& \qquad -\tfrac{1}{2}J(R(W,X)JY-JR(W,X)Y).
\end{align*}
A contraction with the metric and identity \eqref{inner_products} applied to the last term yield a type $(4,0)$-tensor field, which we still denote by $\widehat{R}$. Its expression is
\begin{align}
\label{characteristic_curvature}
\widehat{R}(W,X,Y,Z) & = R(W,X,Y,Z)+\tfrac{1}{2}g((\nabla_WJ)X,(\nabla_YJ)Z) \nonumber \\
& \qquad +\tfrac{1}{4}\bigl(g((\nabla_XJ)Y,(\nabla_WJ)Z)-g((\nabla_WJ)Y,(\nabla_XJ)Z) \bigr).
\end{align}
We can go a bit further rewriting every summand in terms of the curvature tensor $R$: by formula \eqref{inner_products} and the first Bianchi identity, \eqref{characteristic_curvature} becomes
\begin{align*}
\widehat{R}(W,X,Y,Z) & = R(W,X,Y,Z)+\tfrac{1}{2}\bigl(R(W,X,JY,JZ)-R(W,X,Y,Z)\bigr) \nonumber \\
& \qquad +\tfrac{1}{4}\bigl(R(X,Y,JW,JZ)-R(X,Y,W,Z)\bigr) \\
& \qquad \qquad -R(W,Y,JX,JZ)+R(W,Y,X,Z)\bigr) \\
& = \tfrac{1}{4}\bigl(3R(W,X,Y,Z)+2R(W,X,JY,JZ) \\
& \qquad +R(X,Y,JW,JZ)-R(W,Y,JX,JZ)\bigr).
\end{align*}
Recalling that $R$ is $J$-invariant and lies in $\mathrm{Sym}^2(\Lambda^2)$ we obtain the final expression
\begin{align}
\label{characteristic_tensor_with_curvature}
\widehat{R}(W,X,Y,Z) & = \tfrac{1}{4}\bigl(3R(W,X,Y,Z)+2R(W,X,JY,JZ) \nonumber \\
& \qquad +R(W,Z,JX,JY)+R(W,Y,JZ,JX) \bigr).
\end{align}
\begin{lemma}
\label{symmetry_of_s_with_j}
The tensor $\widehat{R}$ lies in $\Lambda^2 \otimes [\Lambda^{1,1}]$.
\end{lemma}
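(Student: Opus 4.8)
The claim decomposes into two parts: (i) $\widehat R$ is $J$-invariant in all four arguments, i.e.\ $\widehat R(JW,JX,JY,JZ) = \widehat R(W,X,Y,Z)$, and in fact satisfies $\widehat R(W,X,JY,JZ) = \widehat R(W,X,Y,Z)$; and (ii) $\widehat R$ enjoys the usual algebraic curvature symmetries — skew in the first pair, skew in the second pair, and pair-symmetric — so that it lies in $\mathrm{Sym}^2(\Lambda^2)$. Together these say each slot-pair of $\widehat R$ takes values in the $+1$-eigenspace of $J$ on $\Lambda^2$, which was identified in Section \ref{symmetries} as $[\Lambda^{1,1}]$, whence $\widehat R \in \mathrm{Sym}^2([\Lambda^{1,1}]) \subset [\Lambda^{1,1}] \otimes [\Lambda^{1,1}]$; restricting to $\Lambda^2 \otimes [\Lambda^{1,1}]$ as stated is then immediate.

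The plan is to work entirely from the closed formula \eqref{characteristic_tensor_with_curvature}, using only that $R \in \mathrm{Sym}^2(\Lambda^2)$ satisfies the first Bianchi identity and is $J$-invariant (established in the Remark after Lemma \ref{important_corollary}). First I would verify skew-symmetry of $\widehat R$ in its last two arguments $Y,Z$: swapping $Y \leftrightarrow Z$ in \eqref{characteristic_tensor_with_curvature} and comparing, the term $3R(W,X,Y,Z)$ and $2R(W,X,JY,JZ)$ are manifestly skew (using $R(W,X,JZ,JY) = R(W,X,ZY)$-type $J$-invariance via pair symmetry), while the remaining pair $R(W,Z,JX,JY) + R(W,Y,JZ,JX)$ transforms into $R(W,Y,JX,JZ) + R(W,Z,JY,JX)$, which is the negative of the original — here the first Bianchi identity applied to the triple $(W,Y,Z)$ in the appropriate slots is what forces the cancellation. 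Next I would check the $J$-invariance: replace $(W,X,Y,Z)$ by $(JW,JX,JY,JZ)$ in \eqref{characteristic_tensor_with_curvature}; every term becomes $R$ evaluated on four $J$-images, which by $J$-invariance of $R$ collapses back to the original four terms, giving $\widehat R(JW,JX,JY,JZ) = \widehat R(W,X,Y,Z)$ at once. To upgrade this to the two-sided statement $\widehat R(W,X,JY,JZ) = \widehat R(W,X,Y,Z)$ I would substitute $Y \mapsto JY$, $Z \mapsto JZ$ directly into \eqref{characteristic_tensor_with_curvature} and simplify using $J$-invariance of $R$ together with the first Bianchi identity on the last three slots; this is the computational heart of the argument. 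Skew-symmetry in the first pair $W,X$ and the pair-exchange symmetry $\widehat R(W,X,Y,Z) = \widehat R(Y,Z,W,X)$ then follow either by the same bookkeeping or, more cheaply, by observing that \eqref{characteristic_curvature} already exhibits $\widehat R$ as $R$ plus the manifestly-symmetric-in-$\{(W,X),(Y,Z)\}$ term $\tfrac12 g((\nabla_W J)X,(\nabla_Y J)Z)$ plus a term antisymmetric under $W \leftrightarrow X$, so that the symmetries of $\widehat R$ reduce to those of $R$ plus a controlled correction.

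The main obstacle is the $J$-invariance-in-two-slots computation: the four terms of \eqref{characteristic_tensor_with_curvature} after the substitution $Y \mapsto JY, Z \mapsto JZ$ do not individually match the original terms, and one must route through both the first Bianchi identity and the pair symmetry of $R$ to see the sum is unchanged — this is exactly the step where, as noted in the introduction, one needs $\widehat R$ (equivalently the relevant tensor) to sit inside $\mathrm{Sym}^2(\Lambda^2)$, which is why establishing the algebraic symmetries \emph{first} is the right order. Once that identity is in hand, membership in $\Lambda^2 \otimes [\Lambda^{1,1}]$ is a formality given the eigenspace description of $[\Lambda^{1,1}]$ from Section \ref{symmetries}.
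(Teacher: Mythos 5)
Your plan is essentially correct, but it takes a genuinely different and much heavier route than the paper. The paper's proof is two lines: skew-symmetry in $(W,X)$ holds by definition of curvature, and $\widehat{R}(W,X)\in[\Lambda^{1,1}]$ because $\widehat{R}$ is the curvature of $\widehat{\nabla}$, which by Proposition \ref{unitary_connection} preserves $g$ and $J$; hence $\widehat{R}(W,X)$ lies in $\mathfrak{u}(3)$, which Section \ref{symmetries} identifies with $[\Lambda^{1,1}]$ under the metric. This conceptual argument costs nothing once Proposition \ref{unitary_connection} is in place, whereas your direct verification from the closed formula requires real computation. Your computation does go through, but two remarks: first, the $Y\leftrightarrow Z$ skew-symmetry of \eqref{characteristic_tensor_with_curvature} is forced purely by skew-symmetry of $R$ in its last two slots (the terms $R(W,Z,JX,JY)+R(W,Y,JZ,JX)$ visibly swap into each other's negatives); no Bianchi identity is needed there, contrary to what you assert. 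Second, the step you flag as "the computational heart" --- $\widehat{R}(W,X,JY,JZ)=\widehat{R}(W,X,Y,Z)$ --- is left unexecuted; it is cleanest not from \eqref{characteristic_tensor_with_curvature} but from \eqref{characteristic_curvature}: substituting $Y\mapsto JY$, $Z\mapsto JZ$ there, identity \eqref{inner_products} gives $R(W,X,JY,JZ)=R(W,X,Y,Z)+g((\nabla_WJ)X,(\nabla_YJ)Z)$, while \eqref{covariant_derivative_j} and the anticommutation of $J$ with $\nabla J$ give $(\nabla_{JY}J)JZ=-(\nabla_YJ)Z$ and leave the two $\tfrac14$-terms unchanged, so the extra $g((\nabla_WJ)X,(\nabla_YJ)Z)$ is exactly cancelled by the sign flip in the $\tfrac12$-term. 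Finally, note that the pair-exchange symmetry you also propose to establish is not needed for this lemma; it is the content of the subsequent Lemma \ref{symmetries_of_s}.
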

\begin{proof}
Skew-symmetry in the first two arguments is straightforward by definition of $\widehat{R}$. That $\widehat{R}(W,X)$ sits in $[\Lambda^{1,1}]$ is a simple consequence of Proposition \ref{unitary_connection}.
\end{proof}
\begin{lemma}
\label{symmetries_of_s}
The tensor $\widehat{R}$ sits inside $\mathrm{Sym}^2([\Lambda^{1,1}])$.
\end{lemma}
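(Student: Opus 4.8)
The plan is to show that $\widehat{R}$, already known to sit in $\Lambda^2 \otimes [\Lambda^{1,1}]$ by Lemma \ref{symmetry_of_s_with_j}, is in addition symmetric under exchange of the two pairs of arguments, i.e.\ $\widehat{R}(W,X,Y,Z) = \widehat{R}(Y,Z,W,X)$; combined with the $[\Lambda^{1,1}]$-symmetry in each pair this places $\widehat{R}$ inside $\mathrm{Sym}^2([\Lambda^{1,1}])$. The natural tool is the explicit formula \eqref{characteristic_tensor_with_curvature} expressing $\widehat{R}$ purely in terms of the Levi-Civita curvature $R$, together with the facts already established for $R$: it lies in $\mathrm{Sym}^2(\Lambda^2)$, satisfies the first Bianchi identity, and is $J$-invariant in the sense $R(JW,JX,JY,JZ) = R(W,X,Y,Z)$.

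First I would write out $\widehat{R}(Y,Z,W,X)$ using \eqref{characteristic_tensor_with_curvature}, obtaining
\[
\widehat{R}(Y,Z,W,X) = \tfrac14\bigl(3R(Y,Z,W,X) + 2R(Y,Z,JW,JX) + R(Y,X,JZ,JW) + R(Y,W,JX,JZ)\bigr).
\]
Using pair symmetry of $R$, the first term equals $3R(W,X,Y,Z)$ and the second equals $2R(JW,JX,Y,Z) = 2R(W,X,JY,JZ)$ after applying $J$-invariance and pair symmetry; so those two terms already match the corresponding terms of $\widehat{R}(W,X,Y,Z)$. It then remains to prove the identity
\[
R(W,Z,JX,JY) + R(W,Y,JZ,JX) = R(Y,X,JZ,JW) + R(Y,W,JX,JZ),
\]
i.e.\ that the ``extra'' piece $T(W,X,Y,Z) \coloneqq R(W,Z,JX,JY) + R(W,Y,JZ,JX)$ satisfies $T(W,X,Y,Z) = T(Y,Z,W,X)$. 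I would attack this by rewriting each term via pair symmetry and $J$-invariance to move all $J$'s onto a single slot pattern, then invoke the first Bianchi identity for $R$ (applied to suitable triples, as was done repeatedly in the proof of \eqref{inner_products}) to reduce the difference $T(W,X,Y,Z) - T(Y,Z,W,X)$ to a combination that collapses to zero. An alternative, possibly cleaner route: observe that $\widehat{R} - R$ is, by \eqref{characteristic_curvature}, built from the tensor $g((\nabla_\bullet J)\bullet,(\nabla_\bullet J)\bullet)$, and one can check directly from the symmetries of $\nabla J$ (skew-symmetry, equation \eqref{moving_j_around_eq}, and \eqref{inner_products}) that this correction tensor is itself pair-symmetric, so that $\widehat{R}$ inherits pair symmetry from $R$.

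The main obstacle I expect is precisely the bookkeeping in that last identity: all terms of $T$ carry two $J$'s on different, ``crossed'' slots, so neither pair symmetry nor $J$-invariance alone matches them up, and one genuinely needs a Bianchi-identity manipulation to close the gap. The risk is sign errors and mismatched argument orders; I would guard against this by systematically normalising every term to the form $R(\,\cdot\,,\,\cdot\,,J\,\cdot\,,J\,\cdot\,)$ with a fixed ordering convention before comparing, exactly in the spirit of the polarisation computations earlier in Section \ref{curvature}. Once pair symmetry of $\widehat{R}$ is in hand, the conclusion $\widehat{R} \in \mathrm{Sym}^2([\Lambda^{1,1}])$ is immediate from Lemma \ref{symmetry_of_s_with_j}.
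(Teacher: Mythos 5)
Your plan is exactly the paper's: use \eqref{characteristic_tensor_with_curvature} together with the pair symmetry and $J$-invariance of $R$ to check $\widehat{R}(W,X,Y,Z)=\widehat{R}(Y,Z,W,X)$, and then conclude via Lemma \ref{symmetry_of_s_with_j}. The one thing to correct is your assessment of the ``main obstacle'': the residual identity you isolate does \emph{not} require the first Bianchi identity, nor any cross-term cancellation. The third and fourth summands match up individually under pair symmetry and $J$-invariance alone. Indeed,
\begin{align*}
R(Y,X,JZ,JW) &= R(JZ,JW,Y,X) = R(-Z,-W,JY,JX) = R(W,Z,JX,JY),\\
R(Y,W,JX,JZ) &= R(JX,JZ,Y,W) = R(-X,-Z,JY,JW) = R(Z,X,JW,JY) = R(W,Y,JZ,JX),
\end{align*}
where each chain uses $R\in\mathrm{Sym}^2(\Lambda^2)$, then $J$-invariance, then skew-symmetry in both pairs. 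So once you have matched the first two terms (as you correctly did), the remaining two terms of $\widehat{R}(Y,Z,W,X)$ are literally the remaining two terms of $\widehat{R}(W,X,Y,Z)$, and the proof closes with no further work. Your alternative route via the pair symmetry of the correction tensor in \eqref{characteristic_curvature} would also work, but it is unnecessary here.
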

\begin{proof}
Lemma \ref{symmetry_of_s_with_j} implies that we only need to check $\widehat{R}(W,X,Y,Z) = \widehat{R}(Y,Z,W,X)$. This can be done using \eqref{characteristic_tensor_with_curvature} and applying $J$-invariance of $R$.
\end{proof}
We now want more information about the exact expression of $\nabla \widehat{R}$. We keep working on a nearly K\"ahler manifold of generic dimension $2n$, focussing on the six-dimensional case only after Proposition~\ref{main_result_for_einstein}. Incidentally, in the course of the proof of that result we will need an explicit formula for the cyclic sum $\nabla_V\widehat{R}(W,X,Y,Z)+\nabla_W\widehat{R}(X,V,Y,Z)+\nabla_X\widehat{R}(V,W,Y,Z)$, specifically the case where $V,W,X$ are elements of a local unitary frame. The goal now is to work out this expression.

Let us start computing $\nabla_V\widehat{R}(W,X,Y,Z)$. Differentiating \eqref{characteristic_curvature} one gets
\begin{align*}
V(\widehat{R}(W,X,Y,Z)) & = V(R(W,X,Y,Z))+\tfrac{1}{4}g(\nabla_V((\nabla_XJ)Y),(\nabla_WJ)Z) \\
& \qquad +\tfrac{1}{4}g((\nabla_XJ)Y,\nabla_V((\nabla_WJ)Z)) -\tfrac{1}{4}g(\nabla_V((\nabla_WJ)Y),(\nabla_XJ)Z)\\
& \qquad -\tfrac{1}{4}g((\nabla_WJ)Y,\nabla_V((\nabla_XJ)Z))+\tfrac{1}{2}g(\nabla_V((\nabla_WJ)X),(\nabla_YJ)Z) \\
& \qquad +\tfrac{1}{2}g((\nabla_WJ)X,\nabla_V((\nabla_YJ)Z)).
\end{align*}
Expanding both sides and isolating $\nabla_V\widehat{R}(W,X,Y,Z)$ on the left we have
\begin{align*}
\MoveEqLeft
\nabla_V\widehat{R}(W,X,Y,Z) \\
& = -\widehat{R}(\nabla_VW,X,Y,Z)\negthinspace-\negthinspace\widehat{R}(W,\nabla_VX,Y,Z)\negthinspace-\negthinspace\widehat{R}(W,X,\nabla_VY,Z) \negthinspace-\negthinspace\widehat{R}(W,X,Y,\nabla_VZ)\\
& \qquad +R(\nabla_VW,X,Y,Z)\negthinspace+\negthinspace R(W,\nabla_VX,Y,Z) \negthinspace+\negthinspace R(W,X,\nabla_VY,Z)\negthinspace+\negthinspace R(W,X,Y,\nabla_VZ) \\[4pt]
& \qquad +\tfrac{1}{4}\bigl(g((\nabla_V(\nabla_XJ))Y+(\nabla_XJ)\nabla_VY,(\nabla_WJ)Z)\\
& \qquad \qquad +g((\nabla_XJ)Y,(\nabla_V(\nabla_WJ))Z+(\nabla_WJ)\nabla_VZ)\bigr) \\[4pt]
& \qquad -\tfrac{1}{4}\bigl(g((\nabla_V(\nabla_WJ))Y+(\nabla_WJ)\nabla_VY,(\nabla_XJ)Z) \\
& \qquad \qquad +g((\nabla_WJ)Y,(\nabla_V(\nabla_XJ))Z+(\nabla_XJ)\nabla_VZ)\bigr) \\[4pt]
& \qquad +\tfrac{1}{2}\bigl(g((\nabla_V(\nabla_WJ))X+(\nabla_WJ)\nabla_VX,(\nabla_YJ)Z) \\
& \qquad \qquad +g((\nabla_WJ)X,(\nabla_V(\nabla_YJ))Z+(\nabla_YJ)\nabla_VZ)\bigr) +\nabla_VR(W,X,Y,Z).
\end{align*}
One can expand the first four summands on the right hand side making use of \eqref{characteristic_curvature}. Recall that $(\nabla_{A,B}^2J)C = (\nabla_A(\nabla_BJ))C-(\nabla_{\nabla_AB}J)C$, then simplifying we are left with
\begin{align*}
\MoveEqLeft
\nabla_V\widehat{R}(W,X,Y,Z) \\
& = \nabla_VR(W,X,Y,Z) \\
& \qquad +\tfrac{1}{2}\bigl(g((\nabla_{V,W}^2J)X,(\nabla_YJ)Z)+g((\nabla_{V,Y}^2J)Z,(\nabla_WJ)X)\bigr) \\
& \qquad +\tfrac{1}{4}\bigl(g((\nabla_{V,W}^2J)Z,(\nabla_XJ)Y)+g((\nabla_{V,X}^2J)Y,(\nabla_WJ)Z)\bigr) \\
& \qquad -\tfrac{1}{4}\bigl(g((\nabla_{V,W}^2J)Y,(\nabla_XJ)Z)+g((\nabla_{V,X}^2J)Z,(\nabla_WJ)Y)\bigr).
\end{align*}
Therefore, the second Bianchi identity implies
\begin{align}
\label{cyclic_sum_last_result}
\MoveEqLeft
\nabla_V\widehat{R}(W,X,Y,Z)+\nabla_W\widehat{R}(X,V,Y,Z)+\nabla_X\widehat{R}(V,W,Y,Z) \nonumber \\
& = \cyclicsum_{V,W,X} \Bigl(\tfrac{1}{2}g((\nabla_{V,Y}^2J)Z,(\nabla_WJ)X)+\tfrac{1}{2}g((\nabla_{V,W}^2J)X,(\nabla_YJ)Z)\nonumber \\[-2mm]
& \qquad \qquad +\tfrac{1}{4}g((\nabla_{V,W}^2J)Z-(\nabla_{W,V}^2J)Z,(\nabla_XJ)Y) \nonumber \\
& \qquad \qquad +\tfrac{1}{4}g((\nabla_{V,X}^2J)Y-(\nabla_{X,V}^2J)Y,(\nabla_WJ)Z)\Bigr).
\end{align}
Besides formula \eqref{cyclic_sum_last_result}, in the proof of Proposition \ref{main_result_for_einstein} we will need a last technical result. 
\begin{lemma}
Let $Y$ be a vector field on $M$ and $\{E_i,JE_i\}_{i=1,\dots,n}$, with $JE_i = E_{n+i}$, be a local orthonormal frame. Then the following formula holds:
\begin{equation}
\label{another_formula_ricci}
\sum_{ j=1}^{ 2n}(\nabla_{E_j,E_j}^2 J)Y = -(\Ric-\Ricc)JY.
\end{equation}
\end{lemma}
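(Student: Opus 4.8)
The plan is to compute the trace $\sum_j (\nabla^2_{E_j,E_j}J)Y$ directly by relating the second covariant derivative of $J$ to the curvature and then contracting. First I would record the basic commutation identity for $\nabla^2 J$: since $J$ is a $\nabla$-parallel-free endomorphism field, one has, for any vector fields $U,V$,
\begin{equation*}
(\nabla^2_{U,V}J)W-(\nabla^2_{V,U}J)W = (R(U,V)J)W = R(U,V)JW-JR(U,V)W,
\end{equation*}
which is exactly the tensorial content already used in the proof of Lemma \ref{second_order_derivatives_sigma} (the identity $\nabla^2\sigma(W,X,Y,Z)-\nabla^2\sigma(X,W,Y,Z)=g((R(W,X)J)Y,Z)$, rewritten with $\sigma=g(J\cdot,\cdot)$). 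I would then pair this with a vector field $Z$ and take $U=V=E_j$, summing over the frame; the left-hand side collapses so I actually need a route that does not kill the term I want. The cleaner route: contract the \emph{first Bianchi-type} symmetry for $\nabla^2 J$ coming from $\nabla\sigma\in[\![\Lambda^{3,0}]\!]$ being totally skew, i.e. $\nabla^2\sigma(E_j,E_j,Y,Z)$ relations, against the observation that $\nabla\sigma$ is a three-form so $\nabla^2\sigma(A,B,B,C)=0$.

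Concretely, I would start from the definition $g((\Ric-\Ricc)X,Y)=\sum_i g((\nabla_XJ)E_i,(\nabla_YJ)E_i)$ in \eqref{difference_ricci} and differentiate, but the more economical path is to use the already-established formula \eqref{cyclic_sum_sigma}, namely $2\nabla^2\sigma(W,X,Y,Z)=-\cyclicsum_{X,Y,Z}g((\nabla_WJ)X,(\nabla_YJ)JZ)$. Setting $W=X=E_j$ and summing over $j$ makes the left side vanish (three-form), giving a relation among the three cyclic terms on the right; two of those, after using $\sum_j(\nabla_{E_j}J)E_j=0$ (which follows from skew-symmetry of $\nabla\sigma$ exactly as in Lemma \ref{definition_mu}'s frame computation and the argument that $\sum_j B^j_j=0$ used in the proof of \eqref{nabla_curvature}), should reduce to traces that reassemble into $\Ric-\Ricc$ applied to $JY$. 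The key algebraic manipulations are: expand $(\nabla^2_{E_j,E_j}J)Y=\nabla_{E_j}((\nabla_{E_j}J)Y)-(\nabla_{\nabla_{E_j}E_j}J)Y$, pair with arbitrary $Z$, use $g((\nabla_{E_j}J)Y,Z)=\nabla\sigma(E_j,Y,Z)$ and Lemma \ref{covariant_derivative_sigma_etc}(2) to move $J$'s around, and finally recognize $\sum_j R(E_j,\cdot,E_j,\cdot)=\Ric$ and $\sum_j R(E_j,\cdot,JE_j,J\cdot)=\Ricc$ from Definition \ref{ricci_and_ricci_star}.

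The main obstacle I expect is bookkeeping: correctly handling the connection term $(\nabla_{\nabla_{E_j}E_j}J)Y$ and the cross terms $(\nabla_{E_j}J)\nabla_{E_j}Y$ that appear when one naively expands $\nabla_{E_j}((\nabla_{E_j}J)Y)$, and making sure the $J$-placements match the definitions of $\Ric$ versus $\Ricc$ so that the combination comes out as $-(\Ric-\Ricc)JY$ rather than $\pm(\Ric-\Ricc)Y$ or with the wrong sign. The remark following \eqref{norm_nabla_j} (that $R$ is $J$-invariant) and the fact that $\Ric-\Ricc$ commutes with $J$ (proved just before \eqref{nabla_curvature}) will both be needed to rewrite $(\Ric-\Ricc)Y$-type outputs as $(\Ric-\Ricc)JY$; I would use the $J$-invariance of $R$ to convert any $\Ricc$-contraction with mismatched $J$'s into the standard one. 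Once the identity is assembled for arbitrary $Z$, non-degeneracy of $g$ yields the stated formula. I would double-check signs by testing on the homogeneous round $S^6$, where $\Ric-\Ricc=4\mu^2\id$ by \eqref{diff_ricci} and the left side is computable from $(\nabla^2_{X,X}J)Y$ in terms of $\mu^2$.
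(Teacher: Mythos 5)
There is a genuine error at the pivotal step of your ``cleaner route''. You substitute $W=X=E_j$ into \eqref{cyclic_sum_sigma} and assert that the left-hand side vanishes ``(three-form)''. It does not: $\nabla\sigma$ is a three-form in its three \emph{form} arguments, so $\nabla^2\sigma(W,X,Y,Z)$ is skew only in $(X,Y,Z)$ --- this is the identity $\nabla^2\sigma(A,B,B,C)=0$ used in Lemma \ref{important_corollary}. The first slot is the outer differentiation direction, and skew-symmetry in the pair $(W,X)$ fails precisely by a curvature term (Lemma \ref{second_order_derivatives_sigma}, first identity). In fact $2\nabla^2\sigma(E_j,E_j,Y,Z)=2g((\nabla^2_{E_j,E_j}J)Y,Z)$ is exactly the quantity the lemma asks you to compute; if it were zero, the lemma would read $(\Ric-\Ricc)JY=0$, contradicting \eqref{diff_ricci} since $\mu>0$. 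Your sketch is therefore internally inconsistent: it claims the left side is zero while also expecting the surviving right-hand terms to reassemble into the nonzero quantity $-(\Ric-\Ricc)JY$. Carried out literally, the plan would only produce the false conclusion $(\Ric-\Ricc)JY=0$.

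The repair is small and turns your argument into the paper's proof: keep the left-hand side as $2g((\nabla^2_{E_j,E_j}J)Y,Z)$; in the cyclic sum on the right, the term containing $(\nabla_{E_j}J)E_j$ vanishes pointwise by the nearly K\"ahler condition (no summation over $j$ is needed for this); the two remaining terms, after using $(\nabla_AJ)B=-(\nabla_BJ)A$ and $(\nabla_AJ)J=-J(\nabla_AJ)$, are each equal to $-g((\nabla_{E_j}J)Y,(\nabla_{E_j}J)JZ)$, giving $g((\nabla^2_{E_j,E_j}J)Y,Z)=g((\nabla_{E_j}J)Y,(\nabla_{E_j}J)JZ)$. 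Summing over $j$ and invoking \eqref{difference_ricci} yields $g((\Ric-\Ricc)Y,JZ)$, and self-adjointness of $\Ric-\Ricc$ together with its commutation with $J$ gives $-g((\Ric-\Ricc)JY,Z)$. Neither the Ricci-type commutation identity for $\nabla^2J$ from your first paragraph nor the $J$-invariance of $R$ is needed.
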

\begin{proof}
This is a consequence of formula \eqref{cyclic_sum_sigma}:
\begin{align*}
g((\nabla_{E_j,E_j}^2J)Y,X) & = \nabla^2\sigma(E_j,E_j,Y,X) \\
& = -\tfrac{1}{2}\bigl(g((\nabla_{E_j}J)Y,(\nabla_XJ)JE_j)+g((\nabla_{E_j}J)X,(\nabla_{E_j}J)JY)\bigr) \\
& = \tfrac{1}{2}\bigl(g((\nabla_{E_j}J)Y,J(\nabla_XJ)E_j)-g(J(\nabla_{E_j}J)X,(\nabla_{E_j}J)Y)\bigr) \\ 
& = \tfrac{1}{2}\bigl(g((\nabla_{E_j}J)Y,J(\nabla_XJ)E_j)+g(J(\nabla_XJ)E_j,(\nabla_{E_j}J)Y)\bigr) \\
& = g((\nabla_{E_j}J)Y,(\nabla_{E_j}J)JX).
\end{align*}
Then summing over $j$ and identity \eqref{difference_ricci} give
\begin{align*}
\sum_{ j=1}^{2n} g\bigl((\nabla_{E_j,E_j}^2J)Y,X\bigr) & = \sum_{ j=1}^{ 2n} g((\nabla_{E_j}J)Y,(\nabla_{E_j}J)JX) \\
& = g((\Ric-\Ricc)Y,JX) = -g((\Ric-\Ricc)JY,X),
\end{align*}
because $\Ric-\Ricc$ commutes with $J$.
\end{proof}

\begin{prop}
\label{main_result_for_einstein}
Let $W,X$ be two vector fields on $M$ and $\{E_i,JE_i\}_{i=1,\dots,n}$, be a local orthonormal frame as above. Then
\begin{equation}
\label{main_formula_for_einstein}
\sum_{ i,j=1}^{2n} g((\Ric-\Ricc)E_i,E_j)\bigl(R(W,E_i,E_j,X)-5R(W,E_i,JE_j,JX)\bigr)=0.
\end{equation}
\end{prop}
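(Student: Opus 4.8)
The strategy is to contract the cyclic Bianchi-type identity \eqref{cyclic_sum_last_result} over a local unitary frame $\{E_i, JE_i\}$, choosing the free indices in $V, W, X$ to run over the frame and summing, so that the left-hand side collapses via a Ricci-type trace while the right-hand side reorganises into the bilinear form of $\Ric - \Ricc$ against the curvature. First I would set $V = E_i$, $W = E_j$ (and keep $Y, Z$ generic, to be specialised later to $W, X$ in the statement), and sum the third entry $X$ against the same frame. On the left, the terms $\nabla_{E_i}\widehat{R}(E_j, E_i, Y, Z)$ and $\nabla_{E_j}\widehat{R}(E_i, E_i, Y, Z)$ produce contractions of $\widehat R$; since $\widehat R \in \mathrm{Sym}^2([\Lambda^{1,1}])$ by Lemma \ref{symmetries_of_s}, these traces are controlled and, crucially, differ from the corresponding traces of $R$ only by terms built from $\Ric - \Ricc$ via \eqref{characteristic_tensor_with_curvature} and \eqref{difference_ricci}.

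The heart of the computation is the right-hand side of \eqref{cyclic_sum_last_result}. There the second covariant derivatives $\nabla^2_{E_i, E_j} J$ appear paired against $(\nabla_{E_j} J)E_i$-type terms. The key identities to deploy are \eqref{another_formula_ricci}, which turns the diagonal sum $\sum_j (\nabla^2_{E_j, E_j} J)Y$ into $-(\Ric - \Ricc)JY$, together with \eqref{cyclic_sum_sigma} (equivalently the relation $\nabla^2 \sigma(W,X,Y,Z) = -\tfrac12 \cyclicsum g((\nabla_W J)X, (\nabla_Y J)JZ)$) to rewrite the mixed second derivatives, and \eqref{inner_products} to convert every surviving $g((\nabla_\bullet J)\bullet, (\nabla_\bullet J)\bullet)$ back into curvature. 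Iterating these substitutions, every term on the right becomes either a curvature term $R(W, E_i, E_j, X)$ or $R(W, E_i, JE_j, JX)$ weighted by $g((\Ric - \Ricc)E_i, E_j)$, or a term that cancels by the symmetries of $R$ ($J$-invariance, the first Bianchi identity, membership in $\mathrm{Sym}^2(\Lambda^2)$). Comparing the coefficient structure — the left side contributing essentially the $R(W, E_i, E_j, X)$-type trace and the right side producing the asymmetric combination — forces the precise ratio $1 : (-5)$, yielding \eqref{main_formula_for_einstein}.

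The main obstacle I anticipate is bookkeeping: \eqref{cyclic_sum_last_result} has a cyclic sum over three slots, each term carries a $\nabla^2 J$ factor, and after contracting over a frame one is faced with a large number of summands that must be matched in pairs and simplified using the antisymmetrisation $(\nabla^2_{V,W} J) - (\nabla^2_{W,V} J) = (R(V,W)J)$ (from Lemma \ref{second_order_derivatives_sigma}(1), read backwards). The delicate point Morris is accused of glossing over — that the relevant tensor lands in $\mathrm{Sym}^2(\Lambda^2)$ — is exactly what lets us collapse the left-hand contractions and discard the symmetric curvature remainders; so I would be careful to justify, using Lemma \ref{symmetries_of_s}, why the $\widehat R$-traces on the left contribute no term of the form $R(W, E_i, JE_j, JX)$ and only feed into the $\Ric - \Ricc$ coefficient. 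Once the two sides are expressed purely in terms of $\sum_{i,j} g((\Ric - \Ricc)E_i, E_j)$ times curvature, the identity \eqref{main_formula_for_einstein} follows by collecting coefficients; specialising $Y \mapsto W$, $Z \mapsto X$ gives the stated form.
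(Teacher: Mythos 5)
There is a genuine gap. Your plan makes \eqref{cyclic_sum_last_result} the engine of the proof, contracted over a unitary frame; but in the paper that identity plays only an auxiliary role, and the contraction you describe does not produce \eqref{main_formula_for_einstein}. The actual starting point is the purely algebraic observation that, because $\widehat{R}\in\Lambda^2\otimes[\Lambda^{1,1}]$ (Lemma \ref{symmetry_of_s_with_j}) while the two-form $g((\nabla_VJ){}\cdot{},{}\cdot{})$ lies in $[\![\Lambda^{2,0}]\!]$, one has $\sum_i\widehat{R}(W,X,E_i,(\nabla_VJ)E_i)=0$. Differentiating this vanishing identity along $U$, setting $U=V=E_j$ and summing over $j$ splits it into two pieces: a piece $\sum_{i,j}\widehat{R}(W,X,E_i,(\nabla^2_{E_j,E_j}J)E_i)$, which via \eqref{another_formula_ricci} acquires the weight $g((\Ric-\Ricc)E_i,E_j)$ and, after substituting the explicit expression \eqref{characteristic_tensor_with_curvature} for $\widehat{R}$ with $X=JW$ and applying the first Bianchi identity, yields exactly the combination $R(W,E_i,W,E_j)-5R(W,E_i,JW,JE_j)$; and a piece $\sum_{i,j}\nabla_{E_j}\widehat{R}(W,JW,E_i,(\nabla_{E_j}J)E_i)$, which is shown to vanish --- and it is only here that \eqref{cyclic_sum_last_result} enters, applied after the derivative of $\widehat{R}$ has been paired with the totally skew tensor $\nabla\sigma$ and antisymmetrised. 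Polarisation in $X=JW$ then gives the statement. Your proposal contains neither the vanishing identity that seeds the whole computation nor the mechanism (expansion of $\widehat{R}$ via \eqref{characteristic_tensor_with_curvature}) that produces the coefficient $-5$; asserting that the ratio $1:(-5)$ is ``forced by comparing coefficient structure'' is not a derivation.

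There are also two concrete technical obstructions to your route. First, the contraction you describe (``set $V=E_i$, $W=E_j$ and sum the third entry against the frame'') is not a trace: summing $\nabla_{E_k}\widehat{R}(E_i,E_j,Y,Z)$ over $k$ with no second occurrence of $E_k$ is not a well-defined tensorial operation, and any genuine trace of the left-hand side of \eqref{cyclic_sum_last_result} produces divergence-type terms $\sum_k\nabla_{E_k}\widehat{R}(E_k,\cdot,\cdot,\cdot)$, i.e.\ first derivatives of curvature, which have no counterpart in the purely algebraic identity \eqref{main_formula_for_einstein} and do not cancel for free. Second, the reason the paper's $\nabla\widehat{R}$ contribution dies is not Lemma \ref{symmetries_of_s} alone but the fact that it appears contracted against $\nabla\sigma(E_j,E_i,E_k)$, which is totally skew, so only the cyclic antisymmetrisation of $\nabla\widehat{R}$ survives and \eqref{cyclic_sum_last_result} plus \eqref{cyclic_sum_sigma} kill it term by term. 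Without that pairing your ``Ricci-type traces'' of $\nabla\widehat{R}$ simply remain in the formula.
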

\begin{proof}
Since $\widehat{R} \in \Lambda^2 \otimes [\Lambda^{1,1}]$ by Lemma \ref{symmetry_of_s_with_j} and $JE_i = E_{n+i}$ for $i=1,\dots,n$, we have
\begin{align*}
\sum_{ i=1}^{2n} \widehat{R}(W,X,E_i,(\nabla_VJ)E_i) & = \tfrac{1}{2}\sum_{ i=1}^{2n} \widehat{R}(W,X,E_i,(\nabla_VJ)E_i)+\widehat{R}(W,X,JE_i,J(\nabla_VJ)E_i) \\
& = \tfrac{1}{2}\sum_{ i=1}^{ 2n} \widehat{R}(W,X,E_i,(\nabla_VJ)E_i)-\widehat{R}(W,X,JE_i,(\nabla_VJ)JE_i) \\
& = \tfrac{1}{2}\sum_{ i=1}^{n} \widehat{R}(W,X,E_i,(\nabla_VJ)E_i)-\widehat{R}(W,X,JE_i,(\nabla_VJ)JE_i) \\
& \qquad + \tfrac{1}{2}\sum_{ i=1}^{ n} \widehat{R}(W,X,JE_i,(\nabla_VJ)JE_i)-\widehat{R}(W,X,E_i,(\nabla_VJ)E_i)=0.
\end{align*}
We can thus differentiate the identity obtained with respect to a vector field $U$ viewing each summand on the left hand side as a function $p \mapsto \widehat{R}_p({}\cdot{},{}\cdot{},{}\cdot{},(\nabla_VJ)_p{}\cdot{})$:
\begin{equation}
\label{differential_s}
\sum_i \nabla_U \widehat{R}(W,X,E_i,(\nabla_VJ)E_i)+\widehat{R}(W,X,E_i,(\nabla_{U,V}^2J)E_i) = 0.
\end{equation}
Set $U=V=E_j$ and sum over $j=1,\dots,2n$. The second term in the latter sum becomes 
\begin{equation}
\label{sum_double_covariant_derivative_j}
\sum_{i,j}\widehat{R}(W,X,E_i,(\nabla_{E_j,E_j}^2J)E_i).
\end{equation}
By \eqref{another_formula_ricci}, sum \eqref{sum_double_covariant_derivative_j} becomes
\begin{align*}
\sum_{i,j}\widehat{R}(W,X,E_i,(\nabla_{E_j,E_j}^2J)E_i) & =-\sum_i\widehat{R}(W,X,E_i,(\Ric-\Ricc)JE_i) \\
& = -\sum_{i,j} \widehat{R}(W,X,E_i,g((\Ric-\Ricc)JE_i,JE_j)JE_j) \\
& = -\sum_{i,j} g((\Ric-\Ricc)E_i,E_j)\widehat{R}(W,X,E_i,JE_j).
\end{align*}
Set $X=JW$. Then $J$-invariance of $R$ and the first Bianchi identity give
\begin{align*}
\widehat{R}(W,JW,E_i,JE_j) & = \tfrac{1}{4}\bigl(3R(W,JW,E_i,JE_j)-2R(W,JW,JE_i,E_j)\\
& \qquad -R(JW,E_i,JW,E_j)-R(W,E_i,W,E_j))\bigr) \\
& = \tfrac{1}{4}\bigl(5R(W,JW,E_i,JE_j)-R(W,E_i,W,E_j)-R(W,JE_i,W,JE_j)\bigr) \\
& = \tfrac{1}{4}\bigl(5R(W,E_i,JW,JE_j)-5R(W,JE_j,JW,E_i) \\
& \qquad -R(W,E_i,W,E_j)-R(W,JE_i,W,JE_j)\bigr).
\end{align*}
Using  \eqref{characteristic_tensor_with_curvature} and \eqref{another_formula_ricci} we have (sums over $i$ and $j$)
\begin{align*}
\MoveEqLeft
\sum \widehat{R}(W,JW,E_i,(\nabla_{E_j,E_j}^2J)E_i) \\
& = -\sum g((\Ric-\Ricc)E_i,E_j)\widehat{R}(W,JW,E_i,JE_j) \\
& = \tfrac{1}{4}\sum g((\Ric-\Ricc)E_i,E_j)\bigl(-5R(W,E_i,JW,JE_j)+5R(W,JE_j,JW,E_i)\\
& \qquad \qquad \qquad \qquad \qquad \qquad \qquad +R(W,E_i,W,E_j)+R(W,JE_i,W,JE_j)\bigr).
\end{align*}
We now split this expression in four different sums where the indices $i,j$ always run from $1$ to $n$. Set $A \coloneqq \Ric-\Ricc$ and
\begin{align*}
L(E_i,E_j) & \coloneqq -5R(W,E_i,JW,JE_j)+5R(W,JE_j,JW,E_i) \\
H(E_i,E_j) & \coloneqq R(W,E_i,W,E_j)+R(W,JE_i,W,JE_j),
\end{align*}
so we can write $\sum_{i,j}\widehat{R}(W,JW,E_i,(\nabla_{E_j,E_j}^2J)E_i)$ as
\begin{align*}
& \tfrac{1}{4}\sum_{i,j=1}^{ n} \bigl(g(AE_i,E_j)(L+H)(E_i,E_j)+ g(AE_i,JE_j)(L+H)(E_i,JE_j) \\
& \qquad +g(AJE_i,E_j)(L+H)(JE_i,E_j) +g(AJE_i,JE_j)(L+H)(JE_i,JE_j)\bigr).
\end{align*}
The symmetries of $R$, its $J$-invariance and the identity $AJ = JA$ yield
\begin{align*}
\MoveEqLeft
\sum \widehat{R}(W,JW,E_i,(\nabla_{E_j,E_j}^2J)E_i) \\
& = \tfrac{1}{2}\sum_{ i,j=1}^{ n} \Bigl(g(AE_i,E_j)\bigl(L(E_j,E_i)+H(E_i,E_j)\bigr)+g(AE_i,JE_j)\bigl(L(E_i,JE_j)+H(E_i,JE_j)\bigr)\Bigr).
\end{align*}
Going back to our usual notation we find
\begin{align*}
\MoveEqLeft
\sum_{ i,j=1}^{2n}\widehat{R}(W,JW,E_i,(\nabla_{E_j,E_j}^2J)E_i) \\[-2pt]
& = \tfrac{1}{2}\sum_{ i,j=1}^{2n} g(AE_i,E_j)\bigl(R(W,E_i,W,E_j)-5R(W,E_i,JW,JE_j)\bigr) \\[-2pt]
& \qquad +\tfrac{1}{2}\sum_{ i,j=1}^{n} g(AE_i,JE_j)\bigl(R(W,E_i,W,JE_j)+5R(W,E_i,JW,E_j)\bigr) \\[-2pt]
& \qquad +\tfrac{1}{2}\sum_{ i,j=1}^{n} g(AJE_i,E_j)\bigl(R(W,JE_i,W,E_j)-5R(W,JE_i,JW,JE_j) \bigr) \\[-2pt]
& \qquad +\tfrac{1}{2}\sum_{ i,j=1}^{n} g(AJE_i,JE_j)\bigl(R(W,JE_i,W,JE_j)+5R(W,JE_i,JW,E_j)\bigr) \\[-2pt]
& = \tfrac{1}{2}\sum_{ i,j=1}^{2n} g(AE_i,E_j)\bigl(R(W,E_i,W,E_j)-5R(W,E_i,JW,JE_j)\bigr).
\end{align*}
Let us go back to \eqref{differential_s} and focus on the first term now. Setting again $U=V=E_j,X=JW$, applying Lemma \ref{symmetries_of_s}, and summing over $j$ (and $k$) from $1$ to $2n$ we have:
\begin{align*}
\sum_{i,j} \nabla_{E_j}\widehat{R}(W,JW,E_i,(\nabla_{E_j}J)E_i) & = \sum_{i,j,k} \nabla_{E_j}\widehat{R}(W,JW,E_i,g((\nabla_{E_j}J)E_i,E_k)E_k) \\
& = \sum_{i,j,k} \nabla\sigma(E_j,E_i,E_k)\nabla_{E_j}\widehat{R}(E_i,E_k,W,JW) \\
& = \tfrac{1}{2}\sum_{i<j<k}\nabla\sigma(E_i,E_j,E_k) \cyclicsum_{ i,j,k} \nabla_{E_i}\widehat{R}(E_j,E_k,W,JW).
\end{align*}
The sum $\cyclicsum_{ i,j,k}\nabla_{E_i}\widehat{R}(E_j,E_k,W,JW)$ actually vanishes: by formula \eqref{cyclic_sum_last_result}
\begin{align*}
\MoveEqLeft
\nabla_{E_i}\widehat{R}(E_j,E_k,W,JW)+\nabla_{E_j}\widehat{R}(E_k,E_i,W,JW)+\nabla_{E_k}\widehat{R}(E_i,E_j,W,JW) \\
& = \tfrac{1}{2}\cyclicsum_{ i,j,k} \bigl( g((\nabla_{E_k,W}^2J)JW,(\nabla_{E_i}J)E_j) + g((\nabla_{E_i,E_j}^2J)E_k,(\nabla_WJ)JW)\bigr) \\
& \qquad +\tfrac{1}{4}\cyclicsum_{ i,j,k} g((\nabla_{E_i,E_j}^2J)JW-(\nabla_{E_j,E_i}^2J)JW,(\nabla_{E_k}J)W)\\
& \qquad +\tfrac{1}{4}\cyclicsum_{ i,j,k} g((\nabla_{E_j,E_i}^2J)W-(\nabla_{E_i,E_j}^2J)W,(\nabla_{E_k}J)JW).
\end{align*}
Recall that $\nabla^2\sigma(W,X,Y,Z) = g((\nabla_{W,X}^2J)Y,Z)$. Applying \eqref{cyclic_sum_sigma} and simplifying we have
\begin{align*}
\MoveEqLeft
\nabla_{E_i}\widehat{R}(E_j,E_k,W,JW)+\nabla_{E_j}\widehat{R}(E_k,E_i,W,JW)+\nabla_{E_k}\widehat{R}(E_i,E_j,W,JW) \\
& = \tfrac{1}{2}\cyclicsum_{ i,j,k} \nabla^2\sigma(E_k,W,JW,(\nabla_{E_i}J)E_j) \\
& \qquad +\tfrac{1}{4}\cyclicsum_{ i,j,k} \bigl(\nabla^2\sigma(E_i,E_j,JW,(\nabla_{E_k}J)W)-\tfrac{1}{4}\nabla^2\sigma(E_j,E_i,JW,(\nabla_{E_k}J)W)\bigr) \\
& \qquad -\tfrac{1}{4}\cyclicsum_{ i,j,k} \bigl(\nabla^2\sigma(E_i,E_j,W,(\nabla_{E_k}J)JW)+\tfrac{1}{4}\nabla^2\sigma(E_j,E_i,W,(\nabla_{E_k}J)JW)\bigr)\\
& = \tfrac{1}{2}g((\nabla_WJ)(\nabla_{E_i}J)E_j,(\nabla_{E_k}J)W) +\tfrac{1}{2}g((\nabla_{E_k}J)E_i,(\nabla_WJ)(\nabla_{E_j}J)W) \\
& \qquad +\tfrac{1}{2}g((\nabla_WJ)(\nabla_{E_j}J)E_k,(\nabla_{E_i}J)W)+\tfrac{1}{2}g((\nabla_{E_i}J)E_j,(\nabla_WJ)(\nabla_{E_k}J)W) \\
& \qquad +\tfrac{1}{2}g((\nabla_{E_j}J)E_k,(\nabla_WJ)(\nabla_{E_i}J)W)+\tfrac{1}{2}g((\nabla_WJ)(\nabla_{E_k}J)E_i,(\nabla_{E_j}J)W) =0.
\end{align*}
Then $\sum_{ i,j=1}^{2n} \nabla_{E_j}\widehat{R}(W,JW,E_i,(\nabla_{E_i}J)E_j)=0$. Polarisation of \eqref{differential_s} with $X=JW$ concludes the proof.
\end{proof}
Let now $\Ric_g$ be the Ricci curvature $(2,0)$-tensor field of $g$. 
\begin{theorem}
Nearly K\"ahler six-manifolds are Einstein with positive scalar curvature.
\end{theorem}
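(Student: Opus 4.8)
The plan is to feed the constancy of $\mu$, in the precise form $\Ric-\Ricc = 4\mu^2\id$ established in \eqref{diff_ricci}, into the identity \eqref{main_formula_for_einstein} of Proposition \ref{main_result_for_einstein}. Working in dimension six ($2n=6$) and relative to a local orthonormal frame $\{E_i,JE_i\}$, formula \eqref{diff_ricci} gives $g((\Ric-\Ricc)E_i,E_j) = 4\mu^2\delta_{ij}$, so the double sum in \eqref{main_formula_for_einstein} collapses to a single one. Recalling Definition \ref{ricci_and_ricci_star}, one then recognises
\[
4\mu^2\sum_{i=1}^{6}\bigl(R(W,E_i,E_i,X)-5R(W,E_i,JE_i,JX)\bigr) = 4\mu^2\bigl(g(\Ric W,X)-5g(\Ricc W,X)\bigr) = 0
\]
for all vector fields $W,X$ on $M$.

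Since $M$ is strictly nearly K\"ahler, Lemma \ref{definition_mu} together with Proposition \ref{mu_constant} guarantees that $\mu$ is a \emph{positive} constant; in particular $4\mu^2\neq 0$, so we may divide and obtain $\Ric = 5\Ricc$. Substituting $\Ricc = \Ric - 4\mu^2\id$ from \eqref{diff_ricci} yields $\Ric = 5\Ric - 20\mu^2\id$, whence $\Ric = 5\mu^2\id$; equivalently, for the Ricci $(2,0)$-tensor of the Levi-Civita connection, $\Ric_g = 5\mu^2 g$. Taking the trace over the six-dimensional tangent space then gives scalar curvature $\tr\Ric_g = 30\mu^2$, which is strictly positive precisely because $\mu>0$. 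This establishes both claims of the theorem.

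\textbf{Main obstacle.} There is essentially no remaining obstacle: all the analytic content has already been absorbed into Proposition \ref{main_result_for_einstein} (built from the second Bianchi identity and the structure of $\widehat{R}$) and into the constancy of $\mu$ (Proposition \ref{mu_constant}). The only points that require care are bookkeeping ones: contracting \eqref{main_formula_for_einstein} correctly against the identity endomorphism, matching the two resulting contractions of $R$ with the definitions of $\Ric$ and $\Ricc$, and explicitly invoking strictness of the nearly K\"ahler structure to ensure $\mu\neq 0$, so that the final division — and the positivity of the scalar curvature — are legitimate.
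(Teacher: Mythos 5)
Your proposal is correct and follows essentially the same route as the paper: specialise Proposition \ref{main_result_for_einstein} to $n=3$, use $\Ric-\Ricc = 4\mu^2\id$ from \eqref{diff_ricci} to collapse the double sum, divide by $4\mu^2\neq 0$ to get $\Ric = 5\Ricc$, and substitute back to obtain $\Ric_g = 5\mu^2 g$. The bookkeeping (matching the contractions with Definition \ref{ricci_and_ricci_star} and the trace giving scalar curvature $30\mu^2>0$) is all in order.
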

\begin{proof}
Consider the six-dimensional case in Proposition \ref{main_result_for_einstein}, i.e.\ $n=3$. Identity \eqref{diff_ricci} states that $\Ric-\Ricc = 4\mu^2 \id$, with $\mu>0$ constant. Thus, since $g(E_i,E_j) = \delta_{ij}$,\eqref{main_formula_for_einstein} reduces to
$$\sum R(W,E_i,E_i,X)-5R(W,E_i,JE_i,JX)=0,$$
which is equivalent to saying $\Ric \negthinspace = 5 \Ricc$. Therefore, $\Ric-\Ricc = \Ric-\tfrac{1}{5}\Ric = 4\mu^2 \id$, namely $\Ric_g = 5\mu^2 g$, and $M$ is Einstein with positive scalar curvature.
\end{proof}

\section{Formulation in terms of PDEs}
\label{formulation_in_terms_of_pdes}

We now go through the details behind Theorem \ref{theorem_carrion}, following \cite[Section 4.3]{carrion} for this last part. It will be convenient to work on the complexified tangent bundle $T \otimes \mathbb{C}$ of $M$. We use the standard notations $T^{1,0}$ and $T^{0,1}$ for the eigenspaces of $J$ corresponding to the eigenvalues $i$ and $-i$ respectively, so that $T \otimes \mathbb{C} = T^{1,0} \oplus T^{0,1}$. All linear operations are extended by $\mathbb{C}$-linearity, bars denote complex conjugation.

A first step in the direction we want to take was Proposition \ref{first_equivalent_characterisations}, where we proved that having a nearly K\"ahler structure on $(M,g,J)$ is equivalent to saying $\nabla \sigma$ is a type $(3,0)+(0,3)$ form or that $d\sigma = 3\nabla\sigma$, for $\sigma = g(J{}\cdot{},{}\cdot{})$. We now give further characterisations.
\begin{lemma}
\label{other_equivalent_characterisations}
The following assertions hold:
\begin{enumerate}
\item $M$ is nearly K\"ahler if and only if $\nabla_X Y + \nabla_Y X \in T^{1,0}$ for $X,Y \in T^{1,0}$.
\item If $M$ is nearly K\"ahler then $\nabla_{\overline{X}} Y \in T^{1,0}$, for $X,Y \in T^{1,0}$.
\end{enumerate}
\end{lemma}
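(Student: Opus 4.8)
The plan is to translate the nearly K\"ahler condition of Definition \ref{nk_definition} and both assertions into statements about the endomorphism $\nabla_X J$ and the eigenspace splitting $T\otimes\mathbb C=T^{1,0}\oplus T^{0,1}$, and then let the bookkeeping of the two projections $\pi^{1,0},\pi^{0,1}$ do the work. First I would record two facts valid on any almost Hermitian manifold: differentiating $J^2=-\id$ shows $\nabla_X J$ anticommutes with $J$, hence sends $T^{1,0}$ to $T^{0,1}$ and vice versa; and, expanding $(\nabla_X J)Y=\nabla_X(JY)-J\nabla_X Y$ and separating $J$-eigenspaces, one gets $(\nabla_X J)Y=2i\,\pi^{0,1}\nabla_X Y$ whenever $Y\in T^{1,0}$ and $(\nabla_X J)\overline Y=-2i\,\pi^{1,0}\nabla_X\overline Y$ whenever $\overline Y\in T^{0,1}$. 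I would also recall that, by polarisation, the nearly K\"ahler condition is equivalent to $(\nabla_X J)Y+(\nabla_Y J)X=0$ for all real, hence (by $\mathbb C$-bilinearity) for all complex, vector fields.

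For item~(1): inserting $X,Y\in T^{1,0}$ into this polarised identity and using the first displayed formula, it reads $2i\,\pi^{0,1}(\nabla_X Y+\nabla_Y X)=0$, i.e.\ $\nabla_X Y+\nabla_Y X\in T^{1,0}$; this is at once the ``only if'' direction, and since the passage is an identity rather than an implication it also gives, from the hypothesis, $(\nabla_X J)Y+(\nabla_Y J)X=0$ for $X,Y\in T^{1,0}$, and by conjugation the same for $X,Y\in T^{0,1}$. To complete the ``if'' direction one must extend the vanishing of $(\nabla_X J)Y+(\nabla_Y J)X$ to all complex $X,Y$, so by bilinearity the only remaining case is the mixed one $X\in T^{1,0}$, $\overline Y\in T^{0,1}$; there $(\nabla_X J)\overline Y$ lies in $T^{1,0}$ while $(\nabla_{\overline Y}J)X$ lies in $T^{0,1}$, so the sum vanishes exactly when each summand does, i.e.\ exactly when $\nabla_X\overline Y\in T^{0,1}$, which is the complex conjugate of item~(2). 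I expect this mixed-type step to be the real obstacle, since the hypothesis only controls $\nabla\sigma$ on triples of $(1,0)$-vectors while this concerns its value on mixed triples; I would therefore deduce it from the full fact that $\nabla\sigma$ has type $(3,0)+(0,3)$ (Proposition~\ref{first_equivalent_characterisations}), in effect proving (1) and (2) in tandem.

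For item~(2): by Proposition~\ref{first_equivalent_characterisations} a nearly K\"ahler structure has $\nabla\sigma\in[\![\Lambda^{3,0}]\!]$, so $\nabla\sigma$ vanishes on any triple of vectors that are not all of the same type. Concretely, for $X,Y,Z\in T^{1,0}$, applying $J$ to the first and then to the second entry of $\nabla\sigma(\overline X,Y,Z)$ via \eqref{moving_j_around_eq} multiplies the value by $-i$ and by $+i$ respectively, forcing $\nabla\sigma(\overline X,Y,Z)=0$; likewise $\nabla\sigma(\overline X,Y,\overline Z)=0$ (also because $(\nabla_{\overline X}J)Y\in T^{0,1}$ pairs trivially with $T^{0,1}$). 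Hence $g((\nabla_{\overline X}J)Y,W)=0$ for every $W\in T\otimes\mathbb C$, so $(\nabla_{\overline X}J)Y=0$; since $(\nabla_{\overline X}J)Y=2i\,\pi^{0,1}\nabla_{\overline X}Y$, this says precisely $\nabla_{\overline X}Y\in T^{1,0}$. The one point to keep track of throughout is the behaviour of $g$ on the isotropic subspaces $T^{1,0}$ and $T^{0,1}$ and the consequent identification of each with the dual of the other, which is what makes ``pairs trivially with everything'' equivalent to ``is zero''.
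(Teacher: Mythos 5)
Your ``only if'' direction of item~(1) and your proof of item~(2) are correct and in substance identical to the paper's: the paper derives $J(\nabla_XY+\nabla_YX)=i(\nabla_XY+\nabla_YX)-(\nabla_XJ)Y-(\nabla_YJ)X$, which is your identity $(\nabla_XJ)Y=2i\,\pi^{0,1}\nabla_XY$ in disguise, and it proves~(2) by expanding $(\nabla_{\overline{X}}J)Y$ with \eqref{covariant_derivative_j}, while you use \eqref{moving_j_around_eq}; both identities presuppose the nearly K\"ahler condition, which is available in~(2), so either route is fine.

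The problem is the ``if'' direction of~(1), and you have located it exactly but not closed it. Your reduction is right: the hypothesis gives $(\nabla_XJ)Y+(\nabla_YJ)X=0$ only when $X,Y$ are both of type $(1,0)$ or both of type $(0,1)$, and the missing mixed case is precisely the quasi-K\"ahler condition $(\nabla_{\overline{X}}J)Y=0$, i.e.\ the statement of item~(2). But your proposed fix---invoking $\nabla\sigma\in[\![\Lambda^{3,0}]\!]$ from Proposition~\ref{first_equivalent_characterisations}---is circular: that containment is \emph{equivalent} to $M$ being nearly K\"ahler, which is the conclusion you are trying to reach, and item~(2) itself is only an implication out of the nearly K\"ahler hypothesis, so it cannot be fed back in. Moreover the mixed case genuinely does not follow from the hypothesis of~(1): that hypothesis annihilates only the non-totally-skew part of $\nabla\sigma$ on triples of like type (the $\mathcal{W}_2$-component of the intrinsic torsion) and puts no constraint on the mixed components ($\mathcal{W}_3\oplus\mathcal{W}_4$). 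Concretely, on any non-K\"ahler Hermitian six-manifold (the Iwasawa nilmanifold, say) the Koszul formula gives $2g(\nabla_XY,Z)=g([X,Y],Z)-g([X,Z],Y)-g([Y,Z],X)=0$ for $X,Y,Z\in T^{1,0}$, because $T^{1,0}$ is $g$-isotropic and, by integrability, closed under brackets; hence $\nabla_XY+\nabla_YX\in T^{1,0}$ although the manifold is not nearly K\"ahler. So your instinct that the mixed-type step is ``the real obstacle'' is correct in a stronger sense than you suggest: it is not fillable from the stated hypothesis. The paper's own proof of~(1) consists of the displayed identity followed by ``from which our first claim follows'' and passes over the same point; a correct version must either add $\nabla_{\overline{X}}Y\in T^{1,0}$ to the hypothesis of~(1) or extract that condition from the extra data present where the lemma is actually applied, namely the second structure equation in Theorem~\ref{modern_nK_conditions}.
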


\begin{proof}
For $X,Y \in T^{1,0}$ we have
\begin{align*}
J(\nabla_X Y + \nabla_Y X) & = \nabla_X JY + \nabla_Y JX - (\nabla_X J)Y - (\nabla_Y J)X \\
					& =  i(\nabla_X Y + \nabla_Y X) - (\nabla_X J)Y - (\nabla_Y J)X,
\end{align*}
from which our first claim follows. The second is a plain check that $(\nabla_{\overline{X}}J)Y=0$ using the definition of $(1,0)$-vector fields, as $J\nabla_{\overline{X}}Y = \nabla_{\overline{X}}JY-(\nabla_{\overline{X}}J)Y = i\nabla_{\overline{X}}Y-(\nabla_{\overline{X}}J)Y$: for if $X = A-iJA, Y = B-iJB$, then $(\nabla_{\overline{X}}J)Y$ equals
\begin{equation*}
(\nabla_{A+iJA}J)(B-iJB) = (\nabla_AJ)B-i(\nabla_AJ)JB + i(\nabla_{JA}J)B+(\nabla_{JA}J)JB = 0,
\end{equation*}
as it follows by applying \eqref{covariant_derivative_j}.
\end{proof}

\begin{lemma}
\label{prop1}
Let us consider $\{F_i\}_{i=1,2,3}$, a local orthonormal basis of $T^{1,0}$ on \nolinebreak{$M$.} Denote by $\{f^i \}_{i=1,2,3}$ its dual in $\Lambda^{1,0}$. The following facts are equivalent:
\begin{enumerate}
\item \label{1.4.1} $\nabla_X Y + \nabla_Y X \in T^{1,0}$ for $X,Y \in T^{1,0}$.
\item \label{1.4.2} There exists a constant, complex-valued function $\lambda$ such that $[F_i,F_j]^{0,1} = -\overline{\lambda} \overline{F}_k$, where $(i,j,k)$ is a cyclic permutation of $(1,2,3)$.
\item \label{1.4.3} There exists a constant, complex-valued function $\lambda$ such that $(df^i)^{0,2} = \lambda \overline{f}{}^j \wedge \overline{f}{}^k$, where $(i,j,k)$ is a cyclic permutation of $(1,2,3)$.
\end{enumerate}
\end{lemma}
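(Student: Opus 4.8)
The plan is to establish $(2)\Leftrightarrow(3)$ first, as a purely formal statement about the frame, and then $(1)\Leftrightarrow(2)$; throughout I use that assertion (1) is precisely the nearly K\"ahler condition by Lemma~\ref{other_equivalent_characterisations}(1), so once (1) holds everything proved in Section~\ref{curvature} is available. For $(2)\Leftrightarrow(3)$ I would go through Cartan's formula: since $f^i$ annihilates every $\overline F_j$, the only surviving term is $df^i(\overline F_j,\overline F_k)=-f^i\bigl([\overline F_j,\overline F_k]^{1,0}\bigr)$, and $[\overline F_j,\overline F_k]^{1,0}=\overline{[F_j,F_k]^{0,1}}$. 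Hence $(df^i)^{0,2}$ is just the conjugate of the $(0,1)$-parts of the brackets repackaged in the coframe, and a one-line check on cyclic indices turns $[F_i,F_j]^{0,1}=-\overline\lambda\,\overline F_k$ (all cyclic $(i,j,k)$) into $(df^i)^{0,2}=\lambda\,\overline{f}{}^{j}\wedge\overline{f}{}^{k}$ (all cyclic $(i,j,k)$) with the \emph{same} $\lambda$, and back; in particular constancy of $\lambda$ transfers verbatim.

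For $(1)\Leftrightarrow(2)$ I would first record that $(X,Y)\mapsto(\nabla_XY+\nabla_YX)^{0,1}$ is $C^\infty$-bilinear on sections of $T^{1,0}$, the offending term $(Yf)X$ having zero $(0,1)$-part since $X\in T^{1,0}$; thus (1) is equivalent to $(\nabla_{F_i}F_j+\nabla_{F_j}F_i)^{0,1}=0$ for all $i,j$. Writing $v^{0,1}=\sum_m g(v,F_m)\overline F_m$ — legitimate because $T^{1,0}$ is $g$-isotropic and $g(\overline F_l,F_m)=\delta_{lm}$ — and feeding $F_i,F_j,F_m$ into the Koszul formula, every term of the form $E\bigl(g(\cdot,\cdot)\bigr)$ drops out by isotropy and what is left is
\[
g\bigl(\nabla_{F_i}F_j+\nabla_{F_j}F_i,\,F_m\bigr)=-g\bigl([F_i,F_m]^{0,1},F_j\bigr)-g\bigl([F_j,F_m]^{0,1},F_i\bigr).
\]
Given (2), substituting the relations into the right-hand side makes the two contributions cancel, so it vanishes and (1) follows — this direction needs no constancy. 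Conversely, under (1) the displayed identity forces $c^{j}_{im}+c^{i}_{jm}=0$ for $c^{l}_{im}:=g([F_i,F_m]^{0,1},F_l)$; together with $c^{l}_{im}=-c^{l}_{mi}$ this makes $c$ totally skew in all three indices, so $[F_i,F_j]^{0,1}=\eta\,\overline F_k$ for a single complex function $\eta$ and all cyclic $(i,j,k)$. Put $\lambda:=-\overline\eta$.

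It then remains to see that $\lambda$ may be taken constant, and this is the only genuinely analytic point. Since (1) is the nearly K\"ahler condition, Proposition~\ref{mu_constant} furnishes a constant $\mu$ obeying the norm formula of Lemma~\ref{definition_mu}. On one hand, splitting the covariant derivative gives $(\nabla_{F_1}J)F_2=2i(\nabla_{F_1}F_2)^{0,1}$, and by (1) this equals $i[F_1,F_2]^{0,1}=i\eta\,\overline F_3$, of Hermitian square-norm $|\eta|^2$. On the other hand, expressing $F_1,F_2$ through a real orthonormal frame and using $(\nabla_{JA}J)=-J(\nabla_AJ)=(\nabla_AJ)J$, a routine expansion gives $(\nabla_{F_1}J)F_2=2\bigl((\nabla_{E_1}J)E_2\bigr)^{0,1}$, whose Hermitian square-norm equals $2\lVert(\nabla_{E_1}J)E_2\rVert^2=2\mu^2$ by Lemma~\ref{definition_mu}. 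Hence $|\eta|^2=2\mu^2$ is constant. Finally, replacing $F_1$ by $(\overline\eta/|\eta|)F_1$ — a modulus-one rescaling that keeps the frame unitary and alters neither (1) nor the shape of the identities in (2) and (3) — normalises $\eta$ to $\sqrt2\,\mu$, whence $\lambda=-\sqrt2\,\mu$. I expect this last step to be the main obstacle: the algebraic manipulations only produce the cyclic pattern with a \emph{function} $\lambda$, and upgrading it to a constant forces one to import the curvature input of Section~\ref{curvature} (constancy of $\mu$) together with the observation that the leftover phase of $\eta$ is a frame gauge one can simply fix away.
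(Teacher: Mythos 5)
Your proof is correct, and its overall architecture coincides with the paper's: $(2)\Leftrightarrow(3)$ via Cartan's formula, $(2)\Rightarrow(1)$ from the isotropy of $T^{1,0}$, and $(1)\Rightarrow(2)$ by showing the structure constants $g([F_i,F_j]^{0,1},F_k)$ are totally skew, with constancy of $\lambda$ ultimately imported from Proposition~\ref{mu_constant}. The genuine differences are local. For $(1)\Rightarrow(2)$ the paper identifies $2g(\nabla_{F_i}F_j,F_k)=-i\nabla\sigma(F_i,F_j,F_k)$ and invokes skew-symmetry of $\nabla\sigma$, whereas you derive total antisymmetry of $c^l_{im}$ purely from the symmetry hypothesis plus antisymmetry of the bracket; both are fine, yours being marginally more self-contained. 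The more substantive divergence is in the constancy step: the paper pins down $\lambda$ exactly, as $-i\sqrt{2}\mu$, by building the complex frame out of a real frame adapted to $\nabla J$ (namely $(\nabla_{E_1}J)E_2=\mu E_3$), while you only compute $\lvert\lambda\rvert=\sqrt{2}\mu$ and then remove the residual phase by the unitary rescaling $F_1\mapsto(\overline{\eta}/\lvert\eta\rvert)F_1$, which, as you correctly check, multiplies all three cyclic relations by the same unit factor and is smooth because $\mu>0$ in the strict nearly K\"ahler setting. These are two ways of making the same gauge choice — the lemma cannot hold with constant $\lambda$ for a completely arbitrary orthonormal frame, since a pointwise phase rotation destroys constancy, and both proofs resolve this by adapting the frame; the paper does it at the level of the real frame before computing, you do it after computing the modulus. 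Your version makes the frame-dependence of $\lambda$ more transparent; the paper's yields the explicit value of $\lambda$ used later in the proof of Theorem~\ref{modern_nK_conditions}.
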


\begin{proof}
Let us prove that \ref{1.4.2} and \ref{1.4.3} are equivalent first. Suppose $(df^i)^{0,2} = \lambda \overline{f}{}^j \wedge \overline{f}{}^k$ for some constant $\lambda \in \mathbb{C}$. Since type $(0,1)$ forms vanish on $(1,0)$ vectors and $(d\overline{f}{}^k)^{2,0} = \overline{(df^k)^{0,2}} = \overline{\lambda} f^i \wedge f^j$, we get
\begin{align*}
[F_i,F_j]^{0,1} & = \sum_{ k=1}^{3} \overline{f}{}^k([F_i,F_j])\overline{F}_k = \sum_{ k=1}^{3} (F_i(\overline{f}{}^k(F_j)) -F_j(\overline{f}{}^k(F_i))-d\overline{f}{}^k(F_i,F_j))\overline{F}_k \\
			& = -\sum_{ k=1}^{3} (d\overline{f}{}^k)^{2,0}(F_i,F_j)\overline{F}_k = -\overline{\lambda} \overline{F}_k.
\end{align*}

Conversely, assume $[F_i,F_j]^{0,1} = -\overline{\lambda} \overline{F}_k$ holds for some complex constant $\lambda$. If we set $$\overline{X} = \sum_{ \ell=1}^{ 3} a_{\ell}\overline{F}_{\ell}, \quad \overline{Y} = \sum_{\ell=1}^{3} b_{\ell} \overline{F}_{\ell},$$ using that $[\overline{F}_j,\overline{F}_k]^{1,0} = \overline{[F_j,F_k]^{0,1}} = -\lambda F_i$ we have
\begin{align*}
\lambda \overline{f}{}^j \wedge \overline{f}{}^k (\overline{X},\overline{Y}) & = \lambda(a_jb_k-b_ja_k) \\
& = -f^i \Bigl(\cyclicsum_{ i,j,k}(a_jb_k - b_ja_k)(-\lambda F_i) \Bigr) \\
& = -f^i \Bigl(\sum_{ j<k} (a_jb_k - b_ja_k)([\overline{F}_j,\overline{F}_k]^{1,0}) \Bigr) \\
& = -f^i([\overline{X},\overline{Y}]^{1,0}) \\
& = -\overline{X}(f^i(\overline{Y})) +\overline{Y}(f^i(\overline{X})) + df^i(\overline{X},\overline{Y}) \\
& = df^i (\overline{X},\overline{Y}).
\end{align*}
This yields our first equivalence.

Let us assume now that $[F_i,F_j]^{0,1} = -\overline{\lambda}\overline{F}_k$ for $\lambda \in \mathbb{C}$. We use that $g(\nabla_{F_j}F_k,F_k)=0$ to compute $g(\nabla_{F_1}F_2+\nabla_{F_2}F_1,F_i)$ for all $i=1,2,3$. We have 
\begin{align*}
g(\nabla_{F_1}F_2+\nabla_{F_2}F_1,F_1) & = g(\nabla_{F_1}F_2-\nabla_{F_2}F_1,F_1) \\
& = g([F_1,F_2],F_1) = -\overline{\lambda}g(\overline{F}_3,F_1) = 0. \\
g(\nabla_{F_1}F_2+\nabla_{F_2}F_1,F_2) & = g(-\nabla_{F_1}F_2+\nabla_{F_2}F_1,F_2) \\
& = g([F_2,F_1],F_2) = \overline{\lambda}g(\overline{F}_3,F_2) = 0. \\
g(\nabla_{F_1}F_2+\nabla_{F_2}F_1,F_3) & = g(\nabla_{F_1}F_2,F_3)+g(\nabla_{F_2}F_1,F_3) \\
& = -g(F_2,\nabla_{F_1}F_3)-g(F_1,\nabla_{F_2}F_3).
\end{align*}
Now note that $g(\nabla_{F_2}F_3-\nabla_{F_3}F_2,F_1) = g(\nabla_{F_3}F_1-\nabla_{F_1}F_3,F_2) = -\overline{\lambda}$. This yields 
\begin{align*}
-g(F_2,\nabla_{F_1}F_3)-g(F_1,\nabla_{F_2}F_3) & = -g(F_2,\nabla_{F_1}F_3)+\overline{\lambda}-g(F_1,\nabla_{F_3}F_2) \\
& = g(F_2,\nabla_{F_3}F_1-\nabla_{F_1}F_3)+\overline{\lambda} \\
& = -\overline{\lambda}+\overline{\lambda} = 0.
\end{align*}
The other cases are analogous and \ref{1.4.1} follows.

Finally, we prove that \ref{1.4.1} implies \ref{1.4.2}. Assuming $\nabla_X Y + \nabla_Y X \in T^{1,0}$ with $X,Y \in T^{1,0}$, we have 
$$g([F_i,F_j]^{0,1},F_k) = g([F_i,F_j],F_k) = g(\nabla_{F_i}F_j,F_k) - g(\nabla_{F_j}F_i,F_k) = 2g(\nabla_{F_i}F_j,F_k),$$
as the metric is of type $(1,1)$ and $\nabla_{F_i}F_j = -\nabla_{F_j}F_i + W,\, W \in T^{1,0}$ by assumption. The basis has type $(1,0)$, so
\begin{align*}
g(\nabla_{F_i}F_j, F_k) & = g(J\nabla_{F_i}F_j, JF_k) \\
& = g(\nabla_{F_i}JF_j - (\nabla_{F_i}J)F_j,JF_k) \\
& = -g(\nabla_{F_i}F_j, F_k) - g((\nabla_{F_i}J)F_j,iF_k),
\end{align*}
which implies $2g(\nabla_{F_i}F_j,F_k) = -i\nabla\sigma(F_i,F_j,F_k)$, and $g(\nabla_{F_i}F_j,F_k)$ is totally skew-symmetric in $i,j,k$. So we can write it as
\begin{equation*}
2g(\nabla_{F_i}F_j,F_k) = -\varepsilon_{ijk}\overline{\lambda}
\end{equation*}
for some complex valued function $\lambda$ on $M$, where $\varepsilon_{ijk}$ is the sign of the permutation $(i,j,k)$ and takes value $0$ when any two indices coincide. There remains to prove that $\lambda$ is constant. To this aim, take any real, local orthonormal set $\{E_1,JE_1,E_2,JE_2\}$. We put $(\nabla_{E_1}J)E_2 \coloneqq \mu E_3$, where $E_3$ is a unit vector and $\mu$ a non-negative real function satisfying \eqref{mu_local_to_global}. Then set $F_k \coloneqq (1/\negthinspace \sqrt{2})(E_k-iJE_k)$ in $T^{1,0}, k = 1,2,3$, and recall that $g(\overline{X},\overline{Y}) = \overline{g(X,Y)}$ and $\overline{\nabla_X Y} = \nabla_{\overline{X}}\overline{Y}$ for every $X,Y \in T \otimes \mathbb{C}$. Hence $-\lambda = 2g(\nabla_{\overline{F}_1}\overline{F}_2,\overline{F}_3)$. Here below we find the relationship between $\lambda$ and $\mu$:
\begin{align*}
-\lambda & = 2g(\nabla_{\overline{F}_1}\overline{F}_2,\overline{F}_3) \\
	       & = g(\nabla_{E_1}E_2 - \nabla_{JE_1}JE_2 + i(\nabla_{E_1}JE_2+\nabla_{JE_1}E_2),\overline{F}_3) \\
	       & = g(\nabla_{E_1}E_2 + iJ\nabla_{E_1}E_2,\overline{F}_3) + ig(\nabla_{JE_1}E_2+iJ\nabla_{JE_1}E_2,\overline{F}_3) \\
	       & \qquad + g(J(\nabla_{E_1}J)E_2 + i(\nabla_{E_1}J)E_2,\overline{F}_3). 
\end{align*}
Observe that $\nabla_{E_1}E_2 + iJ\nabla_{E_1}E_2$ and $\nabla_{JE_1}E_2+iJ\nabla_{JE_1}E_2$ are of type $(0,1)$, so the first two terms vanish, and expanding the last term we find
\begin{equation*}
g(J(\nabla_{E_1}J)E_2 + i(\nabla_{E_1}J)E_2,\overline{F}_3) = i\mu g(E_3-iJE_3,\overline{F}_3) = i\sqrt{2}\mu.
\end{equation*}
In Proposition \ref{mu_constant} we proved that $\mu$ is constant, so $\lambda$ is constant as well.
\end{proof}

\begin{theorem}
\label{modern_nK_conditions}

Let $(M,g,J)$ be an almost Hermitian six-manifold. Then $M$ is nearly K\"ahler if and only if there exist a complex three-form $\psi_{\mathbb{C}} = \psi_+ + i\psi_-$ and a constant function $\mu$ such that 
\begin{equation}
\label{first_nk_structure_equations}
d\sigma = 3\mu \psi_+, \qquad d\psi_- = -2\mu\sigma \wedge \sigma.
\end{equation}
\end{theorem}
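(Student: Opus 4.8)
The plan is to prove the two implications separately, using Proposition~\ref{first_equivalent_characterisations}, Lemmas~\ref{other_equivalent_characterisations} and~\ref{prop1}, the unitary connection $\widehat{\nabla}$ of Proposition~\ref{unitary_connection}, and the fact (Proposition~\ref{su3_connection}) that on a nearly K\"ahler six-manifold $\widehat{\nabla}$ is an $\mathrm{SU}(3)$-connection; throughout I read $\psi_{\mathbb{C}}$ as a complex volume form, i.e.\ a nowhere-vanishing $(3,0)$-form for $J$ as in Theorem~\ref{theorem_carrion}, and I recall $\mu\neq 0$ since the structure is strict. For the first direction, assume $(\sigma,\psi_{\pm})$ satisfies \eqref{first_nk_structure_equations} with $\mu$ a nonzero constant. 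The quadruple $(g,J,\sigma,\psi_{\mathbb{C}})$ reduces the structure group to $\mathrm{SU}(3)$, so around each point there is a local unitary coframe $\{f^i\}$, dual to a frame $\{F_i\}$ as in Lemma~\ref{prop1}, with $\psi_{\mathbb{C}} = c_0\,f^1\wedge f^2\wedge f^3$ for a fixed constant $c_0$ and with $\sigma$ equal, up to a nonzero real constant, to $i\sum_k f^k\wedge\overline{f}{}^k$. Differentiating $d\sigma = 3\mu\psi_+$ gives $d\psi_+ = \tfrac{1}{3\mu}\,d(d\sigma) = 0$, so $d\psi_{\mathbb{C}} = i\,d\psi_- = -2i\mu\,\sigma\wedge\sigma$ is of pure type $(2,2)$. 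Expanding $d\psi_{\mathbb{C}} = c_0\,d(f^1\wedge f^2\wedge f^3)$ and separating bidegrees, its $(2,2)$-component involves only the forms $(df^i)^{0,2}$; comparing it with the explicit expression for $\sigma\wedge\sigma$ forces $(df^i)^{0,2} = \lambda\,\overline{f}{}^j\wedge\overline{f}{}^k$ for $(i,j,k)$ a cyclic permutation of $(1,2,3)$, with $\lambda$ a constant built from $\mu$ and $c_0$. This is exactly condition~\ref{1.4.3} of Lemma~\ref{prop1}, hence~\ref{1.4.1} holds, i.e.\ $\nabla_X Y + \nabla_Y X\in T^{1,0}$ for $X,Y\in T^{1,0}$, and Lemma~\ref{other_equivalent_characterisations}(1) shows $M$ is nearly K\"ahler.

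Conversely, let $M$ be nearly K\"ahler. By Proposition~\ref{first_equivalent_characterisations}, $d\sigma = 3\nabla\sigma$ is a section of $[\![\Lambda^{3,0}]\!]$, nowhere zero because $\mu>0$ (Lemma~\ref{definition_mu}, Proposition~\ref{mu_constant}). Since $\omega\mapsto\omega+\overline{\omega}$ is a real-linear isomorphism $\Lambda^{3,0}\to[\![\Lambda^{3,0}]\!]$, there is a unique $(3,0)$-form $\psi_{\mathbb{C}}$ with $\re\psi_{\mathbb{C}} = \tfrac{1}{3\mu}\,d\sigma =: \psi_+$; I set $\psi_- := \im\psi_{\mathbb{C}}$. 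Working in a local unitary coframe and using the relation between $\lambda$ and $\mu$ obtained inside the proof of Lemma~\ref{prop1}, one finds $d\sigma$ to be a constant multiple of $\re(f^1\wedge f^2\wedge f^3)$, hence $\psi_{\mathbb{C}}$ a constant multiple of $f^1\wedge f^2\wedge f^3$; in particular $\psi_{\mathbb{C}}$ has constant norm and $(\sigma,\psi_{\pm})$ is an $\mathrm{SU}(3)$-structure refining $(g,J)$. The first equation $d\sigma = 3\mu\psi_+$ holds by construction, and $d\psi_+ = 0$.

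It remains to prove $d\psi_- = -2\mu\,\sigma\wedge\sigma$. By Proposition~\ref{su3_connection} the form $\psi_{\mathbb{C}}$ is $\widehat{\nabla}$-parallel, so from $\nabla = \widehat{\nabla} + \tfrac12 J(\nabla J)$ we get $\nabla_Z\psi_{\mathbb{C}} = \tfrac12\bigl(J(\nabla_Z J)\bigr)\cdot\psi_{\mathbb{C}}$, whence in a local $\mathrm{SU}(3)$-coframe
\[
d\psi_{\mathbb{C}} = \tfrac12\sum_k\Bigl(f^k\wedge\bigl((J(\nabla_{F_k}J))\cdot\psi_{\mathbb{C}}\bigr) + \overline{f}{}^k\wedge\bigl((J(\nabla_{\overline{F}_k}J))\cdot\psi_{\mathbb{C}}\bigr)\Bigr).
\]
Since $J(\nabla_Z J)$ anticommutes with $J$ it lies in $\mathfrak{u}(3)^{\perp}$, so it carries $\psi_{\mathbb{C}}\in\Lambda^{3,0}$ into $\Lambda^{2,1}$; moreover, by Lemma~\ref{other_equivalent_characterisations}(2) and its conjugate, $(\nabla_{\overline{X}}J)Y = 0 = (\nabla_X J)\overline{Y}$ for $X,Y\in T^{1,0}$. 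The second relation makes $(J(\nabla_{F_k}J))\cdot\psi_{\mathbb{C}}$ vanish identically, so $d\psi_{\mathbb{C}}$ has no $(3,1)$-part and $d\psi_- = -i\,d\psi_{\mathbb{C}}$ is of pure type $(2,2)$. Finally, as $\nabla J$ takes values in $[\![\Lambda^{3,0}]\!]$, an $\mathrm{SU}(3)$-trivial module, Schur's lemma forces the equivariant assignment $\overline{F}_k\mapsto(J(\nabla_{\overline{F}_k}J))\cdot\psi_{\mathbb{C}}$ to land in the summand $\sigma\wedge\Lambda^{1,0}$ of $\Lambda^{2,1}$; writing $(J(\nabla_{\overline{F}_k}J))\cdot\psi_{\mathbb{C}} = \kappa\,\sigma\wedge f^k$ and summing gives $d\psi_{\mathbb{C}} = \tfrac{\kappa}{2}\,\sigma\wedge\sum_k\overline{f}{}^k\wedge f^k$, a constant multiple of $\sigma\wedge\sigma$. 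That constant equals $-2i\mu$: one pins it down either from the norm of $\nabla J$ (Lemma~\ref{definition_mu}) or, more cleanly, from $0 = d(\sigma\wedge\psi_-) = 3\mu\,\psi_+\wedge\psi_- + \sigma\wedge d\psi_-$ together with the $\mathrm{SU}(3)$-normalisation $\psi_+\wedge\psi_-\propto\sigma^3$. This gives $d\psi_- = -2\mu\,\sigma\wedge\sigma$ and completes the proof.

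I expect the main obstacle to be precisely this last step: that $d\psi_-$ is \emph{exactly} $-2\mu\,\sigma\wedge\sigma$, with no primitive $(1,1)$-component and with the correct constant, does not follow from $d\sigma\in[\![\Lambda^{3,0}]\!]$ alone. The load-bearing input is the $\widehat{\nabla}$-parallelism of $\psi_{\mathbb{C}}$ (Proposition~\ref{su3_connection}); once that is in hand, the representation theory of $\mathrm{SU}(3)$ and the vanishing relations of Lemma~\ref{other_equivalent_characterisations}(2) do the rest. A purely frame-theoretic alternative, staying within Lemma~\ref{prop1}, would instead require pinning down the components $(df^i)^{2,0}$ and $(df^i)^{1,1}$, which is considerably more laborious.
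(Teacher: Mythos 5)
Your proof is correct in substance, but the forward implication takes a genuinely different route from the paper's. For the converse (PDEs $\Rightarrow$ nearly K\"ahler) you follow the same skeleton as the paper --- extract $(df^i)^{0,2}=\lambda\overline{f}{}^j\wedge\overline{f}{}^k$ and invoke Lemma~\ref{prop1}\ref{1.4.3}$\Rightarrow$\ref{1.4.1} and Lemma~\ref{other_equivalent_characterisations} --- differing only in mechanism: you compare $(2,2)$-coefficients directly (which works, since the nine coefficients of the $(df^i)^{0,2}$ map bijectively onto $\Lambda^{2,2}$), whereas the paper uses injectivity of $\psi_{\mathbb{C}}\wedge{}\colon\Lambda^{0,2}\to\Lambda^{3,2}$. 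For the forward implication the paper computes $d\psi_{\mathbb{C}}$ explicitly in the unitary coframe from Lemma~\ref{prop1}\ref{1.4.3}, kills the $(3,1)$-part by the reality constraint $d\psi_{\mathbb{C}}=-d\overline{\psi}_{\mathbb{C}}$ coming from $d\psi_+=0$, and reads off the $(2,2)$-part as $-2i\mu\sigma\wedge\sigma$; you instead derive $d\psi_{\mathbb{C}}=\sum_a e^a\wedge\nabla_{E_a}\psi_{\mathbb{C}}$ from $\widehat{\nabla}\psi_{\mathbb{C}}=0$, kill the $(3,1)$-part via $(\nabla_XJ)\overline{Y}=0$ (the conjugate of Lemma~\ref{other_equivalent_characterisations}(2) --- this is the right justification, since $J(\nabla_{F_k}J)$ then annihilates $T^{0,1}$ and maps $T^{1,0}$ into $T^{0,1}$, so its derivation action kills the $(3,0)$-form), and identify the $(2,2)$-part by invariant theory plus $d(\sigma\wedge\psi_-)=0$ and $\psi_+\wedge\psi_-=\tfrac23\sigma^3$. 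This buys a more conceptual argument at the price of two points needing care: (i) Proposition~\ref{su3_connection} is stated \emph{after} this theorem in the paper, so you must check there is no circularity --- there is none, since its content reduces to $\widehat{\nabla}(\nabla\sigma)=0$, which rests only on \eqref{cyclic_sum_sigma}, and your $\psi_+$ is \emph{defined} as $\mu^{-1}\nabla\sigma$ while $\psi_-=-J\psi_+$ is pointwise $\mathrm{SU}(3)$-linear algebra; (ii) the ``Schur's lemma'' step should be phrased as: $d\psi_{\mathbb{C}}$ is pointwise a universal $\mathrm{SU}(3)$-equivariant contraction of $\mu$, $\psi_{\mathbb{C}}$ and $g$, hence an invariant $4$-form, and $(\Lambda^4)^{\mathrm{SU}(3)}=\mathbb{R}\,\sigma^2$ because $[\Lambda^{1,1}_0]\cong\mathfrak{su}(3)$ and $[\![\Lambda^{2,0}]\!]$ carry no invariants. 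You are also right to flag, as the paper does implicitly, that in the converse $\psi_{\mathbb{C}}$ must be read as a nowhere-vanishing $(3,0)$-form with the standard normalisation.
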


\begin{proof}
Assume $M$ is nearly K\"ahler. Using the local orthonormal basis as in Lemma \ref{prop1}, we can write locally $\sigma = i\sum_{ k=1}^{3} f{}^k \wedge \overline{f}{}^k$, where $f^k = (1/\negthinspace \sqrt{2})(e^k+iJe^k)$. Let us define 
\begin{equation*}
\psi_{\mathbb{C}} = \psi_+ + i\psi_- \coloneqq 2\sqrt{2}f^1 \wedge f^2 \wedge f^3.
\end{equation*}
We know by Proposition \ref{first_equivalent_characterisations} that $M$ nearly K\"ahler implies $d\sigma \in [\![\Lambda^{3,0}]\!]$. We thus calculate its $(3,0)+(0,3)$ part.
\begin{equation*}
(id\sigma)^{3,0} = -\Bigl(\sum_{ k=1}^{3} df^k \wedge \overline{f}{}^k - f^k \wedge d\overline{f}{}^k\Bigr)^{3,0} = \sum_{ k=1}^{3} (f^k \wedge d\overline{f}{}^k)^{3,0} = \sum_{ k=1}^{3} f^k \wedge (d\overline{f}{}^k)^{2,0}.
\end{equation*}
Lemma \ref{prop1} implies
\begin{align*}
(id\sigma)^{3,0} & = \sum_{ k=1}^{3} f^k \wedge (d\overline{f}{}^k)^{2,0} = \overline{\lambda} \cyclicsum_{ 1,2,3}f^1 \wedge f^2 \wedge f^3 = \tfrac{3}{2\sqrt{2}}\overline{\lambda} \psi_{\mathbb{C}}.
\end{align*}
Similarly, $(id\sigma)^{0,3} = -(3/2\sqrt{2})\lambda \overline{\psi_{\mathbb{C}}}$. We found $\lambda = -i\sqrt{2}\mu$, so $id\sigma = (id\sigma)^{3,0}+(id\sigma)^{0,3} = 3i\mu\psi_+$.
This implies $0 = d\psi_+$, hence $d\psi_{\mathbb{C}} = -d\overline{\psi}_{\mathbb{C}}$. Differentiating $\psi_{\mathbb{C}}$ we find
\begin{align}
\label{differential_psi_c}
d\psi_{\mathbb{C}} & = 2\sqrt{2}\bigl(df^1 \wedge f^2 \wedge f^3 - f^1 \wedge df^2 \wedge f^3 + f^1 \wedge f^2 \wedge df^3\bigr) \nonumber \\
& = 2\sqrt{2}\bigl((df^1)^{1,1} \wedge f^2 \wedge f^3 + (df^1)^{0,2} \wedge f^2 \wedge f^3 \nonumber \\ 
& \qquad - f^1 \wedge (df^2)^{1,1} \wedge f^3 -f^1 \wedge (df^2)^{0,2} \wedge f^3 \nonumber \\
& \qquad +f^1 \wedge f^2 \wedge (df^3)^{1,1} + f^1 \wedge f^2 \wedge (df^3)^{0,2}\bigr) \in \Lambda^{3,1} + \Lambda^{2,2}.
\end{align}
With similar computations one can see that $d\overline{\psi}_{\mathbb{C}} \in \Lambda^{1,3} + \Lambda^{2,2}$. We proved that $d\psi_{\mathbb{C}} = -d\overline{\psi}_{\mathbb{C}}$, so the $(3,1)$ part of $d\psi_{\mathbb{C}}$ vanishes. We then have
\begin{equation}
\label{psi_-_sigma_squared}
id\psi_- = 2\sqrt{2}\lambda \sum_{ j<k}\overline{f}{}^j \wedge \overline{f}{}^k \wedge f^j \wedge f^k = -2i\mu\sigma \wedge \sigma
\end{equation}
and the first implication is done.

Conversely, given $d\sigma = 3\mu \psi_+$ and $d\psi_- = -2\mu \sigma \wedge \sigma$, it is enough to prove that $(df^i)^{0,2}= \lambda \overline{f}{}^j \wedge \overline{f}{}^k$ for $(i,j,k)$ cyclic permutation of $(1,2,3)$ and some constant $\lambda \in \mathbb{C}$. To get it, we first see that
\begin{equation*}
\psi_{\mathbb{C}} \wedge (df^i)^{0,2} = \psi_{\mathbb{C}} \wedge df^i = d\psi_{\mathbb{C}} \wedge f^i = id\psi_- \wedge f^i = \psi_{\mathbb{C}} \wedge \lambda(\overline{f}{}^j \wedge \overline{f}{}^k).
\end{equation*}
Now observe that the map $\Lambda^{0,2} \rightarrow \Lambda^{3,2}$ given by the wedge product with $\psi_{\mathbb{C}}$ is injective. This implies $(df^i)^{0,2} = \lambda \overline{f}{}^j \wedge \overline{f}{}^k$.

Lastly, note that the form $\psi_{\mathbb C}$ is globally defined (cf.\ \cite{bryant_2}, Section 2), and we can then conclude.
\end{proof}

\begin{remark}
\label{alternative_proof}
We can give a proof of the first implication in Theorem \ref{modern_nK_conditions} without having $\mu$ constant a priori.
Assume $(M,g,J)$ is nearly K\"ahler. 
Note that in the proof of Lemma \ref{prop1} we use Proposition~\ref{mu_constant} to show that $\lambda$ is constant only at the end. Without relying on the latter result,
 Lemma~\ref{prop1} (in particular point~\ref{1.4.3}) holds without the constancy of~$\lambda$. The relation between 
$\lambda$ and $\mu$ is unchanged. 
Now the initial part of Theorem \ref{modern_nK_conditions} holds (except the constancy of $\lambda$) and yields $d\sigma=3\mu \psi_+$,
where $\psi_+$ is the real part of the (only locally defined) $(3,0)$-form $\psi_{\mathbb C}$ introduced at the beginning of the proof of the same theorem,
and $\mu$ is a not identically zero function. 
Clearly $d(\mu\psi_+)=0$, so $d(\mu\psi_{\mathbb C}) = id(\mu \psi_-) = -d(\mu \overline{\psi_{\mathbb C}})$.
By the same calculation as in \eqref{differential_psi_c}, $d(\mu \psi_{\mathbb C})$ turns out to be of type $(2,2)$, hence the 
only non-vanishing part of $d(\mu\psi_-)$ is the $(2,2)$-part. Now observe that
$(d\mu \wedge \psi_-)^{2,2}=0$ by $(d\mu \wedge \psi_{\mathbb C})^{2,2}=0$. Therefore, identity \eqref{psi_-_sigma_squared} implies
\begin{equation*}
d(\mu\psi_-)=d(\mu\psi_-)^{2,2} = (d\mu \wedge \psi_- + \mu d\psi_-)^{2,2} = \mu(d\psi_-)^{2,2} = -2\mu^2\sigma^2.
\end{equation*}
But then, since $\sigma \wedge \psi_+=0$, we find
\begin{align*}
0 = d^2(\mu \psi_-) & = d\mu^2 \wedge \sigma^2+2\mu^2\sigma \wedge d\sigma \\
& = d\mu^2 \wedge \sigma^2 + 6\mu^3\sigma \wedge \psi_+ \\
& = d\mu^2 \wedge \sigma^2,
\end{align*}
whence $\mu$ is locally constant, as $\sigma$ is non-degenerate. Connectedness of $M$ implies that $\mu$ is globally constant.
This proves $d\sigma = 3\mu\psi_+$ and $d\psi_- = -2\mu \sigma \wedge \sigma$ with $\mu$ constant. Now the fact that nearly K\"ahler six-manifolds are Einstein follows from the connection with $\mathrm{G}_2$-holonomy
cones, which are Ricci-flat, or from the computation of the Ricci tensor for $\mathrm{SU}(3)$-structures, see Bedulli--Vezzoni \cite{bedulli_vezzoni}.
\end{remark}

\begin{remark}
\label{two_equivalent_definitions}
We can rescale our basis so that $\sigma \mapsto \widetilde{\sigma}\coloneqq \mu^2\sigma$ and $\psi_{\pm} \mapsto \widetilde{\psi}_{\pm} \coloneqq \mu^3\psi_{\pm}$. Then 
\begin{equation*}
d\widetilde{\sigma} = 3\widetilde{\psi}_+, \qquad 
d\widetilde{\psi}_- = -2\widetilde{\sigma} \wedge \widetilde{\sigma}.
\end{equation*}
\end{remark}
As already discussed in Remark \ref{alternative_proof}, Theorem \ref{modern_nK_conditions} provides us with a characterisation of nearly K\"ahler six-manifolds in terms of an $\mathrm{SU}(3)$-structure.
\begin{definition}
\label{modern_def_nk}
Let $(M,g,J)$ be an almost Hermitian six-dimensional manifold with an $\mathrm{SU}(3)$-structure $(\sigma = g(J{}\cdot{},{}\cdot{}),\psi_{\mathbb{C}}=\psi_++i\psi_-)$. We say that $M$ is \emph{nearly K\"ahler} if and only if 
\begin{equation*}
d\sigma = 3\psi_+, \qquad d\psi_- = -2\sigma \wedge \sigma,
\end{equation*}
up to homothety.
\end{definition}
Observe that locally $\sigma$ and $\psi_{\mathbb{C}}$ were expressed in terms of type $(1,0)$ vectors $f^i$ as $\sigma = i\sum_{ k=1}^{3} f^k \wedge \overline{f}{}^k$ and $\psi_{\mathbb{C}} = 2\sqrt{2} f^1\wedge f^2 \wedge f^3$, thus giving the real models
\begin{align}
\label{sigma_model} \sigma & = e^1\wedge Je^1 + e^2\wedge Je^2 + e^3\wedge Je^3, \\
\label{psi_+_model} \psi_+ & = e^1 \wedge e^2 \wedge e^3 - Je^1 \wedge Je^2 \wedge e^3 - e^1\wedge Je^2 \wedge Je^3 - Je^1 \wedge e^2 \wedge Je^3, \\
\label{psi_-_model} \psi_- & = e^1 \wedge e^2 \wedge Je^3 - Je^1 \wedge Je^2 \wedge Je^3 + e^1\wedge Je^2 \wedge e^3 + Je^1 \wedge e^2 \wedge e^3,
\end{align}
which are obtained by the definition $f^k \coloneqq (1/\negthinspace \sqrt{2})(e^k+iJe^k),k=1,2,3$. By $Je^i = -e^i \circ J$, expressions \eqref{psi_+_model} and \eqref{psi_-_model}, the relation $\psi_- = -\psi_+({}\cdot{},{}\cdot{},J{}\cdot{})$ readily follows. On the other hand by equations \eqref{first_nk_structure_equations} and Proposition \ref{first_equivalent_characterisations} we have $d\sigma = 3\mu \psi_+ = 3\nabla \sigma$, so $\mu \psi_+ = \nabla\sigma$. But since $\nabla \sigma \in \Lambda^1 \otimes [\![\Lambda^{2,0}]\!]$ we find
\begin{align*}
\psi_-(X,Y,Z) & = -\psi_+(X,Y,JZ) = -\mu^{-1}\nabla\sigma(X,Y,JZ) = -\mu^{-1}\nabla\sigma(JZ,X,Y) \\
& = \mu^{-1}\nabla\sigma(JZ,JX,JY) = \mu^{-1}\nabla\sigma(JX,JY,JZ) = -J\psi_+(X,Y,Z).
\end{align*}
Therefore $\psi_- = -J\psi_+$. 
\begin{remark}
To derive $\psi_- = -J\psi_+$ we have used the nearly K\"ahler structure on $M$ for convenience. However, such identity is 
not special of our set-up and can be simply derived by $\mathrm{SU}(3)$-linear algebra as in \cite{chiossi_salamon}.
\end{remark}
\begin{remark}
\label{hodge_star_relation_volume}
Let us set $\mathrm{vol} \coloneqq e^1 \wedge Je^1 \wedge e^2 \wedge Je^2 \wedge e^3 \wedge Je^3$. A straightforward calculation of $\psi_+ \wedge \psi_-$ and $\sigma \wedge \psi_{\pm}$ gives 
\begin{equation*}
\psi_+\wedge \psi_- = 4\mathrm{vol} = \tfrac{2}{3}\sigma^3, \qquad \sigma \wedge \psi_{\pm} = 0.
\end{equation*}
Since $g(\psi_+,\psi_+) = 4$ the first equation tells us that $\psi_+ \wedge \psi_- = 4\mathrm{vol} = g(\psi_+,\psi_+)\mathrm{vol} = \psi_+ \wedge {*}\psi_+$, so by uniqueness of ${*}\psi_+$ we deduce ${*}\psi_+ = \psi_-$. 
\end{remark}

Recall that $\widehat{\nabla} \coloneqq \nabla - \tfrac{1}{2}J(\nabla J)$ is a $\mathrm{U}(3)$-connection by Proposition \ref{unitary_connection}. 
\begin{prop}
\label{su3_connection}
$\widehat{\nabla}$ is an $\mathrm{SU}(3)$-connection.
\end{prop}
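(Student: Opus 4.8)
The plan is to exhibit the complex volume form $\psi_{\mathbb{C}}=\psi_++i\psi_-$ constructed in the proof of Theorem \ref{modern_nK_conditions} as a $\widehat{\nabla}$-parallel section. By Proposition \ref{unitary_connection} we already have $\widehat{\nabla}g=0$ and $\widehat{\nabla}J=0$, hence $\widehat{\nabla}\sigma=0$; since $\widehat{\nabla}$ preserves the bidegree splitting of forms it preserves the complex line bundle $\Lambda^{3,0}$, so there is a complex-valued $1$-form $\zeta$ with $\widehat{\nabla}_X\psi_{\mathbb{C}}=\zeta(X)\,\psi_{\mathbb{C}}$ for every $X$. The whole argument then reduces to proving $\zeta\equiv 0$: once this is known, $\psi_{\pm}$ are $\widehat{\nabla}$-parallel alongside $g,J,\sigma$, so the holonomy of $\widehat{\nabla}$ lies in $\Stab(g_0,J_0,(\psi_{\mathbb{C}})_0)=\mathrm{SU}(3)$, which is exactly what it means for $\widehat{\nabla}$ to be an $\mathrm{SU}(3)$-connection.

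First I would observe that $\zeta$ is purely imaginary. The $6$-form $\psi_{\mathbb{C}}\wedge\overline{\psi}_{\mathbb{C}}$ equals $-2i\,\psi_+\wedge\psi_-=-8i\,\mathrm{vol}$ by Remark \ref{hodge_star_relation_volume}, and $\mathrm{vol}=\tfrac16\sigma^3$ is $\widehat{\nabla}$-parallel since $\widehat{\nabla}\sigma=0$; differentiating $\psi_{\mathbb{C}}\wedge\overline{\psi}_{\mathbb{C}}$ then gives $0=\bigl(\zeta(X)+\overline{\zeta(X)}\bigr)\psi_{\mathbb{C}}\wedge\overline{\psi}_{\mathbb{C}}$, whence $\zeta+\overline{\zeta}=0$.

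The crux is a type count. The torsion of $\widehat{\nabla}$ is $\widehat{\nabla}_XY-\widehat{\nabla}_YX-[X,Y]=-J(\nabla_XJ)Y=-N(X,Y)$ by Proposition \ref{nijenhuis_nk}; since $\nabla J$ anti-commutes with $J$ and $(\nabla_{\overline{X}}J)Y=0$ for $X,Y\in T^{1,0}$ (Lemma \ref{other_equivalent_characterisations}), one checks that $N$ maps $\Lambda^2 T^{1,0}$ into $T^{0,1}$, maps $\Lambda^2 T^{0,1}$ into $T^{1,0}$, and vanishes on mixed pairs $T^{1,0}\otimes T^{0,1}$. Now apply the standard formula expressing $d\psi_{\mathbb{C}}$ as the totally antisymmetric part of $\widehat{\nabla}\psi_{\mathbb{C}}$ plus the torsion correction $\sum_{i<j}(-1)^{i+j}\psi_{\mathbb{C}}(T(X_i,X_j),\dots)$. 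The antisymmetrisation of $\widehat{\nabla}\psi_{\mathbb{C}}=\zeta\otimes\psi_{\mathbb{C}}$ is $\zeta\wedge\psi_{\mathbb{C}}$, which lies in $\Lambda^{3,1}$ because the $(1,0)$-part of $\zeta$ is wiped out by $\Lambda^{4,0}=0$; the torsion correction survives only when $T(X_i,X_j)\in T^{1,0}$ (its $(0,1)$-part being annihilated by the $(3,0)$-form $\psi_{\mathbb{C}}$), which forces $X_i,X_j\in T^{0,1}$ and the two remaining arguments into $T^{1,0}$, so the torsion correction is of pure type $(2,2)$. On the other hand $d\psi_{\mathbb{C}}=d\psi_++i\,d\psi_-$ is itself of type $(2,2)$, since $d\psi_+=0$ was proved in the course of the proof of Theorem \ref{modern_nK_conditions} and $d\psi_-=-2\mu\,\sigma\wedge\sigma$ by \eqref{psi_-_sigma_squared} and \eqref{first_nk_structure_equations}, with $\sigma$ of type $(1,1)$. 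Equating $(3,1)$-components yields $\zeta^{0,1}\wedge\psi_{\mathbb{C}}=0$; as wedging with $\psi_{\mathbb{C}}=2\sqrt{2}\,f^1\wedge f^2\wedge f^3$ is injective on $\Lambda^{0,1}$, this gives $\zeta^{0,1}=0$, and combined with $\overline{\zeta}=-\zeta$ it forces $\zeta=0$ (a $1$-form that is simultaneously of type $(1,0)$ and of type $(0,1)$ vanishes).

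Hence $\widehat{\nabla}\psi_{\mathbb{C}}=0$, which completes the proof. I expect the main obstacle to be the type bookkeeping of the previous paragraph: one must handle the exterior-derivative-with-torsion identity carefully and, crucially, verify that the torsion of $\widehat{\nabla}$ has no component sending a mixed pair in $T^{1,0}\otimes T^{0,1}$ to a nonzero vector — this is precisely what confines the torsion correction in $d\psi_{\mathbb{C}}$ to type $(2,2)$ and lets the $(3,1)$-part, which vanishes because $d\psi_+=0$, detect and kill $\zeta$.
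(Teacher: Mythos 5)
Your proof is correct, but it takes a genuinely different route from the paper's. The paper's own argument is a direct two-line computation: it expands $\widehat{\nabla}(\nabla\sigma)$ and observes that the correction terms coming from $-\tfrac{1}{2}J(\nabla J)$ are exactly the cyclic sum in formula \eqref{cyclic_sum_sigma}, so $\widehat{\nabla}(\nabla\sigma)=0$; since $\nabla\sigma=\mu\psi_+$ with $\mu$ a nonzero constant and $\psi_-=-J\psi_+$ with $\widehat{\nabla}J=0$, both $\psi_\pm$ are parallel. You instead work on the canonical line bundle: $\widehat{\nabla}J=0$ forces $\widehat{\nabla}\psi_{\mathbb{C}}=\zeta\otimes\psi_{\mathbb{C}}$ for a complex one-form $\zeta$, the parallel volume $\mathrm{vol}=\tfrac16\sigma^3$ makes $\zeta$ purely imaginary, and then the torsion-corrected formula for $d\psi_{\mathbb{C}}$ --- together with the type decomposition of the torsion $T=-N$ (which your Lemma \ref{other_equivalent_characterisations} computation correctly confines to $\Lambda^2T^{1,0}\to T^{0,1}$ and its conjugate) and the fact that $d\psi_{\mathbb{C}}=id\psi_-\in\Lambda^{2,2}$ by the structure equations --- kills $\zeta^{0,1}$, hence $\zeta$. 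Both arguments are sound and non-circular (Theorem \ref{modern_nK_conditions} and Remark \ref{hodge_star_relation_volume}, which you rely on, precede Proposition \ref{su3_connection} in the paper). What the paper's route buys is economy: it needs only the already-established identity \eqref{cyclic_sum_sigma} and never mentions torsion or types. What your route buys is conceptual clarity in the spirit of Chiossi--Salamon: it exhibits $\zeta$ as the connection form induced on $\Lambda^{3,0}$ and shows that the first-order equations on $(\sigma,\psi_\pm)$ leave no room for a $\mathfrak{u}(1)$-component of the intrinsic torsion, which is precisely the reduction from $\mathrm{U}(3)$ to $\mathrm{SU}(3)$. The one point to state explicitly if you write this up is the torsion-corrected exterior derivative formula you invoke, with its signs, since the whole type count hinges on it; the bookkeeping itself, as you present it, is correct.
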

\begin{proof}
We calculate $\widehat{\nabla}(\nabla\sigma)$. By \eqref{cyclic_sum_sigma} we have
\begin{align*}
\widehat{\nabla} (\nabla\sigma)(W,X,Y,Z) & = W(\nabla\sigma(X,Y,Z)) - \nabla\sigma(\widehat{\nabla}_WX,Y,Z) \\
& \qquad -\nabla\sigma(X,\widehat{\nabla}_W Y,Z) -\nabla \sigma(X,Y,\widehat{\nabla}_WZ) \\
& = \nabla^2\sigma(W,X,Y,Z)+\tfrac{1}{2}\cyclicsum_{ X,Y,Z} g((\nabla_WJ)X,(\nabla_YJ)JZ) = 0,
\end{align*}
which proves $\widehat{\nabla}(\nabla\sigma) = 0 = \widehat{\nabla}\psi_+$, thus $\psi_+$ is parallel. Further, by $\psi_- = -J\psi_+$ we have at once $\widehat{\nabla}\psi_- = 0$, namely $\widehat{\nabla}\psi_{\mathbb{C}}=0$, which proves $\widehat{\nabla}$ is actually an $\mathrm{SU}(3)$-connection. 
\end{proof}

\begin{remark}
\label{last_remark}
We mentioned already in Proposition \ref{first_equivalent_characterisations} that $\nabla\sigma$ lies in $[\![\Lambda^{3,0}]\!]$, so obviously it is only the $(3,0)+(0,3)$ part of $\nabla\sigma$ that measures the failure of $M$ to be K\"ahler. Therefore, we can say that it is exactly the type of $\nabla\sigma$ that determines the class of nearly K\"ahler manifolds in the classification completed by Gray and Hervella. On the other hand, $\nabla\sigma$ may be identified with $\nabla J$ (via $\nabla \sigma(X,Y,Z) = g((\nabla_X J)Y,Z)$), which may in turn be identified with the Nijenhuis tensor $N$ of $J$ by Proposition \ref{nijenhuis_nk}. The latter is the \emph{intrinsic torsion} of the $\mathrm{SU}(3)$-structure $(\sigma,\psi_{\pm})$ by Proposition~\ref{su3_connection}. A detailed study of this object for $\mathrm{SU}(3)$- and $\mathrm{G}_2$-structures was pursued by Chiossi and Salamon (see~\cite{chiossi_salamon}, in particular Theorem 1.1 for what regards our set-up).
\end{remark}

%\bibliographystyle{amsplain}
%\bibliography{bibliography}
\providecommand{\bysame}{\leavevmode\hbox to3em{\hrulefill}\thinspace}
\providecommand{\MR}{\relax\ifhmode\unskip\space\fi MR }
% \MRhref is called by the amsart/book/proc definition of \MR.
\providecommand{\MRhref}[2]{%
  \href{http://www.ams.org/mathscinet-getitem?mr=#1}{#2}
}
\providecommand{\href}[2]{#2}

(G.~Russo), Max Planck Institute for Mathematics, Vivatsgasse 7, 53111 Bonn, Germany.

\textit{E-mail address}: \texttt{giovanni.russo@math.au.dk}

\end{document}